\newtheorem{theorem}{Theorem}
\newtheorem*{theorem*}{Theorem}
\numberwithin{theorem}{section}
\newtheorem{definition}{Definition}[section]
\newtheorem{prop}{Proposition}[section]
\newtheorem{lemma}{Lemma}[section]
\newtheorem{remark}{Remark}[section]
\DeclareMathOperator{\supp}{supp}
\newtheorem{conjecture}{Conjecture}[section]
\newtheorem*{claim}{Claim}
\newcommand{\R}{\mathbb{R}}
\numberwithin{prop}{section}
\DeclareMathOperator{\cell}{cell}
\title[Curved Kakeya sets for generic phases in odd dimensions]{Curved Kakeya sets for generic phases\\
	in odd dimensions}
\author{Shaoming Guo}
\address{Chern Institute of Mathematics and LPMC, Nankai University, Tianjin 300071, PR China}
\email{Shaomingguo2018@gmail.com}
\author{Diankun Liu}
\address{School of Mathematical Sciences, Zhejiang University, Hangzhou 310027, PR China}
\email{liudiankun5@gmail.com}
\author{Yakun Xi}
\address{School of Mathematical Sciences, Zhejiang University, Hangzhou 310027, PR China}
\email{yakunxi@zju.edu.cn}
\begin{document}
	
	\begin{abstract}
		We show that for each odd integer $n\ge 3$, there is an open dense subset of H\"ormander phase functions in $\mathbb{R}^n$ for which the associated curved Kakeya sets have Hausdorff dimension at least $\frac{n+1}{2} + d_n$ for some positive $d_n$, thereby exceeding the classical compression threshold. In particular, in $\mathbb{R}^3$, generic H\"ormander phases induce curved Kakeya sets of dimension at least $2 + \tfrac17$. As an application, on a generic three-dimensional Riemannian manifold, a local Nikodym set has Hausdorff dimension at least $2 + \tfrac17$. We achieve these results by generalizing the finite contact order condition from \cite{DGGZ24}, originally developed in $\mathbb{R}^3$, to arbitrary dimensions. Our bounds are stronger than those of \cite{DGGZ24} even in $\mathbb{R}^3$, since we derive curved Kakeya estimates directly via the polynomial method. Moreover, for H\"ormander-type oscillatory integral operators with positive-definite phases of finite contact order, we obtain quantitative improvements in all odd dimensions over the bounds of \cite{GHI}, while in three dimensions our oscillatory integral estimate exactly matches the result of \cite{DGGZ24}.
	\end{abstract}
	
	\maketitle
	
	\section{Introduction}
	\subsection{Curved Kakeya sets}
	
	The curved Kakeya problem asks for lower bounds on the Hausdorff dimension of subsets of $\mathbb{R}^n$ that contain, in every ``direction'', a curve drawn from a family determined by a single phase function satisfying H\"ormander’s non-degeneracy condition.
	
	\begin{definition}
		[H\"{o}rmander phase function] Let $\phi\in C^\infty( B_{\varepsilon_0}^{n-1}\times B_{\varepsilon_0}^{1}\times B_{\varepsilon_0}^{n-1})$, where $0<\varepsilon_0\ll 1$ is fixed throughout the paper. We say that $\phi$ is a H\"ormander phase function in $\mathbb R^n$ if for every
		$(\mathbf{x},y)\in B_{\varepsilon_0}^{n-1}\times B_{\varepsilon_0}^{1}\times B_{\varepsilon_0}^{n-1}$ the following hold:
		\\$(H_{1})$ $${\rm rank\,} \nabla_{\mathbf{x}}\nabla_{y}\phi(\mathbf{x} ;y)=n-1.$$ 
		\\$(H_{2})$  With 
		$$ G_{0}(\mathbf{x};y):=\bigwedge_{j=1}^{n-1}\partial_{y_{j}}\nabla_{\mathbf{x}}\phi(\mathbf{x};y),$$
		one has 
		$$\det\bigl[\nabla_{y'}^{2}\langle \nabla_{\mathbf{x}}\phi(\mathbf{x};y'),G_{0}(\mathbf{x};y)\rangle\bigr]_{y'=y}\neq 0.$$
		We write $\mathbf{H}$ for the space of all such H\"ormander phase functions.
	\end{definition}
	
	By a standard argument in \cite{Hor73} and \cite{Bourgain91}, a H\"ormander phase can be reduced to the normal form
	\[
	\phi(\mathbf{x};y)
	= \langle x,y\rangle + t\,\langle y,Ay\rangle
	+ O\bigl(|\mathbf{x}|^2\,|y|^2 + |t|\,|y|^3\bigr),
	\]
	where $(\mathbf{x};y)=(x,t;y)\in B_{\varepsilon_{\phi}}^{n-1}\times B_{\varepsilon_{\phi}}^{1}\times B_{\varepsilon_{\phi}}^{n-1}$ for some $\varepsilon_\phi\in(0,\varepsilon_0]$, and $A$ is a non-degenerate matrix.

	Every H\"ormander phase function $\phi$ naturally induces a family of Kakeya‐type curves \cite{Bourgain91,Wisewell05}.  
	
	\begin{definition}[Kakeya Curve]\label{curved}
		Given a Hörmander phase $\phi$, and a constant $\varepsilon_\phi\in(0,\varepsilon_0]$. For $y\in B_{\varepsilon_\phi}^{n-1}$, $\mathbf{x}\in B_{\varepsilon_\phi}^{n-1}\times B_{\varepsilon_\phi}^{1}$, and $0<\delta<\varepsilon_\phi$, define the Kakeya curves associated with $(\phi,\varepsilon_\phi)$ by
		\[
		\Gamma_y^\phi(\mathbf{x})
		= \bigl\{\mathbf{x}'\in B_{\varepsilon_\phi}^{n-1}\times B_{\varepsilon_\phi}^{1}
		: \nabla_{y}\phi(\mathbf{x}';y)
		= \nabla_{y}\phi(\mathbf{x};y)\bigr\},
		\]
		and the associated $\delta$-tubes by
		\[
		T_y^{\delta,\phi}(\mathbf{x})
		= \bigl\{\mathbf{x}'\in B_{\varepsilon_\phi}^{n-1}\times B_{\varepsilon_\phi}^{1}
		: |\nabla_{y}\phi(\mathbf{x}';y)
		- \nabla_{y}\phi(\mathbf{x};y)| < \delta\bigr\}.
		\]
	\end{definition}
	Here, $y$ is called the “direction” of the curve $\Gamma_y^\phi(\mathbf x)$ and $\mathbf x$ its “position”; by selecting one Kakeya curve for each direction, we obtain a curved Kakeya set.
	
	\begin{definition}[Curved Kakeya set]\label{curvedK}
		Let $\phi$ be as above. A set $E\subset\mathbb R^n$ is a \emph{curved Kakeya set} associated with $(\phi,\varepsilon_\phi)$ if for each $y\in B_{\varepsilon_\phi}^{n-1}$ there exists a basepoint $\omega\in B_{\varepsilon_\phi}^{n-1}$ such that \[\Gamma_y^\phi((\omega,0))\subset E.\]
	\end{definition}

	\subsection{The Hausdorff dimension of generic curved Kakeya sets}
	
	Curved Kakeya sets extend the definition of classical straight‐line Kakeya sets, but their fractal dimensions typically differ from those of the standard straight‐line Kakeya sets.
	
	For general curved Kakeya sets, Wisewell \cite{Wisewell05} proved a Hausdorff‐dimension lower bound of $\tfrac{n+1}{2}$ in odd dimensions $n$, while Bourgain--Guth \cite{BG11} raised the lower bound to $\tfrac{n+2}{2}$ for even $n$. These bounds are generally sharp: there are explicit phase functions whose Kakeya sets concentrate on a $\lceil\tfrac{n+1}{2}\rceil$-dimensional submanifold---this concentration constitutes the Kakeya compression phenomenon. See \cite{Bourgain91,MS,BG11,Sogge-Xi,GHI}.
	
	Our main result asserts that, under the usual Fr\'echet topology of $C^\infty$ functions, there exists an open dense subset $\mathbf C \subset \mathbf H$ such that for every $\phi \in \mathbf C$, the associated curved Kakeya sets in $\mathbb R^n$ strictly exceed the Kakeya compression threshold $\tfrac{n+1}{2}$ for any odd $n$.
	
	\begin{theorem}\label{thm generic}
		For each odd integer $n\ge3$, there exists an open dense subset $\mathbf{C}\subset\mathbf{H}$ and an absolute constant $0<d_n<\frac12$ such that, for every $\phi\in\mathbf{C}$, there exists a constant $\varepsilon_{\phi}>0$ for which the curved Kakeya set $K\subset\mathbb{R}^n$ associated with $(\phi,\varepsilon_\phi)$ satisfies
		\[
		\dim_{\mathcal{H}}K \ge \frac{n+1}{2} + d_n.
		\]
		In particular, curved Kakeya sets in $\mathbb{R}^3$ induced by generic H\"ormander phases have Hausdorff dimension $\ge 2+\tfrac{1}{7}$.
		
	\end{theorem}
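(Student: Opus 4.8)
The plan is to reduce the Hausdorff dimension bound to a maximal-function (Kakeya-type) estimate for the $\delta$-tubes $T_y^{\delta,\phi}(\mathbf{x})$, and then to prove that estimate by the polynomial method after imposing a ``finite contact order'' condition that cuts out an open dense subset $\mathbf{C}\subset\mathbf{H}$. First I would normalize $\phi$ to the stated normal form $\phi(\mathbf{x};y)=\langle x,y\rangle+t\langle y,Ay\rangle+O(|\mathbf{x}|^2|y|^2+|t||y|^3)$ with $A$ non-degenerate, and observe that the Kakeya curves $\Gamma_y^\phi((\omega,0))$ are, to leading order, parabolas in the $t$-variable. The heart of the matter is the dichotomy: either the phase is ``degenerate'' in the sense that all the curves $\Gamma_y$ through basepoints on a fixed $(n-1)$-plane are forced to lie (essentially) in a $\tfrac{n+1}{2}$-dimensional manifold — the compression scenario — or the family of curves has a genuine transversality/curvature that a finite-order jet computation detects. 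I would formulate the \emph{finite contact order} condition generalizing \cite{DGGZ24}: the intersection multiplicity of any two distinct Kakeya curves, or of a curve with an algebraic hypersurface of controlled degree, is bounded by an absolute constant $C_n$. This is a condition on finitely many derivatives of $\phi$ at finitely many points, hence defines an open set; density follows because an arbitrarily small perturbation of the higher-order Taylor coefficients destroys any infinite-order contact (a Baire-category / transversality argument, using that the ``bad'' set is a countable union of positive-codimension subvarieties in jet space).

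Next I would set up the polynomial method for the Kakeya maximal estimate. Fix $\delta>0$ and a $\delta$-separated set of directions $y$, each with a chosen tube $T_y:=T_y^{\delta,\phi}((\omega_y,0))$. Suppose for contradiction the union $\bigcup_y T_y$ has measure $\lesssim\delta^{n-\alpha}$ for some $\alpha>\tfrac{n+1}{2}+d_n$. Using polynomial partitioning (Guth--Katz, in the curved form as in Guth's polynomial Wolff axiom work and \cite{DGGZ24}), I would find a polynomial $P$ of degree $D\sim\delta^{-\epsilon}$ whose zero set $Z(P)$ passes through many of the tubes in a balanced way; the cellular part is handled by induction on scales, so the dominant contribution is the ``wall'' case where $\gtrsim$ all tubes concentrate in the $\delta$-neighborhood of $Z(P)$. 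Here the finite contact order condition enters decisively: a Kakeya curve can be tangent to $Z(P)$ to order at most $C_n$, so it can spend length only $\lesssim \delta^{1/C_n}$ near $Z(P)$ unless it is essentially contained in a single irreducible component that is itself a low-degree curved hypersurface — and non-degeneracy $(H_2)$ plus the finite contact order rules out that too many distinct-direction curves lie in one such hypersurface. Counting tangencies via Bézout (curves vs. $Z(P)$) then forces the bushes of tubes to be spread out, yielding the improved exponent $\tfrac{n+1}{2}+d_n$ with an explicit $d_n$ coming from optimizing the degree $D$ against the contact order $C_n$; in $n=3$ the bookkeeping gives $d_3=\tfrac17$, matching \cite{DGGZ24}.

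Finally I would convert the maximal estimate into the Hausdorff dimension statement by the standard bootstrapping: if $K$ is a curved Kakeya set with $\dim_{\mathcal H}K<\tfrac{n+1}{2}+d_n$, cover $K$ by balls witnessing a Frostman measure with small exponent, pigeonhole to a single dyadic scale $\delta$, and derive a contradiction with the Kakeya maximal bound applied at that scale (summing over scales using the $\delta^{-\epsilon}$ losses, which are admissible since $d_n$ is a fixed constant $<\tfrac12$). The main obstacle, I expect, is the wall/tangency analysis in the polynomial partitioning step: one must show that finite contact order survives \emph{localization} — after rescaling a tube to unit scale, the relevant jets of $\phi$ still satisfy a uniform finite contact order bound — and that the induction on scales closes without the constant $d_n$ degrading to zero. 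Handling the ``curve inside a hypersurface'' degenerate component, and ensuring the genericity condition is strong enough to exclude it uniformly in the induction, is the technical crux; this is exactly where the generalization of \cite{DGGZ24} from $\mathbb{R}^3$ to odd $n$ requires the new multilinear/transversality input encoded in $(H_2)$.
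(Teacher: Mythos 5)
Your overall architecture (reduce to a curved maximal Kakeya estimate, run polynomial partitioning, get genericity from a positive-codimension condition in jet space) matches the paper, but the step that actually defeats compression is wrong as you state it. You claim that finite contact order forces a Kakeya curve to be tangent to $Z(P)$ to order at most an absolute constant $C_n$, so that it spends length $\lesssim\delta^{1/C_n}$ near the wall, with a B\'ezout count finishing the job. No such uniform tangency bound can hold against hypersurfaces of degree $D\sim\delta^{-\epsilon}$ (the admissible tangency order grows with $D$), and it is not what the contact order condition controls: in the algebraic/tangent case of the partitioning algorithm the tubes genuinely live in the $\delta$-neighborhood of the variety, and the gain in the paper is not that curves exit quickly but that the \emph{number of distinct directions} of tubes contained in a semialgebraic set $S$ is bounded by $\mathcal{L}^n(S)$ up to controlled factors --- a polynomial Wolff axiom. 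In the paper this is derived from a completely different formulation of contact order: along the curve one forms the Hessian $D(t)=\nabla_y^2\phi_0(\Phi,t;0)$ and requires the matrix $\mathcal{D}$ of the first $l$ $t$-derivatives of \emph{all minors' distinct terms} of $D(t)$ to have full row rank $A(n)$; expanding $\det(U+D(t))$ then shows $\int|\det(M+\nabla_y^2\phi(\Phi(v,t;y),t;y))|\,dt\gtrsim1$ for every matrix $M$, which is the input to the Guo--Wang--Zhang reduction giving the Wolff axiom. Crucially, the localization issue you flag but do not resolve is exactly where the quantitative content lies: after parabolic rescaling to scale $r_{n'}$ the determinant bound degrades to $\lambda_{n'}^{\,n-1-l}$, and the resulting two-sided bound $\min\{\mathcal{L}^n(S)\lambda_{n'}^{\,l-(n-1)},\ \mathcal{L}^n(\mathcal{N}_{1/\lambda_{n'}}(S))\}$, interpolated with the general estimate via a parameter $\gamma$, is what produces the explicit exponent $d_n=\frac{n-1}{2(A(n)+2-n)(n-1)+2}$. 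Without this quantified degradation your induction cannot produce any definite $d_n$.

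Two further concrete problems. First, your definition of the generic set (bounded intersection multiplicity of any two Kakeya curves, or of a curve with low-degree hypersurfaces) is not obviously open, is not a condition on finitely many jets at finitely many points as you assert, and is not the condition that feeds the partitioning argument; the paper's set $\mathbf{C}$ is cut out by the full-rank condition on the explicit $A(n)\times l$ jet matrix at the origin, with openness from rank stability and density from the fact that $\det\mathcal{D}$ is a nontrivial polynomial on the jet space $\mathbb{R}^N$, $N=\frac{n(n-1)}{2}A(n)$. Second, your numerology is off: $d_3=\tfrac17$ does not ``match \cite{DGGZ24}'' --- that paper obtains $2+\tfrac{1}{13}$ by passing through oscillatory integral estimates; the bound $2+\tfrac17$ requires running the polynomial method directly on the curved maximal Kakeya inequality with $l=A(3)=4$, and computing $A(n)$ (a combinatorial count of distinct terms in determinants of minors of a symmetric matrix) is itself part of the proof you would need to supply.
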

	
	It is curious to note that, in $\mathbb{R}^3$, the lower bound derived from Theorem \ref{thm generic} coincides with the lower bound of $\frac{4n+3}{7}=\frac{15}7$ established by Katz and Tao \cite{Katz-Tao} for standard Kakeya sets.
	
	We prove Theorem \ref{thm generic} by generalizing the notion of \emph{contact order} for H{\"o}rmander phase functions from $\mathbb{R}^3$ to arbitrary dimension $n\ge3$; see Section \ref{contact order} for detailed definitions. In the proof, we choose $\mathbf{C}$ to be the family of phase functions with the minimal required contact order $A(n)$. For the exact value of $A(n)$, see \eqref{eq An} in the appendix. The concept of contact order in $\mathbb{R}^3$ was introduced by Bourgain \cite{Bourgain91} and further developed in \cite{Sogge99,DGGZ24,CGGHIW24}.

	Under the finite contact order assumption, we show that the associated tube families satisfy certain non-trivial polynomial Wolff axioms, which counteract the Kakeya compression phenomenon. Moreover, given an upper bound $l$ on the contact order, we obtain quantitative improvements to the Hausdorff‐dimension lower bounds for corresponding curved Kakeya sets.
	
	\begin{theorem}\label{thm Kakeya}
		Let $n\ge3$ be odd, and let $\phi$ satisfy $(H_1)$ and $(H_2)$ and have contact order at most $l$ at the origin in $\mathbb{R}^n$ for some integer $l\geq A(n)$. Then there exists a constant $\varepsilon_{\phi}>0$ such that
		the curved Kakeya sets associated with $(\phi,\varepsilon_\phi)$ satisfy
		\[
		\dim_{\mathcal{H}}K \ge \frac{n+1}{2} + \frac{n-1}{2(l+2-n)(n-1)+2},
		\]
		where $\dim_{\mathcal{H}}K$ denotes the Hausdorff dimension of the curved Kakeya set $K$.
	\end{theorem}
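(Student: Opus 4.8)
The plan is to prove Theorem \ref{thm Kakeya} using the polynomial method, following the Dvir--Katz--Tao--Guth paradigm adapted to the curved setting, with the finite contact order hypothesis supplying the crucial quantitative input. Suppose for contradiction that the curved Kakeya set $K$ has Hausdorff dimension strictly below $\frac{n+1}{2} + \frac{n-1}{2(l+2-n)(n-1)+2}$. By a standard discretization, we pass to a $\delta$-separated collection of roughly $\delta^{1-n}$ Kakeya curves $\Gamma_{y}^{\phi}$, one for each $\delta$-separated direction $y$, whose $\delta$-neighborhoods are the curved tubes $T_{y}^{\delta,\phi}$; the dimension bound on $K$ translates into the statement that the union of these tubes is contained in a set of measure roughly $\delta^{n - \alpha}$ for the target exponent $\alpha$. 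The goal is then a Kakeya maximal-type inequality: the tubes cannot be so compressed.

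The first main step is to invoke the polynomial method: if the union of the tubes had small measure, one finds a nonzero polynomial $P$ of controlled degree $D \sim \delta^{-(n-\alpha)/\text{(something)}}$ vanishing on a substantial portion of each tube. Here the curved geometry forces us to work not with the vanishing of $P$ on lines but with its vanishing along the curves $\Gamma_y^\phi$; since each such curve is an algebraic (or, after the normal-form reduction, real-analytically parametrized) curve of bounded complexity, $P$ restricted to a tube is a univariate function whose zero set is controlled by $D$. Thus each tube either lies in the $\delta$-neighborhood of $Z(P)$ or else $P$ changes sign, and a pigeonholing argument shows most of the mass of most tubes is trapped near $Z(P)$. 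The second step is the structural analysis of $Z(P)$: one localizes to a ball where $Z(P)$ looks like a union of smooth hypersurfaces (or lower-dimensional varieties after iterating), and here the \emph{finite contact order} condition enters decisively. The contact order at most $l$ bounds the order of tangency between the Kakeya curves and any fixed variety of bounded degree — equivalently, a curve $\Gamma_y^\phi$ that stays $\delta$-close to a degree-$D$ hypersurface for a length-scale comparable to $1$ must do so only for directions $y$ in a set of measure $\lesssim \delta^{c}$ with $c$ depending on $l$ and $n$; this is precisely the polynomial Wolff axiom alluded to in the paragraph preceding the theorem. Quantitatively, the contact order $l$ controls how many directions' worth of curves can be simultaneously tangent to $Z(P)$ to order forcing confinement, and this is what produces the exponent $\frac{n-1}{2(l+2-n)(n-1)+2}$.

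The third step is to run the standard bush/hairbrush or iterated-polynomial-partitioning induction-on-scales: combining the count of directions trapped near $Z(P)$ (bounded via finite contact order) with the count of directions whose curves must instead cross $Z(P)$ transversally (bounded via Bézout/degree considerations, since a transversal curve meets $Z(P)$ in $\lesssim D$ points and the tubes are $\delta$-separated), one obtains an inequality of the form $\delta^{1-n} \lesssim (\text{measure})^{a} \delta^{-b}$ in which solving for the measure yields the desired lower bound on $\dim_{\mathcal H} K$. The hypothesis $l \ge A(n)$ is what guarantees this inequality is non-vacuous, i.e. that the resulting exponent genuinely exceeds $\frac{n+1}{2}$ and the argument closes rather than merely recovering Wisewell's bound. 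The main obstacle I anticipate is the passage from the abstract contact-order bound to a usable polynomial Wolff axiom for curves: unlike the straight-line case, one cannot directly apply Wongkew-type volume bounds for neighborhoods of varieties, and one must carefully track how the $O(|\mathbf x|^2|y|^2 + |t||y|^3)$ error terms in the normal form interact with the curvature of $Z(P)$ across all scales between $\delta$ and $1$. Making the contact-order estimate uniform over the degree $D$ of $P$ — which grows as $\delta \to 0$ — and over the choice of localizing ball is the technical heart of the argument, and is where the dimension-$n$ generalization of the $\mathbb{R}^3$ machinery of \cite{DGGZ24} requires genuinely new work.
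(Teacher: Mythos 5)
Your proposal correctly identifies the key input — finite contact order yields a polynomial Wolff axiom for the curved tubes — but the architecture you build around it has a genuine gap, in two places. First, your Step 1 is a Dvir/Guth-style vanishing-polynomial argument: find $P$ of degree $D$ vanishing on a large portion of each tube, then count zeros of $P$ along each curve via "Bézout/degree considerations." This fails in the curved setting, because the Kakeya curves $\Gamma_y^\phi$ are defined implicitly by $\nabla_y\phi(\mathbf{x}';y)=\mathrm{const}$ for a merely smooth (H\"ormander) phase; they are not algebraic curves of bounded complexity, so $P$ restricted to such a curve is not a polynomial and its zero set is not controlled by $D$. The paper never counts zeros along curves at all: it works with the dual maximal formulation $\|\sum_T \chi_T\|_{L^p}$, reduces to a $k$-broad estimate with $k=\lceil (n+1)/2\rceil$ via the Bourgain--Guth decomposition (Proposition \ref{prop KKYImply}), and then runs the Hickman--Rogers--Zhang polynomial partitioning algorithm (Propositions \ref{prop KKYproperty1}--\ref{prop KKYproperty3}), in which polynomials only enter through cell decompositions and neighborhoods of transverse complete intersections, and the contact-order input enters through a containment count of tubes in semi-algebraic sets, not through zero-counting on curves. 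The broad-narrow reduction is not cosmetic: it is precisely why the gain appears only for odd $n$ and why the final exponent has the form $p\ge \frac{(l+2-n)(n-1)k+n}{(l+2-n)(n-1)(k-1)+n-1}$.

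Second, you flag "uniformity over the degree $D$ of $P$" as the technical heart, but the actual difficulty — and the source of the specific exponent $\frac{n-1}{2(l+2-n)(n-1)+2}$ — is different: the contact-order gain degrades under parabolic rescaling. After rescaling a tangential piece at scale $r_{n'}$ to unit length, the rescaled phase $\phi_{\lambda_{n'}}$ no longer satisfies the full-strength Wolff axiom; Proposition \ref{prop scalingPWA} only gives the minimum of two bounds, $\mathcal{L}^n(S)\,\lambda_{n'}^{l-(n-1)}$ (a loss growing with $\lambda_{n'}$, coming from $\det(M+\nabla_y^2\phi_{\lambda_{n'}})\gtrsim \lambda_{n'}^{\,n-1-l}$) and a neighborhood bound obtained by comparing with a translation-invariant model phase. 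The paper then interpolates: it takes a geometric average, with an optimized weight $\gamma$, of the contact-order-improved tube count \eqref{eq KKYT2} against the trivial count \eqref{eq KKYT1}, and only this averaging, fed through the partitioning iteration and the choice $\iota_m=\cdots=\iota_{n-1}=m(1-\frac1p)$, produces the stated exponent and explains why the improvement vanishes as $l\to\infty$. Your sketch contains no mechanism that would generate this interpolation or the resulting numerology, so as written it could at best reproduce a qualitative statement, not the theorem's quantitative bound.
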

	
	It is noteworthy that as $l\to\infty$, the improvement vanishes and the Kakeya compression phenomenon may occur. In the proof of Theorem \ref{thm generic}, we take $l = A(n)$ and
	\[
	d_n \coloneqq \frac{n-1}{2(A(n)+2-n)(n-1)+2},
	\]
	where $A(n)$ is the minimal required contact order. In $\mathbb{R}^3$, Dai, Gong, Guo, and Zhang \cite{DGGZ24} show that H{\"o}rmander phases with contact order $4=A(3)$ induce curved Kakeya sets of Hausdorff dimension at least $2+\tfrac{1}{13}$. Rather than studying the associated oscillatory integral operator and deducing Kakeya estimates as in \cite{DGGZ24}, we employ the polynomial method to derive curved maximal Kakeya estimates directly, obtaining the stronger lower bound $2+\tfrac{1}{7}$. As an application of Theorem \ref{thm Kakeya}, we derive corresponding results for Nikodym sets on Riemannian manifolds; see Theorem \ref{thm Manifold}. We show that, on a generic three‑dimensional Riemannian manifold, every Nikodym set has Hausdorff dimension at least $2+\frac17$.

	\subsection{H\"ormander's oscillatory integral operators}
	
	The Hausdorff dimension of curved Kakeya sets is closely tied to $L^p$-bounds for H\"ormander's oscillatory integral operators. 
	\begin{definition}[H\"ormander oscillatory integral operator]
		Let $\phi$ be a H\"ormander phase function and fix $\varepsilon_\phi\in(0,\varepsilon_0]$. Let
		$a\in C_c^\infty\bigl(B^{n-1}_{\varepsilon_\phi}\times B^{1}_{\varepsilon_\phi}\times B^{n-1}_{\varepsilon_\phi}\bigr)$
		be the amplitude.
		For $\lambda>1$ and $(x,t)\in\R^{n-1}\times\R$, define
		\[
		T_\lambda^\phi f(x,t)
		=\int_{\R^{n-1}}e^{i\lambda\phi(x,t;y)}\,a(x,t;y)\,f(y)\,dy.
		\]
	\end{definition}
	
	In the study of H\"ormander oscillatory integral operators, one often imposes a positive‐definite condition on the H\"ormander phases.
	
	\begin{definition}[Positive definiteness condition]
		We say that the phase function $\phi$ satisfies condition $(H_2^+)$ if the matrix 
		\[\nabla_{y'}^{2}\langle\nabla_{\mathbf{x}}\phi(\mathbf{x};y'),\,G_{0}(\mathbf{x};y)\rangle\big|_{y'=y}\]
		is positive‐definite for every 
		$(\mathbf{x},y)\in B_{\varepsilon_0}^{n-1}\times B_{\varepsilon_0}^{1}\times B_{\varepsilon_0}^{n-1}$.
	\end{definition}
	
	It is known that the Kakeya compression phenomenon can still occur for positive‐definite H\"ormander phases. Specifically, there exist positive-definite phases for which the corresponding curved Kakeya sets concentrate on a $\lceil\frac{n+1}{2} \rceil$-submanifolds \cite{MS, BG11,GHI, Sogge-Xi}. We write $\mathbf H^+$ for the space of positive‐definite H\"ormander phase functions.
	
	A key problem for H\"ormander oscillatory integral operators is to determine the maximal range of $p$ such that, for every $\epsilon>0$, there is a constant $C_{\epsilon}>0$ so that
	\begin{equation}\label{eq restriction}
		\|T_{\lambda}^{\phi}f\|_{L^{p}( B_{\varepsilon_\phi}^{n-1}\times  B_{\varepsilon_\phi}^{1})}
		\le
		C_{\epsilon}\,\lambda^{-\frac{n}{p}+\epsilon}\,\|f\|_{L^{p}( B_{\varepsilon_\phi}^{n-1})}
		\quad\text{for all }\lambda\ge1.
	\end{equation}
	Let us recall the generally sharp $L^p$ estimates established in \cite{GHI} for H\"ormander oscillatory integral operators with positive-definite phases.
	
	\begin{theorem}[\cite{GHI}]\label{thm GHI}
		Suppose the phase function $\phi$ satisfies $(H_1)$ and $(H_2^+)$. Set $\varepsilon_\phi=\varepsilon_0$. Then for every $\epsilon>0$ there is a constant $C_{\epsilon}>0$ such that the estimate \eqref{eq restriction} holds for all $\lambda\ge1$, provided $p$ lies in the following range:
		\begin{itemize}
			\item If $n\ge3$ is odd, then 
			\[
			p\ge p(n):=2\frac{3n+1}{3n-3};
			\]
			\item If $n\ge4$ is even, then 
			\[
			p\ge p(n):=2\frac{3n+2}{3n-2}.
			\]
		\end{itemize}
		Moreover, these bounds on $p$ are in general sharp.
	\end{theorem}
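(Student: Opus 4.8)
The plan is to prove \eqref{eq restriction} by Guth's polynomial partitioning method adapted to variable-coefficient operators, organized as a simultaneous induction on the spatial scale and on the number of wave packets, together with an induction on the ambient dimension $n$. After a parabolic rescaling $y\mapsto\lambda^{-1/2}y$, $(x,t)\mapsto\lambda^{-1}(x,t)$ and the normal-form reduction quoted in the excerpt, it suffices to prove a local estimate $\|T^{\phi}g\|_{L^{p}(B_{R})}\lesssim_{\epsilon}R^{\epsilon}\|g\|_{L^{\infty}}$ on balls $B_{R}\subset\mathbb{R}^{n}$ of radius $R\sim\lambda$, for operators whose phase is $\langle x,y\rangle+t\langle y,Ay\rangle$ plus the controlled error $O(|\mathbf{x}|^{2}|y|^{2}+|t||y|^{3})$, with $A$ positive-definite by $(H_{2}^{+})$. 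One then decomposes $g=\sum_{\theta}g_{\theta}$ over $R^{-1/2}$-caps $\theta$ in $y$; up to rapidly decaying tails, $T^{\phi}g_{\theta}$ is concentrated on a curved $R^{1/2}\times\cdots\times R^{1/2}\times R$ slab swept out by the Kakeya curves $\Gamma^{\phi}_{y}$, $y\in\theta$, and a further decomposition into individual tubes $T$ gives $g=\sum_{T}g_{T}$. The inductive quantity records the number of such tubes as well as $R$.

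Next I would apply polynomial partitioning at degree $D$ (a large constant depending on $\epsilon$): there is a polynomial $P$, $\deg P=O(D)$, whose zero set $Z(P)$ cuts $B_{R}$ into $\sim D^{n}$ cells, each carrying a $\sim D^{-n}$ fraction of $\int_{B_{R}}|T^{\phi}g|^{p}$. Either a positive fraction of this $L^p$-mass sits in the cells, or it sits in the $R^{1/2+\delta}$-neighborhood of $Z(P)$. In the cellular case, B\'ezout's theorem shows each tube $T$ meets $\lesssim D$ cells, so a typical cell sees only a $\lesssim D^{1-n}$ fraction of the tubes; applying the induction hypothesis cell by cell with this reduced tube count and summing defeats the $D^{n}$ loss once $D$ is large — the standard cellular gain.

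In the algebraic case I would cover $N_{R^{1/2+\delta}}(Z(P))$ by $R^{1/2+\delta}$-balls $B$ and, on each $B$, split the tubes meeting $B$ into those \emph{tangent} to $Z(P)$ (angle $\lesssim R^{-1/2+\delta}$ with the tangent space, and lying within $R^{1/2+\delta}$ of $Z(P)\cap B$) and those \emph{transverse} to it. The transverse tubes through a fixed $B$ are confined to a lower-dimensional family — quantitatively a polynomial-Wolff-type bound: since $\deg P=O(D)$, a transverse tube meets the $R^{1/2+\delta}$-neighborhood of $Z(P)$ in $O(D)$ balls — so a C\'ordoba $L^{2}$/bush argument (or a multilinear oscillatory estimate) disposes of the transverse part with an admissible power of $D$. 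The tangential tubes are the heart of the matter: on each $B$ the operator restricted to the $\le(n-1)$-dimensional variety $Z(P)\cap B$ behaves, after rescaling, like a H\"ormander operator in one fewer dimension, and here $(H_{2}^{+})$ is decisive, since positive-definiteness of the Hessian is inherited by its restriction to $T_{x}Z(P)$; thus the lower-dimensional operator again satisfies the hypotheses and one may induct on $n$. Solving the recursion for the number of wave packets across this dimensional induction yields the exponent $p(n)$, the parity of $n$ entering through the value at which the recursion terminates.

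The main obstacle is precisely this tangential/transverse dichotomy together with the dimensional induction: one must bookkeep the two-parameter induction so that the $R^{\epsilon}$- and $D$-losses accumulated over the $O(\log\log R)$ iterations stay under control, prove the transverse polynomial-Wolff estimate with exactly the right power of $D$, and check that under the normal-form reduction the variable-coefficient errors $O(|\mathbf{x}|^{2}|y|^{2}+|t||y|^{3})$ remain negligible at every scale of the iteration so that each restricted operator is genuinely of H\"ormander type. Sharpness of the ranges is then seen by exhibiting Kakeya-compression phases whose curved Kakeya sets concentrate on $\lceil\tfrac{n+1}{2}\rceil$-dimensional submanifolds, as in \cite{MS,BG11}.
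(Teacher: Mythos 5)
The paper does not prove this statement at all: it is quoted verbatim from \cite{GHI}, so the only comparison available is with the actual Guth--Hickman--Iliopoulou argument, and there your sketch has a genuine gap at its central step. Your treatment of the tangential case --- ``on each $B$ the operator restricted to $Z(P)\cap B$ behaves, after rescaling, like a H\"ormander operator in one fewer dimension, since positive-definiteness is inherited by the restriction of the Hessian, so one may induct on $n$'' --- is not how the proof goes and, as stated, does not work. The condition $(H_2^+)$ concerns the $y$-Hessian of $\langle\nabla_{\mathbf x}\phi,G_0\rangle$, not a quadratic form on $T_{\mathbf x}Z(P)$, and there is no reduction in which wave packets tangent to an arbitrary low-degree variety assemble into a lower-dimensional operator satisfying $(H_1)$, $(H_2^+)$ to which the theorem itself can be applied. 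Indeed the Kakeya compression phenomenon (the very examples you invoke for sharpness, concentrating on $\lceil\tfrac{n+1}{2}\rceil$-dimensional submanifolds) shows that tangential concentration is the genuine enemy and cannot be dismissed by such an induction; if it could, one would obtain exponents better than $p(n)$, contradicting the sharpness you assert in the same breath.

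What \cite{GHI} actually do is quite different in mechanism: they prove $k$-broad estimates for $\|T^\lambda f\|_{BL^p_{k,A}}$ by an iterated polynomial partitioning algorithm over transverse complete intersections of decreasing dimension, where the tangential contributions are controlled not by a dimensional induction on the operator but by \emph{transverse equidistribution} estimates for wave packets tangent to a variety (this is where $(H_2^+)$ enters, via the reflexive/nested tube structure), combined with $L^2$-orthogonality bookkeeping across scales $r_{n'}$ and degrees $D_{n'}$ --- exactly the structure the present paper recalls in Propositions \ref{prop Property 1} and \ref{prop Property 3}. The sharp linear exponents, including the parity dependence, then come from the Bourgain--Guth broad--narrow reduction (Proposition 11.1 of \cite{GHI}, invoked in Section \ref{reduction} here): the narrow part is handled by lower-dimensional decoupling and parabolic rescaling, and $p(n)$ arises from optimizing the admissible $k$ in the constraint \eqref{eq p} against the broad exponent $2\tfrac{n+k}{n+k-2}$. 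Your sketch omits the broad--narrow/decoupling step entirely and replaces transverse equidistribution by the unjustified lower-dimensional induction, so the recursion you propose cannot be closed to yield $p(n)=2\tfrac{3n+1}{3n-3}$ (odd $n$) and $2\tfrac{3n+2}{3n-2}$ (even $n$). The sharpness discussion via \cite{MS,BG11} is fine.
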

	
	The Kakeya compression phenomenon ensures the sharpness of the exponent range $p$ in these estimates: there exist H{\"o}rmander phases $\phi\in\mathbf H^+$ for which \eqref{eq restriction} fails whenever $p<p(n)$. However, we show that \eqref{eq restriction} admits quantitative improvements in odd dimensions when $\phi$ is restricted to the generic subset $\mathbf C^+:=\mathbf C\cap \mathbf H^{+}\subset\mathbf H^+$.
	
	\begin{theorem}\label{thm Horgeneric}
		Let $n\ge 3$ be an odd integer. There exists an absolute constant $\zeta_n>0$ such that for every $\phi\in\mathbf C^+$ there exists a constant $\varepsilon_{\phi}\in(0,\varepsilon_0]$ with the following property: for any $\epsilon>0$ there is a constant $C_\epsilon>0$ such that \eqref{eq restriction} holds whenever
		\[
		p\ge 2\frac{3n+1}{3n-3}-\zeta_n.
		\]
	\end{theorem}

	We prove this by establishing oscillatory integral estimates for H{\"o}rmander phases of contact order $l$ in arbitrary odd dimensions. In the proof of Theorem \ref{thm Horgeneric}, we take $l=A(n)$ and 
	$$\zeta_{n}\coloneqq \frac{4}{(3n-3)^2\,A(n)+(3n-3)^2(2-n)+2(3n-3)},$$ where $A(n)$ is the minimal required contact order.
	
	\begin{theorem}\label{thm improvement}
		Let $n\ge 3$ be odd, and suppose $\phi$ satisfies $(H_1)$ and $(H_2^+)$ and has contact order at most $l$ at the origin in $\mathbb{R}^n$ for some integer $l\geq A(n)$. There exists a constant $\varepsilon_{\phi}\in(0,\varepsilon_0]$ with the following property: for every $\epsilon>0$ there is a constant $C_\epsilon>0$ such that the estimate \eqref{eq restriction} holds whenever
		\[
		p\ge p(n,l)
		:=2\frac{3n+1}{3n-3}
		-\frac{4}{(3n-3)^2\,l+(3n-3)^2(2-n)+2(3n-3)}.
		\]
	\end{theorem}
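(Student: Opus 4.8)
\textbf{The plan} is to re-run the polynomial-partitioning argument for H\"ormander oscillatory integral operators of \cite{GHI}, but to feed into it a quantitatively stronger Kakeya/Wolff-type input that becomes available once the contact order is bounded by $l$. The gain this provides propagates through the double induction (on the radius $\lambda$ and on the dimension of an algebraic variety) and is exactly what separates the exponent $p(n,l)$ from the sharp-in-general exponent $p(n)$ of Theorem \ref{thm GHI}. First I would carry out the standard $\lambda^{-1/2}$-parabolic rescaling and spatial localization, reducing \eqref{eq restriction} to a local estimate on a ball $B_\lambda$ of radius $\lambda$, and decompose $T_\lambda^\phi f=\sum_\theta T_\lambda^\phi f_\theta$ into wave packets: each $f_\theta$ is localized to a $\lambda^{-1/2}$-cap in $y$ and to a dual $\lambda^{1/2}$-plank in position, so that $T_\lambda^\phi f_\theta$ concentrates, up to rapidly decaying tails, on a curved $\lambda^{1/2}$-tube --- the $\lambda^{1/2}$-dilate of a Kakeya curve $\Gamma_y^\phi(\mathbf x)$ of Definition \ref{curved}. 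One then sets up the induction quantity of \cite{GHI}, tracking the best constant in \eqref{eq restriction} over all phases obeying $(H_1)$, $(H_2^+)$ with contact order $\le l$, all admissible amplitudes, and all $f$.

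\textbf{Next}, following \cite{GHI}, a polynomial of controlled degree partitions $B_\lambda$ and reduces matters to three cases: a cellular case, closed by induction on the radius; a transverse case, controlled by the multilinear H\"ormander estimate together with a count of tubes passing through a point; and an algebraic (tangential) case, in which the wave packets that matter lie within $O(\lambda^{1/2+\delta})$ of the zero set $Z$ of a bounded-degree polynomial and one recurses on $m=\dim Z$, invoking the transverse-equidistribution estimate of \cite{GHI} (a second-order property tied to $(H_2^+)$, hence unaffected by the contact-order hypothesis). The worst case throughout is $m=\tfrac{n+1}{2}$; here the general-phase bound $p(n)$ is sharp precisely because of the Kakeya compression examples, and the point is that those examples have \emph{infinite} contact order.

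\textbf{The new ingredient} --- and the only place the hypothesis does work --- is the estimate, in the transverse and algebraic cases, for a family of curved $\lambda^{1/2}$-tubes contained in the $\rho$-neighborhood (with $\lambda^{1/2}\le\rho\le\lambda$) of a variety of dimension $m\le\tfrac{n+1}{2}$. The key lemma I would prove is that, when $\phi$ has contact order $\le l$, the number of $\lambda^{-1/2}$-separated directions $y$ for which $\Gamma_y^\phi(\mathbf x)$ is trapped in such a neighborhood improves over the contact-order-$\infty$ count of \cite{GHI} by a power $(\lambda^{-1/2}\rho)^{-\beta(n,l)}$ for some $\beta(n,l)>0$ determined by $l$. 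This is the oscillatory-integral analogue of the polynomial Wolff axioms underlying Theorem \ref{thm Kakeya}, and I would prove it by the same Taylor-expansion mechanism used there: a curve $\{\mathbf x':\nabla_y\phi(\mathbf x';y)=c\}$ whose graph stays inside a thin algebraic slab would osculate that slab to order larger than $l$, contradicting the bound on the contact order together with the normal form and $(H_1)$. The delicate point is that the contact-order bound must be shown to persist under the affine rescaling in the directions normal to $Z$ that the \cite{GHI} induction performs when it passes from $B_\lambda$ to a neighborhood of $Z$, so that the lemma can be reapplied at each of the finitely many dimension-reduction steps; this is where I expect most of the work to lie.

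\textbf{Finally}, I would substitute the gain from the previous step into the transverse/algebraic branch of the \cite{GHI} recursion and solve the resulting inequality for the induction quantity, summing the geometric series in the radius induction and absorbing the $\epsilon$-losses. The worst case $m=\tfrac{n+1}{2}$ then produces exactly
\[
p(n,l)=2\frac{3n+1}{3n-3}-\frac{4}{(3n-3)^2 l+(3n-3)^2(2-n)+2(3n-3)};
\]
for $n=3$ and $l=4$ this equals $\tfrac{33}{10}$, matching \cite{DGGZ24}, while letting $l\to\infty$ recovers the exponent $p(n)$ of Theorem \ref{thm GHI}. The main obstacle is thus the finite-contact-order Wolff axiom of the third step --- pinning down the exponent $\beta(n,l)$ and, above all, establishing it in a form stable under the normal rescalings of the polynomial-partitioning induction; by contrast the final bookkeeping, though lengthy, is a routine adaptation of \cite{GHI}.
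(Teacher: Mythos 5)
Your overall strategy coincides with the paper's: reduce \eqref{eq restriction} to a broad estimate (the paper uses the $k$-broad norm with $k=\frac{n+1}{2}$, via the argument of Proposition 11.1 of \cite{GHI}, arriving at Proposition \ref{prop Broad}), run a polynomial-partitioning recursion, and inject a contact-order-based, polynomial-Wolff-type count of curved tubes trapped in neighborhoods of low-dimensional varieties, with the exponent $p(n,l)$ emerging from the bookkeeping; your sanity checks ($p(3,4)=\tfrac{33}{10}$, the limit $l\to\infty$) are also consistent with the paper.

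However, the step you flag as the main open point --- showing that the contact-order bound ``persists under the affine rescaling'' when the induction passes to a neighborhood of a variety --- is precisely where a persistence statement would fail, and the paper resolves it differently. Under the rescaling to scale $r_{n'}$ the relevant determinant bound degrades: for the rescaled phase $\phi_{\lambda_{n'}}$, with $\lambda_{n'}=\lambda/r_{n'}$, one only gets $\det\bigl(M+\nabla_y^2\phi_{\lambda_{n'}}\bigr)\gtrsim\lambda_{n'}^{\,n-1-l}$, so the Wolff-type tube count carries an explicit loss $\lambda_{n'}^{\,l-(n-1)}$ rather than surviving unchanged. Proposition \ref{prop scalingPWA} compensates by pairing this lossy bound with a second one, obtained by approximating $\phi_{\lambda_{n'}}$ to within $O(1/\lambda_{n'})$ by the translation-invariant normal form $\tilde\phi_{\lambda_{n'}}$ (for which the standard polynomial Wolff axiom holds) and counting tubes in the $1/\lambda_{n'}$-neighborhood of the semialgebraic set; the minimum of the two bounds yields Lemma \ref{lem improvement} and \eqref{eq improvement}, and a geometric average (the parameter $\gamma$ in \eqref{eq average}) against the unconditional estimate of Proposition \ref{prop Property 3} is what, after solving the critical equations for $\gamma$ and $p$, produces exactly $p(n,l)$. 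So your plan is structurally right, but the key lemma must be formulated as this scale-dependent, two-sided estimate rather than as stability of the contact order under rescaling; as stated, the persistence you hope for is not available, and without the quantified loss together with the complementary neighborhood bound the claimed numerology cannot be derived.
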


	When $n=3$, Theorem \ref{thm improvement} recovers the result of \cite{DGGZ24}. In arbitrary odd dimensions, it provides a quantitative refinement of the Guth--Hickman--Iliopoulou exponent $p(n)$. Moreover, the gain decreases as the contact order $l$ increases and vanishes as $l\to\infty$ as expected.

	\subsection{History and discussion}
	
	We revisit the history of the Kakeya problem and situate our results within the literature. We begin by recalling the standard Kakeya set in $\R^n$.
	
	\begin{definition}[Kakeya set]
		A compact set $E\subset\R^n$ is a Kakeya set if for every direction $e\in S^{n-1}$ there exists $x\in\R^n$ such that
		\[
		\{x + te : t \in [-\tfrac12, \tfrac12]\}\subset E.
		\]
	\end{definition}
	
	This configuration arises from the phase function 
	\begin{equation}\label{eq standard}
		\phi(x,t;y)=\langle x,y\rangle + t\,h(y),
	\end{equation}
	where $\nabla^2h(y)$ is nondegenerate. Since $h$ is independent of $(x,t)$, the associated Kakeya curves are straight lines. The famous Kakeya conjecture asserts that every standard Kakeya set in $\R^n$ has full Hausdorff dimension.
	
	\begin{conjecture}\label{Conj}
		Every Kakeya set in $\R^n$ has Hausdorff dimension $n$.
	\end{conjecture}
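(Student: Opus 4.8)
The statement above is Conjecture \ref{Conj}, the \emph{Kakeya conjecture}, which is open for every $n\ge 4$ and was established in $\R^3$ only very recently by Wang and Zahl; accordingly, what follows is a blueprint of the natural line of attack rather than a complete argument. The plan is to reduce the Hausdorff-dimension lower bound to a $\delta$-discretized tube-incidence inequality. If some Kakeya set $E\subset\R^n$ had $\dim_{\mathcal H}E=n-\kappa$ with $\kappa>0$, then covering $E$ by $\delta$-cubes and pigeonholing would produce, for arbitrarily small $\delta$, a family of $\sim\delta^{1-n}$ tubes $T_\omega$ of dimensions $\delta\times\cdots\times\delta\times 1$ pointing in $\delta$-separated directions, whose union has Lebesgue measure $\lesssim\delta^{\kappa-\epsilon}$; since the total tube volume is $\sim 1$, the tubes have average multiplicity $\gtrsim\delta^{-\kappa+\epsilon}$ on a set of size $\gtrsim\delta^{\epsilon}$, and the goal is to contradict this (handling the gap between box and Hausdorff dimension by working simultaneously at all scales). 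First I would record the elementary bounds: the bush argument of Bourgain gives $\dim_{\mathcal H}E\ge\tfrac{n+1}{2}$ and the hairbrush argument of Wolff gives $\ge\tfrac{n+2}{2}$; then the arithmetic/sum--difference method of Bourgain and of Katz and Tao converts the tube overlap into an additive-energy estimate for the two-parameter ``slopes,'' yielding $\dim_{\mathcal H}E\ge\tfrac{4n+3}{7}$ in general, and $\tfrac{15}{7}$ when $n=3$ \cite{Katz-Tao}.

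The substance must come from the polynomial method, organized around a structural dichotomy run multiscale: at each scale either the tubes are spread out, in which case a polynomial partition of controlled degree (Guth--Katz) decomposes $\R^n$ into cells on which the incidence problem can be treated separately, permitting an induction on scales; or the tubes concentrate into a $\delta$-neighborhood of a real algebraic variety $Z$ of bounded degree. In the concentrated case one exploits the rigidity of tubes lying near $Z$: the direction map becomes constrained by the geometry of $Z$ (``planiness''), the tubes organize into ``grains,'' and a two-scale analysis---carried out in $\R^3$ by Katz and Zahl and pushed to a complete proof by Wang and Zahl---shows that such concentration can occur only through an almost self-similar (``sticky'') hierarchy, which a multiscale descent then rules out. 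In higher dimensions the plan is to run the same induction-on-scales while tracking the entire flag of intermediate-dimensional subspaces along which the tubes might concentrate, using multilinear Kakeya (Bennett--Carbery--Tao, Guth) as the base case at the non-degenerate end of the flag, and propagating a dimension gain $\varepsilon(n)>0$ through every scale until it forces $\kappa=0$.

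The point where this plan stalls---and the reason the conjecture is open for $n\ge 4$---is the structured case. In $\R^3$ the only concentration patterns are along $2$-planes or genuinely ``grainy'' surfaces, and one can show these force an $\mathrm{SL}_2$-type scaling symmetry incompatible with $\kappa>0$; but in $\R^n$ there is a whole hierarchy of scenarios---concentration along $k$-planes for each $2\le k\le n-1$, along ruled or otherwise special varieties of intermediate degree, and along sticky self-similar families indexed by flags---and no existing argument controls all of them simultaneously. A complete proof would require (i) a higher-dimensional \emph{structure theorem} classifying the near-extremizers of the $\delta$-discretized tube-incidence inequality; (ii) a resolution of the ``sticky Kakeya'' scenario in all dimensions, generalizing what is presently known only in $\R^3$; and (iii) a quantitative bookkeeping showing the per-scale gain does not decay to zero as one passes through the flag of subspaces. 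I expect step (i) to be the genuine bottleneck: absent it, this proposal honestly reduces Conjecture \ref{Conj} to a higher-dimensional algebraic structure theorem, which is precisely the open frontier of the subject.
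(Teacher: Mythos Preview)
The statement you are addressing is a \emph{conjecture}, not a theorem, and the paper does not prove it. The paper merely states Conjecture~\ref{Conj}, records its history (Drury, Bourgain, Wolff, Katz--Tao, Guth--Zahl, and the Wang--Zahl resolution in $\mathbb{R}^3$), and then moves on to its own results, which concern \emph{curved} Kakeya sets and are logically independent of Conjecture~\ref{Conj}. There is therefore no ``paper's own proof'' to compare against.

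You recognize this yourself: you explicitly label your submission a ``blueprint of the natural line of attack rather than a complete argument,'' and you correctly identify where the program stalls (the structured/sticky case for $n\ge 4$). As a survey of the known strategy your write-up is accurate and well informed, but it is not a proof and does not purport to be one. If the assignment was to supply a proof of the labeled statement, the honest answer is simply that no proof is available for $n\ge 4$; your step~(i)---a higher-dimensional structure theorem for near-extremizers---is, as you say, precisely the open problem, and nothing in your outline closes that gap.
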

	
	Drury \cite{Drury} proved that every Kakeya set in $\R^n$ has Hausdorff dimension at least $\tfrac{n+1}{2}$ using the X-ray transform. Bourgain \cite{Bourgain91.2} showed that in $\R^3$ these sets have dimension at least $\tfrac{7}{3}$—the first result to exceed the Kakeya compression exponent—via a \emph{bush argument}. Wolff \cite{Wolff95} obtained the lower bound $\tfrac{n+2}{2}$ using a \emph{hairbrush argument}, matching the compression exponent in even dimensions and improving on it in odd dimensions. Katz and Tao \cite{Katz-Tao} later proved a lower bound of $(2-\sqrt{2})(n-4)+3$ for Kakeya sets in $\R^n$. In $\R^4$, Guth and Zahl \cite{Guth-Zahl} improved Wolff's bound to $3+\frac{1}{40}$. Since then, standard Kakeya sets have surpassed the Kakeya compression exponent in every dimension $n$. Moreover, significant progress has been made on the Kakeya Conjecture \ref{Conj} (see \cite{D,Cor77,Schlag,Bourgain99,KLT,LT,ZahlJoshua,Katz19,Zahl,katz21,HRZ}). Most notably, Wang and Zahl \cite{sticky,assouad,WangZahl} proved the Kakeya Conjecture in $\R^3$ through a series of groundbreaking works.

	For standard Kakeya sets, the Kakeya compression phenomenon does not occur, and these sets are expected to have full Hausdorff dimension in $\mathbb{R}^{n}$. However, in the curved case, the Kakeya compression phenomenon may appear. Bourgain was the first to discover the Kakeya compression phenomenon \cite{Bourgain91.2} from the analytical perspective. He constructed a H\"ormander phase function in $\mathbb{R}^3$ for which the inequality \eqref{eq restriction} 
	fails for all $p < 4$. This implies that the range $p \geq 4$ for the estimate \eqref{eq restriction} with general phases satisfying $(H_{1})$ and $(H_{2})$ is sharp in $\mathbb{R}^{3}$.
	Subsequently, Minicozzi and Sogge \cite{MS} constructed Riemannian manifolds $M^n$ whose geodesic Nikodym sets concentrate on a $\lceil\tfrac{n+1}{2}\rceil$-dimensional submanifold. These Nikodym sets coincide with the curved Kakeya sets when the phase is the Riemannian distance function, which is positive definite (see Section \ref{Application}). Bourgain and Guth \cite{BG11} and Guth, Hickman, and Iliopoulou \cite{GHI} gave explicit positive-definite examples, showing that for some $\phi\in\mathbf H^+$ the induced curved Kakeya set has Hausdorff dimension $\lceil\tfrac{n+1}{2}\rceil$ and that \eqref{eq restriction} fails whenever $p<p(n)$.

	Several studies have addressed the Kakeya compression phenomenon in the variable-coefficient setting. Wisewell \cite{Wisewell05} proved that certain hyperbolic Kakeya sets in $\R^n$ satisfy $\dim_{\mathcal H}E\ge\tfrac{n+2}{2}$. Sogge \cite{Sogge99} and Xi \cite{Xi17} adapted Wolff’s hairbrush argument to show that geodesic Nikodym sets on constant-curvature manifolds $M^n$ likewise satisfy $\dim_{\mathcal H}E\ge\tfrac{n+2}{2}$. More recently, the first author, Wang, and Zhang \cite{GWZ22} introduced a unified condition on H\"ormander phases—now called \emph{Bourgain’s condition}—which encompasses these cases and offers a closer generalization of standard Kakeya sets.

	\begin{definition}[Bourgain's condition \cite{Bourgain91,GWZ22}]
		Let $\phi$ be a H\"ormander phase function.  We say that $\phi$ satisfies Bourgain's condition at $(\mathbf x_0;y_0)$ if for all $1\le i,j\le n-1$,
		\[
		(G_0\!\cdot\!\nabla_{\mathbf x})^2\partial^2_{y_i y_j}\phi(\mathbf x_0;y_0)
		= C(\mathbf x_0;y_0)\,(G_0\!\cdot\!\nabla_{\mathbf x})\partial^2_{y_i y_j}\phi(\mathbf x_0;y_0),
		\]
		where $C(\mathbf x_0;y_0)$ is a scalar function of $(\mathbf x_0;y_0)$.
	\end{definition}
	
	The first author, Wang, and Zhang proved that under Bourgain's condition the (strong) polynomial Wolff axioms hold, providing an effective tool to counteract the Kakeya compression phenomenon. Moreover, when both Bourgain's condition and $(H_2^+)$ are imposed, they showed that for every $\epsilon>0$ there is a constant $C_\epsilon>0$ such that
	\[
	p>p_{\mathrm{GWZ}}(n):=2+\frac{2.5921}{n}+O(n^{-2})
	\]
	implies \eqref{eq restriction} for all $\lambda\ge1$. This yields an improved exponent range for the Fourier restriction conjecture in higher dimensions. They further conjectured that if $\phi$ satisfies Bourgain's condition at every point then \eqref{eq restriction} holds for all $p\ge2n/(n-1)$, which would imply Conjecture \ref{Conj} for the corresponding curved Kakeya sets.

	In \cite{DGGZ24}, Dai, Gong, the first author, and Zhang studied curved Kakeya sets for phase functions satisfying Bourgain’s condition. They showed that such sets enjoy the Hausdorff-dimension lower bound from \cite{HRZ}, which was originally proven only for standard Kakeya sets. They also proved that a real-analytic Riemannian distance function satisfies Bourgain’s condition if and only if the manifold has constant sectional curvature. Moreover, as mentioned earlier, they introduced the contact order condition for H\"ormander phases in $\R^3$. Under the finite contact order assumption, they obtained quantitative improvements over the generally sharp exponent range $p\ge\tfrac{10}{3}$ for positive-definite phases in $\mathbb{R}^3$, yielding a Hausdorff dimension lower bound of $2+\tfrac{1}{13}$ for the associated curved Kakeya sets.

	Recently, Chen, Gan, the first author, Hickman, Iliopoulou, and Wright \cite{CGGHIW24} investigated real-analytic, translation-invariant H\"ormander phase functions. They introduced a Kakeya non-compression condition—a relatively weak restriction that prevents curved Kakeya sets from concentrating on lower-dimensional submanifolds Under this condition, they showed that the Hausdorff dimension of the associated curved Kakeya sets in $\mathbb{R}^n$ strictly exceeds $\tfrac{n+1}{2}$, improving on the Kakeya compression exponent for odd $n$.
	
	In the translation-invariant class (which is closed in the space of H\"ormander phase functions in the $C^\infty$ topology), finite contact order implies Kakeya non-compression. However, there are several key distinctions: finite contact order does not require translation invariance; it yields quantitative improvements rather than merely qualitative bounds; and it is stable under perturbations, whereas the non-compression condition is not.

	Gao, the second author, and the third author \cite{GaoLX} investigated two special classes of H\"ormander phase functions:
	\begin{itemize}
		\item the Riemannian distance function on constant‐curvature manifolds;
		\item translation‐invariant phases satisfying Bourgain’s condition.
	\end{itemize}
	They proved that for any $\phi$ in either class, the induced curved Kakeya set is diffeomorphic to a standard Kakeya set, and hence has the same Hausdorff dimension. Moreover, in $\R^3$, for translation‐invariant phases under Bourgain’s condition, \eqref{eq restriction} holds for all $p\ge3.25$ by combining \cite{BMV} and \cite{Guo-Oh}. Furthermore, under the additional positive-definiteness assumption, the estimate \eqref{eq restriction} holds for all $p\ge\tfrac{22}{7}$ \cite{WangWu2024}.

	More recently, Nadjimzadah \cite{Arian} introduced a new condition for translation‐invariant H\"ormander phases in $\R^3$ related to the $t$‐derivatives of the Hessian, and obtained a lower bound of $2.348$ for the Hausdorff dimension of the corresponding curved Kakeya sets.
	
	To summarize, in this paper we generalize the finite contact order condition introduced in $\mathbb{R}^3$ by Dai, Gong, the first author, and Zhang \cite{DGGZ24} to arbitrary dimension $n$ and show that it is generic. Under this assumption, we obtain:
	\begin{itemize}
		\item a quantitative improvement beyond the generally sharp Hausdorff dimension lower bound for curved Kakeya sets in odd dimensions (as in \cite{Wisewell05}), yielding $2+\tfrac{1}{7}$ in dimension 3 and improving the bound in \cite{DGGZ24};
		\item a quantitative improvement beyond the generally sharp exponent range for H{\"o}rmander-type oscillatory integral operators with positive-definite phases in odd dimensions (as in \cite{GHI}), recovering the result of \cite{DGGZ24} in dimension 3.
	\end{itemize}
	Finally, we remark that, morally speaking, Bourgain’s condition corresponds to the ``contact order being $n-1$''. In $\mathbb{R}^3$, setting $l=2$ in our proof yields a Hausdorff dimension bound of at least $7/3$, matching exactly the curved Kakeya bound from \cite{DGGZ24} under Bourgain’s condition. This agreement highlights the naturalness and robustness of our framework.

	\subsection{Structure of the paper}
	In Section \ref{reduction}, we reduce the proof of Theorem \ref{thm improvement} to a broad estimate for H\"ormander's oscillatory integral operators. In Section \ref{partitioning}, we introduce preliminaries on polynomial partitioning. In Section \ref{contact order}, we define the contact order for H\"ormander phases in arbitrary dimensions and show that the tube families induced by phases with finite contact order satisfy the polynomial Wolff axioms. In Section \ref{proof}, we establish the broad estimate for positive-definite phases with finite contact order in $\mathbb{R}^{n}$. In Section \ref{KKYM}, we focus on curved Kakeya sets and prove Theorem \ref{thm Kakeya} by establishing curved maximal Kakeya estimates. In Section \ref{generic}, we prove that the contact order condition is generic. In Section \ref{Application}, we present applications to Carleson--Sj\"olin operators and Nikodym sets on manifolds. Finally, in the Appendix, we analyze and compute the minimal required contact order $A(n)$ in $\mathbb{R}^{n}$.

	\subsection*{Acknowledgment}  This project is supported by the National Key Research and Development Program of China No. 2022YFA1007200. S. G. is partly supported by the Nankai Zhide Foundation, and partly supported by NSFC Grant No. 12426204. D. L. and Y. X. are supported by Zhejiang Provincial Natural Science Foundation of China under
	Grant No. LR23A010002, and NSFC Grant No.
	12171424.
	
	\subsection*{Notation} 
	We use $ B^m_r(x)$ to denote an $m$-dimensional ball of radius $r$ centered at $x$; we may simply write $ B^m_r$ or $ B_r$ if $x$ and $m$ are clear from context.
	
	We use $a\lesssim b$ to express that there exists a constant $C$ such that $a\le C\,b$, where $C$ is independent of all relevant parameters. 
	
	We use $a\sim b$ to express that there exist a constant $C>0$ such that $C^{-1}\,b\le a\le C\,b$, where $C$ is independent of all relevant parameters.
	
	We write $a\lesssim_{\alpha,\beta}b$ to indicate that there exists a constant $C_{\alpha,\beta}$, depending only on $\alpha,\,\beta$, such that $a\le C_{\alpha,\beta}\,b$. 
	
	Given a large parameter $R\ge1$ which should be clear from context, we use $a\lessapprox R^{\mu}b$ to denote that for every $\epsilon>0$, there exists a constant $C_{\epsilon}$ (independent of $R$) such that $a\le C_{\epsilon}\,R^{\mu+\epsilon}\,b$ for all $R\ge1$.
	
	Throughout the paper, for simplicity, the implicit and explicit constants are allowed to depend on the phase function $\phi$, which is fixed in the argument.

	\section{Standard reductions for H\"ormander oscillatory integral}\label{reduction}

	To prove Theorem \ref{thm improvement}, we consider the H\"ormander oscillatory integral operator with positive-definite phase:
	\begin{equation}
		T_{\lambda}^{\phi}f(x,t)
		=\int_{\mathbb{R}^{n-1}}e^{i\lambda\phi(x,t;y)}\,a(x,t;y)\,f(y)\,dy,
	\end{equation}
	where $\phi$ satisfies $(H_1)$ and $(H_2^+)$ and $a(x,t;y)$ supports in ${B}_{\varepsilon_{\phi}}^{n-1}\times{B}_{\varepsilon_{\phi}}^{1}\times{B}_{\varepsilon_{\phi}}^{n-1}$. Here $\varepsilon_{\phi}$ is a small constant that may depend on $\phi$.
	
	We perform the rescaling
	\begin{equation}
		\phi^{\lambda}(\mathbf{x};y)
		=\lambda\,\phi\Bigl(\frac{\mathbf{x}}{\lambda};y\Bigr),
		\quad
		a^{\lambda}(\mathbf{x};y)
		=a\Bigl(\frac{\mathbf{x}}{\lambda};y\Bigr),
	\end{equation}
	with $\lambda\ge1$ and $\mathbf{x}=(x,t)\in\mathbb{R}^{n-1}\times\mathbb{R}$. Define the rescaled operator
	\begin{equation}
		\mathcal{T}^{\lambda}f(x,t)
		=\int_{\mathbb{R}^{n-1}}e^{i\phi^{\lambda}(\mathbf{x};y)}\,a^{\lambda}(\mathbf{x};y)\,f(y)\,dy.
	\end{equation}
	
	Finally, set
	\begin{equation}
		\zeta(n,l)
		=\frac{4}{(3n-3)^2\,l+(3n-3)^2(2-n)+2(3n-3)}.
	\end{equation}
	To prove Theorem \ref{thm improvement}, it suffices to establish estimates for these rescaled operators.

	\begin{theorem}\label{thm osc}
		Let $n \geq 3$ be odd, and suppose that the phase function $\phi$ satisfies $(H_1)$ and $(H_2^{+})$ and has contact order at most $l$ at the origin in $\mathbb{R}^{n}$. Then there exists a constant $\varepsilon_\phi\in(0,\varepsilon_0]$ such that
		\begin{equation}
			\|\mathcal{T}^{\lambda}f\|_{L^{p}(\mathbb{R}^{n})} \lessapprox\|f\|_{L^{p}(\mathbb{R}^{n-1})}
		\end{equation}
		holds for all $\lambda \geq 1$, where the exponent $p$ satisfies:
		
		$$p \geq p(n,l):=2\,\dfrac{3n+1}{3n-3} - \zeta(n,l),$$
		
	\end{theorem}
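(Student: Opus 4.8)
The plan is to prove Theorem~\ref{thm osc} via the polynomial partitioning machinery of Guth--Hickman--Iliopoulou, adapted to track the extra gain that finite contact order provides. First I would set up the usual broad/narrow dichotomy: after a standard reduction (Section~\ref{reduction}), the operator $\mathcal T^\lambda$ is supported on the scale $\delta=R^{-1}$ (with $R=\lambda$), and $f$ is decomposed into wave packets $f=\sum_\theta f_\theta$ over $R^{-1/2}$-caps $\theta$. One runs the induction on the radius $R$: either the significant contribution to $\|\mathcal T^\lambda f\|_{L^p(B_R)}^p$ comes from a small ``narrow'' collection of caps lying in a neighborhood of a hyperplane, in which case one rescales and invokes the inductive hypothesis in lower dimensions (this part is verbatim from \cite{GHI} and contributes nothing new), or the contribution is ``broad'', meaning many transverse caps are involved, and one applies polynomial partitioning (Section~\ref{partitioning}) to subdivide $B_R$ into cells adapted to the zero set of a polynomial of controlled degree.

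Next I would carry out the cell vs.\ wall analysis. In the cellular case the tubes are distributed among $\sim D^n$ cells so that each tube meets $O(D)$ cells, and a straightforward pigeonholing plus rescaling closes the induction with a favorable power of $D$. The essential point — and the place where contact order enters — is the \emph{tangential} (wall) case, where the wave packets concentrate in the $R^{1/2+\delta}$-neighborhood of the variety $Z$. Here one uses that the relevant tubes $T_y^{R^{-1},\phi}(\mathbf x)$ satisfy the polynomial Wolff axioms established in Section~\ref{contact order}: a phase of contact order at most $l$ forces any $R^{1/2}$-ball's worth of tangent tubes to a degree-$D$ variety to have volume overlap no worse than what the count $R^{(n-1)/2}\cdot(\text{gain})$ permits, with the gain quantified by the exponent $\frac{n-1}{2(l+2-n)(n-1)+2}$ appearing in Theorem~\ref{thm Kakeya}. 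Combining this Kakeya-type input with the $L^2$-orthogonality on the neighborhood of $Z$ (a Córdoba-type square function estimate, using $(H_2^+)$ to control the geometry of the curved tubes), one obtains a tangential estimate with a quantitatively improved exponent over the $p(n)$ of \cite{GHI}.

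Then I would assemble the pieces: feed the cellular bound, the tangential bound, and the narrow/lower-dimensional bound into the standard ``book-keeping'' inequality of the polynomial method, optimize over the degree $D$ and the number of induction steps, and verify that the resulting admissible exponent is exactly
$$p(n,l)=2\,\frac{3n+1}{3n-3}-\frac{4}{(3n-3)^2\,l+(3n-3)^2(2-n)+2(3n-3)}.$$
The bound $l\ge A(n)$ is used precisely to guarantee that the contact-order hypothesis is compatible with the Wolff-axiom exponents needed to run the argument; for $n=3$, $l=4$ one checks the output collapses to the $\tfrac{22}{7}$-type exponent of \cite{DGGZ24}.

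I expect the main obstacle to be the tangential case — specifically, showing that finite contact order yields polynomial Wolff axioms with the \emph{sharp} exponent gain in all odd dimensions simultaneously, rather than just in $\mathbb R^3$. In $\mathbb R^3$ the contact-order analysis of \cite{DGGZ24} exploits low-dimensional geometry (curves in the plane, a $2$-dimensional Hessian) that does not transfer directly; one has to understand how the iterated derivatives $(G_0\cdot\nabla_{\mathbf x})^k\nabla_y^2\phi$ interact with a degree-$D$ variety in dimension $n$, and control the nonlinearity coming from the $O(|\mathbf x|^2|y|^2+|t||y|^3)$ error in the normal form. Getting the bookkeeping to produce a clean closed-form $\zeta(n,l)$ — and confirming it genuinely beats $p(n)$ for every odd $n$ — is where the real work lies; the narrow and cellular cases are essentially routine adaptations of \cite{GHI}.
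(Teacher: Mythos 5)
Your overall strategy---reduce to a $k$-broad estimate via Bourgain--Guth, run polynomial partitioning, and feed a contact-order-based polynomial Wolff axiom into the algebraic/tangential case---is the same as the paper's, which reduces Theorem \ref{thm osc} to Proposition \ref{prop Broad} with $k=\tfrac{n+1}{2}$ and then runs the multi-scale partitioning algorithm of \cite{GWZ22,HR} (nested varieties of every dimension $n'$, Propositions \ref{prop Property 1} and \ref{prop Property 3}) rather than the single-scale cell/wall induction you describe; that difference is largely presentational. The genuine gap is at the step you yourself identify as carrying the gain. You apply the polynomial Wolff axiom of Theorem \ref{thm PWA} to the tangent tubes as is, with the improvement already expressed by the Kakeya exponent of Theorem \ref{thm Kakeya}. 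But in the oscillatory-integral argument the tangency occurs at intermediate scales $r_{n'}<R$, and after parabolic rescaling the tubes are associated with the rescaled phase $\phi_{\lambda_{n'}}(x,t;y)=\lambda_{n'}\phi(x/\lambda_{n'},t/\lambda_{n'};y)$, $\lambda_{n'}=\lambda/r_{n'}$, for which the full-strength Wolff axiom \emph{fails}: the determinant lower bound degrades to
\[
\int\bigl|\det\bigl(M+\nabla_y^2\phi_{\lambda_{n'}}\bigr)\bigr|\,dt\gtrsim\lambda_{n'}^{\,n-1-l},
\]
so one only gets the weakened, scale-dependent count of Proposition \ref{prop scalingPWA},
\[
\#\{T\subset S\}\lesssim\min\bigl\{\mathcal L^n(S)\,\lambda_{n'}^{\,l-(n-1)},\ \mathcal L^n\bigl(\mathcal N_{1/\lambda_{n'}}(S)\bigr)\bigr\}\,\kappa^{-(n-1)-\epsilon},
\]
where the second bound comes from comparing $\phi_{\lambda_{n'}}$ with its translation-invariant normal form at the cost of a $1/\lambda_{n'}$-fattening of $S$. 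The closed form of $\zeta(n,l)$ arises precisely from converting this min into the uniform bound \eqref{eq improvement} via Lemma \ref{lem improvement}, taking a geometric average (parameter $\gamma$) against the generic transverse-equidistribution bound of Proposition \ref{prop Property 3}, and optimizing at the single dimension $n'=n-1$; none of this appears in your sketch, and inserting the unrescaled axiom into the bookkeeping would overstate the gain and not yield the stated exponent.

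Two smaller corrections: the quantity entering the broad-norm bookkeeping is not the Hausdorff-dimension gain $\tfrac{n-1}{2(l+2-n)(n-1)+2}$ of Theorem \ref{thm Kakeya} (that theorem is proved separately, by a direct curved maximal Kakeya estimate) but the tube-count/$L^2$ gain of Lemma \ref{lem improvement}; and for $n=3$, $l=4$ the theorem gives $p\ge\tfrac{10}{3}-\tfrac{1}{30}=3.3$, which is what matches \cite{DGGZ24}---the exponent $\tfrac{22}{7}$ belongs to the translation-invariant, Bourgain-condition setting of \cite{WangWu2024}, not to the contact-order result you are recovering.
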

	We note that the critical exponent tends to the exponent in \cite{GHI} when the contact order $l$ tends to $\infty$ for odd dimension $n$:
	\[
	\lim_{l \to \infty} p(n, l) = 
	2 \cdot \frac{3n + 1}{3n - 3} 
	\]

	In \cite{BG11,Guth18}, Bourgain and Guth decomposed the $L^p$ norm into \textbf{broad} and \textbf{narrow} parts, with the narrow part exhibiting better behavior in the induction process. To prove Theorem \ref{thm osc}, we first reduce to broad estimates by the Bourgain--Guth method.
	
	Before introducing the broad norms, we clarify some notation. Let $K\ge1$. We divide ${B}_{\varepsilon_{\phi}}^{n-1}$ into balls $\tau$ of radius $K^{-1}$. Let $f_{\tau}=f\cdot\chi_{\tau}$. Moreover, recall the Gauss map in $(H_{2})$:
	\[
	G_{0}(\mathbf{x};y)\coloneqq \bigwedge_{j=1}^{n-1}\partial_{y_{j}}\nabla_{\mathbf{x}}\phi(\mathbf{x};y),
	\]
	and define the normalized Gauss map
	\[
	G(\mathbf{x};y)=\frac{G_{0}(\mathbf{x};y)}{|G_{0}(\mathbf{x};y)|}.
	\]
	We also introduce the rescaled Gauss map
	\[
	G^{\lambda}(\mathbf{x};y)=G\Bigl(\frac{\mathbf{x}}{\lambda};y\Bigr).
	\]
	
	For $\mathbf{x}\in B_{R}$, denote
	\[
	G^{\lambda}(\mathbf{x},\tau)\coloneqq\{\,G^{\lambda}(\mathbf{x};y):y\in\tau\},
	\]
	where $B_{R}$ denotes the ball of radius $R$ in $\mathbb{R}^{n}$ and $1\le R\le\lambda$. Let $V\subset\mathbb{R}^{n}$ be a linear subspace. We write
	$
	\angle\bigl(G^{\lambda}(\mathbf{x};\tau),V\bigr)
	$
	for the smallest angle between any nonzero vector in $G^{\lambda}(\mathbf{x};\tau)$ and any nonzero vector in $V$.
	We let $\mathcal{U}(\mathbf{x},K,V)$ be the collection of all $\tau$ satisfying
	\[
	\angle\bigl(G^{\lambda}(\mathbf{x};\tau),V\bigr)\ge K^{-1},
	\]
	and let $\mathcal{E}(\mathbf{x},K,V)$ be the collection of all $\tau$ satisfying
	\[
	\angle\bigl(G^{\lambda}(\mathbf{x};\tau),V\bigr)\le K^{-1}.
	\]
	
	We now introduce the broad norm. Let $A$ be a fixed integer to be chosen later. Fix a ball $B_{K^{2}}\subset B_{R}$ centered at $\mathbf{x}$, and define
	\[
	\mu_{\mathcal{T}^{\lambda}f}(B_{K^{2}})
	\coloneqq
	\min_{V_{1},\dots,V_{A}\in Gr(k-1,n)}
	\Bigl(\max_{\substack{\tau\in\mathcal{U}(\mathbf{x},K,V_{a})\\\text{for all }1\le a\le A}}
	\|\mathcal{T}^{\lambda}f_{\tau}\|_{L^{p}(B_{K^{2}})}^{p}\Bigr).
	\]
	Here $Gr(k-1,n)$ is the Grassmannian of all $(k-1)$-dimensional subspaces in $\mathbb{R}^{n}$, and $k$ will be specified later. For $U\subset\mathbb{R}^{n}$, set
	\begin{equation}\label{eq norm}
		\|\mathcal{T}^{\lambda}f\|_{BL_{k,A}^{p}(U)}
		\coloneqq
		\Bigl(\sum_{B_{K^{2}}}\frac{|B_{K^{2}}\cap U|}{|B_{K^{2}}|}\,
		\mu_{\mathcal{T}^{\lambda}f}(B_{K^{2}})\Bigr)^{\frac{1}{p}},
	\end{equation}
	where the sum runs over a finitely overlapping collection of $K^{2}$-balls that cover $B_{R}$ and intersect $U$. The quantity in \eqref{eq norm} is called the broad part of $\mathcal{T}^{\lambda}f$.

	By the argument of Proposition 11.1 in \cite{GHI}, it suffices to prove Proposition \ref{prop Broad}. This reduction shows that when $p$ lies in a certain range, the validity of the $k$-broad estimates implies the corresponding estimates for H\"ormander-type oscillatory integral operators at the same exponent $p$. To be more precise, for a fixed integer $k$, if the exponent $p$ satisfies
	\begin{equation}\label{eq p}
		2\frac{2n - k + 2}{2n - k}\le p \le 2\frac{k - 1}{k - 2},
	\end{equation}
	then the $k$-broad estimate implies the desired bound for the oscillatory operator at exponent $p$.
	
	The inequality
	\begin{equation}\label{eq q}
		2\frac{2n - k + 2}{2n - k} \le 2\frac{n + k}{n + k - 2}
	\end{equation}
	holds precisely when $k\le \frac{n + 2}{2}$. Observing that $2\frac{n + k}{n + k - 2}$ decreases monotonically in $k$, the maximal integer $k$ admissible is $\left\lfloor \frac{n + 2}{2}\right\rfloor$. To align with the critical dimension of the Kakeya compression phenomenon, we take $k = \left\lceil \frac{n + 1}{2}\right\rceil$.

	\begin{itemize}
		\item For odd dimensions $n$, taking $k=\lceil\frac{n+1}{2}\rceil$, we have
		\[
		2\frac{3n+3}{3n-1} \le 2\frac{3n+1}{3n-3} - \zeta(n,l),
		\]
		and thus improved broad norm estimates give an overall improvement.
		\item For even dimensions $n$, although we can improve the broad estimate, the exact equality in \eqref{eq q} when $k=\lceil\frac{n+1}{2}\rceil$ precludes any improvement to H\"ormander’s oscillatory integral estimate.
	\end{itemize}
	
	In the following sections, we prove that the next proposition holds for phase functions with finite contact order, thereby completing the proof of Theorem \ref{thm osc}.
	
	\begin{prop}\label{prop Broad}
		Let $\lambda,\,K\ge 1$. For any odd $n$, let $k=(n+1)/2$. For every $\epsilon>0$, there exists $A\in\mathbb{N}$ such that
		\[
		\|\mathcal{T}^{\lambda}f\|_{BL^{p}_{k,A}(\mathbb{R}^{n-1})}
		\lesssim_{\epsilon,K}
		\lambda^{\epsilon}\,\|f\|_{L^{2}}^{2/p}\,\|f\|_{L^{\infty}}^{1-2/p},
		\]
		for every $p$ satisfying 
		\[
		p\ge 2\frac{n+k}{n+k-2}-\zeta(n,l).
		\] Moreover, the implicit constant is at most a bounded power of $K$.
	\end{prop}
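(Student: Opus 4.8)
The plan is to prove Proposition \ref{prop Broad} by the polynomial partitioning method of Guth, in the refined $k$-broad form of Guth--Hickman--Iliopoulou \cite{GHI,Guth18}, with the finite contact order hypothesis entering precisely at the tangential stage through the polynomial Wolff axioms established in Section \ref{contact order}. First I would recast the desired bound as a scale-$R$ inductive statement: for $1\le R\le\lambda$ one seeks
\[
  \|\mathcal{T}^{\lambda}f\|_{BL^{p}_{k,A}(B_{R})}
  \lessapprox R^{\gamma_{p}}\,\|f\|_{L^{2}}^{2/p}\,\|f\|_{L^{\infty}}^{1-2/p},
\]
for a nonnegative exponent $\gamma_{p}=\gamma_{p}(n,l)$ that vanishes exactly at $p=p(n,l)=2\frac{n+k}{n+k-2}-\zeta(n,l)$. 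After a wave-packet decomposition into $R^{-1/2}$-tubes and the usual normalizations ($\|f\|_{\infty}\le1$, $f$ supported on a single $\varepsilon_{0}$-cap), this becomes a Kakeya-type counting problem for the curved tube family, where the directional separation is measured through the rescaled Gauss map $G^{\lambda}$ appearing in the definition of the broad norm.

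Second, I would run the polynomial partitioning iteration. One chooses $P$ of degree $D$ (a large constant, or slowly growing) whose zero set $Z(P)$ equipartitions $\int|\mathcal{T}^{\lambda}f|^{p}$ over its $\sim D^{n}$ cells, and splits into the two standard regimes. In the \emph{cellular} case the broad mass is spread among the cells; here the induction hypothesis is applied on each $R/D$-subcell, and the gain comes from balancing the number of cells against the $D^{-(n-1)}$-smallness of $f$ restricted to each tube, which closes the recursion provided $p$ is not too small. In the \emph{algebraic} case the mass concentrates in the $R^{1/2+\delta}$-neighbourhood of $Z(P)$, and one further separates the tubes into those nearly tangent to $Z(P)$ and those meeting it transversally; the transverse tubes contribute a harmless term since each point of $B_{R}$ lies in only $O_{D}(1)$ of them. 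The tangential term is the decisive one.

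The crux is the tangential estimate. A priori the tangential tubes could all cluster near a $k$-plane with $k=(n+1)/2$, which is exactly the Kakeya compression configuration that forces the Guth--Hickman--Iliopoulou exponent $2\frac{3n+1}{3n-3}$ in odd dimensions. The finite contact order hypothesis rules this out: by the results of Section \ref{contact order}, the curved tube family obeys the polynomial Wolff axioms with a quantitative exponent depending on $l$, so the number of tangential tubes through a typical point of the $R^{1/2}$-neighbourhood of a bounded-degree variety is smaller than the compression bound by a power $R^{\eta(n,l)}$. Inserting this improved count into the tangential estimate — combined with the lower-dimensional $k$-broad/multilinear Kakeya input that already underlies \cite{GHI} — upgrades $\gamma_{p}$ by precisely $\zeta(n,l)$ relative to the GHI threshold; optimizing the auxiliary parameters ($D$, the tangency scale $\delta$, and the Wolff-axiom exponent) against one another is what produces the explicit value $\zeta(n,l)=\frac{4}{(3n-3)^{2}l+(3n-3)^{2}(2-n)+2(3n-3)}$. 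Finally I would combine the cellular and algebraic contributions, take $A$ large in terms of $\epsilon,n,D$ to absorb the $k$-broad losses accumulated over the $\sim\log_{D}R$ iteration steps (which only cost $\lambda^{\epsilon}$), and check that the recursion closes for every $p\ge p(n,l)$ because $\gamma_{p}\ge0$ there, with $K$-dependence remaining polynomial.

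I expect the main obstacle to be making the polynomial Wolff axioms from finite contact order interact correctly with the tangential geometry in the genuinely variable-coefficient setting: one must control the degree and the ``tangency defect'' of the varieties arising through the iteration, and verify that the quantitative Wolff gain survives the parabolic rescalings used in the induction. Because the Gauss map depends on position (unlike the translation-invariant case), tracking how the contact order transforms under these rescalings is the delicate technical point, handled by the stability of the finite-contact-order condition under perturbations noted in the introduction.
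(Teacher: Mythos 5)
Your overall strategy is the same as the paper's: polynomial partitioning in the $k$-broad framework, with the finite contact order hypothesis entering through polynomial Wolff axioms at the tangential/algebraic stage, followed by an optimization of the partitioning parameters that produces $\zeta(n,l)$. (The paper packages the iteration differently, invoking the multilevel Hickman--Rogers/GWZ algorithm of Section \ref{partitioning} via Propositions \ref{prop Property 1} and \ref{prop Property 3} rather than re-running the single-scale cellular/algebraic dichotomy, and the contact-order gain is fed in as an $L^{2}$ bound on the tangential portion $f^{\star}_{\nu,\mathcal S_{n'}}$ — a count of caps $\theta$ admitting a tube inside the thickened variety — rather than as a pointwise tube-overlap count; these are repackagings, not substantive differences.)

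The genuine gap is at exactly the point you defer to ``stability of the finite-contact-order condition under perturbations.'' Stability only says that nearby phases and nearby base points again have contact order $\le l$; it does not give a scale-uniform quantitative Wolff axiom for the rescaled phases $\phi_{\lambda_{n'}}(x,t;y)=\lambda_{n'}\phi(x/\lambda_{n'},t/\lambda_{n'};y)$ that arise at each intermediate scale $r_{n'}$. Indeed $\phi_{\lambda_{n'}}$ converges to a translation-invariant normal form as $\lambda_{n'}=\lambda/r_{n'}\to\infty$, and the determinant computation only yields $\det\bigl(M+\nabla_y^2\phi_{\lambda_{n'}}\bigr)\gtrsim\lambda_{n'}^{\,n-1-l}$ on a set of $t$ of unit measure: the unit-scale contact-order gain is damped by the factor $\lambda_{n'}^{\,l-(n-1)}$ after rescaling (recall $l\ge A(n)>n-1$), and this loss blows up precisely in the typical regime $r_{n'}\ll\lambda$ of the iteration. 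Inserting ``the'' Wolff-axiom gain directly into the tangential estimate, as your sketch proposes, is therefore only legitimate at the top scale. The paper's resolution is Proposition \ref{prop scalingPWA}: it records this lossy bound \emph{and} a second, complementary bound obtained by approximating $\phi_{\lambda_{n'}}$ by the translation-invariant phase $\tilde\phi_{\lambda_{n'}}$ (which satisfies the lossless polynomial Wolff axiom) at the price of thickening $S$ to its $\tfrac1{\lambda_{n'}}$-neighborhood; Lemma \ref{lem improvement} takes the minimum of the two, and the dichotomy $r_{n'}\lessgtr\lambda^{2/3}$ together with the geometric averaging in \eqref{eq average} extracts the uniform-in-scale exponent in \eqref{eq improvement}, which is what the final optimization (the vanishing of the $r_{n-1}$ and $D_{n-1}$ exponents) converts into the stated $\zeta(n,l)$. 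Without this two-sided rescaled estimate and the interpolation, your tangential step does not close, so this is the missing ingredient rather than a routine technicality.
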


	\section{Polynomial partitioning}\label{partitioning}
	In this section, we review preliminaries for polynomial partitioning, including wave packet decomposition and polynomial partitioning algorithms.
	
	\subsection{Wave packet decomposition}
	Given $\varepsilon_\phi\in(0,\varepsilon_0]$. Let $1\le r\le R=\lambda$ and take a collection $\Theta_{r}$ of dyadic cubes of side length $\frac{9}{11}r^{-\frac{1}{2}}$ covering the unit ball ${B}_{\varepsilon_\phi}^{n-1}$. Let $(\eta_{\theta})_{\theta\in\Theta_{r}}$ be a smooth partition of unity on ${B}_{\varepsilon_\phi}^{n-1}$ with $\supp \eta_{\theta}\subset \tfrac{11}{10}\theta$, satisfying
	\[
	\|\partial_{y}^{\alpha}\eta_{\theta}\|_{L^{\infty}}\lesssim_{\alpha}r^{\frac{|\alpha|}{2}}
	\quad\text{for all }\alpha\in\mathbb{N}_{0}^{n-1}.
	\]
	Denote the center of $\theta$ by $y_{\theta}$. Given a function $f$, we perform a Fourier series decomposition of $f\,\eta_{\theta}$ on the region $\tfrac{11}{9}\theta$ and obtain
	\[
	f(y)\,\eta_{\theta}(y)\,\chi_{\frac{11}{10}\theta}(y)
	=\Bigl(\frac{r^{\frac{1}{2}}}{\pi}\Bigr)^{n-1}
	\sum_{v\in r^{\frac{1}{2}}\mathbb{Z}^{n-1}}
	\widehat{f\,\eta_{\theta}}(v)\,e^{2\pi i\,v\cdot y}\,\chi_{\frac{11}{10}\theta}(y).
	\]
	Let $\tilde\eta_{\theta}$ be a nonnegative smooth cutoff with $\supp \tilde\eta_{\theta}\subseteq \tfrac{11}{9}\theta$ and $\tilde\eta_{\theta}\equiv 1$ on $\tfrac{11}{10}\theta$. Then
	\[
	f(y)\,\eta_{\theta}(y)\,\tilde\eta_{\theta}(y)
	=\Bigl(\frac{r^{\frac{1}{2}}}{\pi}\Bigr)^{n-1}
	\sum_{v\in r^{\frac{1}{2}}\mathbb{Z}^{n-1}}
	\widehat{f\,\eta_{\theta}}(v)\,e^{2\pi i\,v\cdot y}\,\tilde\eta_{\theta}(y).
	\]
	Define
	\[
	f_{\theta,v}(y)\coloneqq \Bigl(\frac{r^{\frac{1}{2}}}{\pi}\Bigr)^{n-1}
	\widehat{f\,\eta_{\theta}}(v)\,e^{2\pi i\,v\cdot y}\,\tilde\eta_{\theta}(y).
	\]
	Hence
	\[
	f=\sum_{(\theta,v)\in\Theta_{r}\times r^{\frac{1}{2}}\mathbb{Z}^{n-1}}f_{\theta,v}.
	\]
	
	For any $\omega\in{B}_{\varepsilon_{\phi}}^{n-1}$ and $t\in\mathbb{R}$, define $\Phi(\omega,t;y)$ to be the mapping satisfying
	\[
	\nabla_{y}\phi\bigl(\Phi(\omega,t;y),\,t;\,y\bigr)=\omega,
	\]
	so that $\Phi$ gives the first $n-1$ spatial components of the solution curve. Let $\sigma\coloneqq \epsilon^{C}$, where $C$ is a large constant and $\epsilon$ is as in Theorem \ref{prop Broad}. Define curved $r^{\frac{1}{2}+\sigma}$-tubes by
	\[
	T_{\theta,v}\coloneqq
	\bigl\{(x,t):\,\bigl|x/\lambda-\Phi(v/\lambda,\,t/\lambda;\,y_{\theta})\bigr|\le r^{\frac{1}{2}+\sigma}/\lambda,\;t\in[0,r]\bigr\}.
	\]
	This completes the wave packet decomposition for the ball ${B}_{r}^{n}\subset\mathbb{R}^{n}$. Denote the collection of these wave packets by $\mathbb{T}[{B}_{r}^{n}]$.
	
	Similarly, for $\mathbf{x}_{0}\in {B}_{\lambda}^{n}$, define wave packet decompositions for the ball ${B}_{r}^{n}(\mathbf{x}_{0})$ and write $\mathbb{T}[{B}_{r}^{n}(\mathbf{x}_{0})]$ for that collection. Then for $|\mathbf{x}-\mathbf{x}_{0}|\lesssim r$,
	\[
	\mathcal{T}^{\lambda}f(\mathbf{x})
	=\sum_{T\in \mathbb{T}[{B}_{r}^{n}(\mathbf{x}_{0})]}
	\mathcal{T}^{\lambda}f_{T}(\mathbf{x}).
	\]

	\subsection{Polynomial partitioning algorithms}
	Now, we introduce some notions and definitions that will be used in polynomial partitioning algorithms. 
	
	\begin{definition}[Cells]
		Let $P:\mathbb{R}^{n}\rightarrow \mathbb{R}$ be a non-zero polynomial. Define
		\[
		Z(P)\coloneqq\{z\in \mathbb{R}^{n}: P(z)=0\}.
		\]
		We let $\operatorname{cell}(P)$ denote the collection of all connected components of $\mathbb{R}^{n}\setminus Z(P)$. Each element of $\operatorname{cell}(P)$ will be referred to as a cell of $P$.
	\end{definition}
	
	\begin{definition}[Transverse complete intersection]
		Let $P_{1}, \ldots, P_{n-m}: \mathbb{R}^{n} \rightarrow \mathbb{R}$ be polynomials where $0\le m \le n-1$. Consider the common zero set
		\begin{equation}\label{eq Algebra}
			Z\bigl(P_{1}, \ldots, P_{n-m}\bigr)
			\;:=\;\{x \in \mathbb{R}^{n}: P_{1}(x)=\cdots=P_{n-m}(x)=0\}.
		\end{equation}
		Suppose that for all $z \in Z\bigl(P_{1}, \ldots, P_{n-m}\bigr)$, one has
		\[
		\bigwedge_{j=1}^{n-m} \nabla P_{j}(z) \neq 0.
		\]
		Then any connected branch of this set, or a union of connected branches, is called an $m$-dimensional transverse complete intersection. Given a set $Z$ of the form \eqref{eq Algebra}, the degree of $Z$ is defined by
		\[
		\min\Bigl\{\prod_{i=1}^{n-m} \deg P_{i}\Bigr\},
		\]
		where the minimum is taken over all possible representations of $Z=Z\bigl(P_{1}, \ldots, P_{n-m}\bigr)$.
	\end{definition}

	\begin{lemma}[Polynomial partitioning, Guth \cite{Guth18}, Hickman and Rogers \cite{HR}]
		Fix $r\gg1$ and $d\in\mathbb{N}$, and suppose $F\in L^{1}(\mathbb{R}^{n})$ is non-negative and supported on $B_{r}\cap\mathcal{N}_{r^{1/2+\sigma_{\circ}}}(Z)$ for some $0<\sigma_{\circ}\ll1$, where $B_{r}$ is a ball of radius $r$ in $\mathbb{R}^{n}$ and $Z$ is an $m$-dimensional transverse complete intersection of degree at most $d$. At least one of the following cases holds:
		
		\underline{Cellular case.} There exists a polynomial $P:\mathbb{R}^{n}\rightarrow\mathbb{R}$ of degree $O(d)$ with the following properties:
		\begin{itemize}
			\item[(1)] $\lvert\cell(P)\rvert \sim d^{m}$ and each $O'\in \cell(P)$ has diameter at most $r/d$.
			\item[(2)] If we define 
			\[
			\mathcal{O}
			:=\{\,O'\setminus \mathcal{N}_{r^{1/2+\sigma_{\circ}}}(Z): O'\in \cell(P)\},
			\]
			then 
			\[
			\int_{O}F \sim d^{-m}\int_{\mathbb{R}^{n}}F
			\quad\text{for all }O\in\mathcal{O}.
			\]
		\end{itemize}
		
		\underline{Algebraic case.} There exists an $(m-1)$-dimensional transverse complete intersection $Y$ of degree at most $O(d)$ such that
		\[
		\int_{B_{r}\cap \mathcal{N}_{r^{1/2+\sigma_{\circ}}}(Z)}F
		\;\lesssim\;
		\int_{B_{r}\cap \mathcal{N}_{r^{1/2+\sigma_{\circ}}}(Y)}F.
		\]
	\end{lemma}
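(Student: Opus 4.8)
The plan is to prove the lemma by the iterated polynomial ham-sandwich construction on the variety $Z$, following Guth \cite{Guth18} and Hickman--Rogers \cite{HR}; I will only lay out the skeleton. Write $d\mu = F\,dx$ for the finite measure cut out by $F$, supported on $B_r\cap\mathcal{N}_{r^{1/2+\sigma_\circ}}(Z)$, and fix defining polynomials $Q_1,\dots,Q_{n-m}$ realizing $Z$ as an $m$-dimensional transverse complete intersection with $\prod_i \deg Q_i\le d$. The engine of the argument is the following form of the ham-sandwich theorem: for $D\gtrsim\deg Z$ a power of two, the restriction to $Z$ of the space of polynomials of degree $\le D$ has dimension $\gtrsim D^m$ (the Hilbert function of an $m$-dimensional variety grows like $D^m$), so by the Borsuk--Ulam theorem any family of $\lesssim D^m$ finite measures supported near $Z$ can be simultaneously bisected by the zero set of a single polynomial of degree $O(D)$.

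Next I would run a dyadic bisection process with $J:=\lceil m\log_2 d\rceil$ stages. Entering stage $j$ one has a partition of $\mathbb{R}^n$ (minus thin discarded neighborhoods) into $\sim 2^{j-1}$ open pieces, each carrying $\mu$-mass $\sim 2^{-(j-1)}\mu(\mathbb{R}^n)$; applying the ham-sandwich statement to these $\sim 2^{j-1}$ pieces produces a bisecting polynomial $P_j$ of degree $O(2^{j/m})$. At each stage one checks whether, for every current piece, the $r^{1/2+\sigma_\circ}$-neighborhood of $Z(P_j)\cap Z$ captures at most half the $\mu$-mass of that piece. If this holds at every stage $j\le J$, set $P:=\big(\prod_{j\le J}P_j\big)\cdot P_{\mathrm{grid}}$, where $P_{\mathrm{grid}}$ is a product of $O(d)$ equally spaced affine functions in each coordinate direction; then $\deg P\lesssim\sum_{j\le J}2^{j/m}+O(d)\lesssim d$, the cells have diameter $\le r/d$, and there are $\sim 2^J\sim d^m$ of them with balanced $\mu$-mass, which yields the cellular case after passing to the reduced cells $\mathcal{O}$.

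If instead the check fails at some stage $j$, then $Z(P_j)\cap Z$ carries a fixed fraction of $\mu(\mathbb{R}^n)$, and I would pass to the algebraic case: perturbing $P_j$ within the finite-dimensional space of polynomials of its degree, a Bertini/Sard-type genericity argument produces $\tilde P_j$ with $\nabla Q_1\wedge\cdots\wedge\nabla Q_{n-m}\wedge\nabla\tilde P_j\ne 0$ along $Z(Q_1,\dots,Q_{n-m},\tilde P_j)$ and with the mass concentration preserved; a suitable union $Y$ of connected branches of this common zero set is then an $(m-1)$-dimensional transverse complete intersection with $\deg Y\lesssim\deg Z\cdot\deg P_j$ and $\int_{B_r\cap\mathcal{N}_{r^{1/2+\sigma_\circ}}(Y)}F\gtrsim\int F$.

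The step I expect to be the main obstacle is the algebraic case: one must choose the perturbation $\tilde P_j$ so that it simultaneously (a) keeps all $n-m+1$ gradients independent along the new common zero set, making $Y$ a genuine transverse complete intersection of dimension exactly $m-1$, and (b) does not move the $\mu$-mass off $\mathcal{N}_{r^{1/2+\sigma_\circ}}(Z(P_j)\cap Z)$, all while keeping the degree of $Y$ at the size $O(d)$ claimed in the statement; the genericity and the degree bookkeeping are the delicate parts, and are carried out in \cite{Guth18, HR}. A secondary point is verifying that the coordinate grid $P_{\mathrm{grid}}$ in the cellular case subdivides the cells to diameter $\le r/d$ without spoiling the mass balance, which is routine once the number of grid-refined cells is counted against the $2^J$ pieces.
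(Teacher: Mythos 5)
This lemma is stated in the paper with a citation to \cite{Guth18} and \cite{HR} and no proof, so there is no in-paper argument to compare against; your sketch follows the same standard route as those references (iterated polynomial ham--sandwich adapted to the variety $Z$, dichotomy between equidistributed cells and a wall, and a transversality argument turning the wall into a neighborhood of an $(m-1)$-dimensional transverse complete intersection), and at the level of a citation-backed outline it is broadly accurate.

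As a standalone proof, however, the skeleton has one point where it would genuinely break, and it is not quite where you flagged. You invoke the bound $\dim\bigl(V_D|_Z\bigr)\gtrsim D^m$ merely to justify bisecting $\lesssim D^m$ measures with degree $O(D)$ --- but ordinary Stone--Tukey in $\mathbb{R}^n$ already does that (degree $\sim N^{1/n}\le D$), so as written your bisection step permits $P_j$ to be an arbitrary bisecting polynomial. The real role of the Hilbert-function count in Guth's argument is to select the bisecting polynomial from a complement of the ideal $I_D(Z)$, i.e.\ to guarantee $P_j$ does not vanish identically on (the mass-carrying components of) $Z$. Without that, already at the first stage $P_1$ could contain $Z$ in its zero set; then all of the mass lies in the wall, and the algebraic case cannot be salvaged by your Bertini-type perturbation: a generic $\tilde P_1$ indeed meets $Z$ in an $(m-1)$-dimensional set, but the mass, which is merely near $Z$, need not be near $Z(\tilde P_1)\cap Z$, so the conclusion $\int_{\mathcal{N}(Y)}F\gtrsim\int F$ fails. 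Relatedly, even when $P_j\notin I(Z)$, the wall is $\mathcal{N}_{r^{1/2+\sigma_\circ}}(Z(P_j))\cap\mathcal{N}_{r^{1/2+\sigma_\circ}}(Z)$, which is not the same as $\mathcal{N}_{r^{1/2+\sigma_\circ}}(Z(P_j)\cap Z)$; in \cite{Guth18,HR} this discrepancy is handled not by a single mass-preserving perturbation but by covering the wall by the neighborhoods of a finite family of transverse complete intersections (perturbations/translates of $P_j$ against $Q_1,\dots,Q_{n-m}$) and pigeonholing, which is also where the degree bound $O(d)$ for $Y$ is verified. The grid factor $P_{\mathrm{grid}}$ for the diameter bound is indeed the standard fix, with the caveat that the balanced-mass conclusion then holds after restricting to the subcollection of cells carrying most of the mass, which is how the ``$\sim d^{-m}$'' normalization in the statement is to be read.
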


	We revisit the polynomial partitioning algorithm from \cite{GWZ22}, modified from \cite{HR}. We define a sequence of small parameters $\{\sigma_i\}_{i=0}^n$ satisfying
	\[
	\sigma \ll \sigma_{n} \ll \sigma_{n-1} \ll \cdots \ll \sigma_{i} \ll \cdots \ll \sigma_{1} \ll \sigma_{0} \ll \epsilon,
	\]
	where $0 \le i \le n$. For example, these $\sigma$ and $\sigma_i$ can be taken as constant powers of $\epsilon$.
	
	We then partition ${B}_R^n$ into a finitely overlapping collection of balls $\{B_{\nu}\}_{\nu}$, where each $B_{\nu}$ has radius $R^{1-\sigma}$. The algorithm is described in detail in \cite{GWZ22}. Below, we recall its outputs.
	
	\begin{enumerate}
		\item[\textbf{Output 1:}] We obtain a sequence of nodes
		\[
		\mathbf{n}_{0}^{\star}, \mathbf{n}_{1}^{\star}, \dots, \mathbf{n}_{l_{0}}^{\star},
		\]
		where for each $0 \le l \le l_{0}$ (with $l_{0} \in \mathbb{N}$), $\mathbf{n}_{l}^{\star}$ represents a collection of open sets in $\mathbb{R}^{n}$. Each set $\mathbf{n}_{l}^{\star}$ is characterized by two parameters:
		\begin{itemize}
			\item A dimensional parameter $\dim (\mathbf{n}_{l}^{\star})$.
			\item A radius parameter $\rho (\mathbf{n}_{l}^{\star})$.
		\end{itemize}
		Moreover, let $m := \dim(\mathbf{n}_{l_{0}}^{\star})$ with $m \ge k$. The dimensional parameter $\dim(\mathbf{n}_{l}^{\star})$ is non-increasing in $l$.
		
		\item[\textbf{Output 2:}] We extract the key components from the nodes. A node $\mathbf{n}_{l}^{\star}$ is called an \textit{R-child} if
		\[
		\dim(\mathbf{n}_{l}^{\star}) = \dim(\mathbf{n}_{l-1}^{\star}) - 1.
		\]
		We collect all R-child nodes and denote them by
		\[
		\mathcal{G}_{n}, \mathcal{G}_{n-1}, \dots, \mathcal{G}_{m},
		\]
		where $\mathcal{G}_{n} = \mathbf{n}_{0}^{\star}$. Each element of $\mathcal{G}_{n'}$ has the form
		\[
		B_{r_{n'}} \cap \mathcal{N}_{r_{n'}^{\frac{1}{2} + \sigma_{n'}}}(S_{n'}),
		\]
		where:
		\begin{itemize}
			\item The node $\textbf{n}_{0}^*$ consists of only one element ${B}_{R}^{n}$.
			\item $S_{n'}$ is an algebraic variety of dimension $n'$.
			\item $\mathcal{N}_{r_{n'}^{\frac{1}{2} + \sigma_{n'}}}(S_{n'})$ denotes the $r_{n'}^{\frac{1}{2}+\sigma_{n'}}$-neighborhood of $S_{n'}$.
			\item $\dim(\mathcal{G}_{n'}) = n'$ for all $m \le n' \le n$.
			\item $r_{n'} := \rho(\mathcal{G}_{n'})$ for all $m \le n' \le n$.
			\item $r_{m-1} := 1$.
		\end{itemize}
		From now on, for convenience, we denote
		\[
		\mathcal{S}_{n'} := B_{r_{n'}} \cap \mathcal{N}_{r_{n'}^{\frac{1}{2}+\sigma_{n'}}}(S_{n'}).
		\]
		
		\item[\textbf{Output 3:}] Each open set $O \in \mathbf{n}_{l_{0}}^{\star}$ satisfies
		\[
		\mathrm{diam} (O) \le R^{\sigma_{0}},
		\]
		which serves as the stopping condition for the algorithm. For each such $O$, we define an associated function $f_{\nu, O}$ by
		\[
		f_{\nu, O} = \sum_{T \in \mathbb{T}'[B_{\rho(\mathbf{n}_{l_{0}}^{\star})}]} f_{T},
		\]
		where:
		\begin{itemize}
			\item $B_{\rho(\mathbf{n}_{l_{0}}^{\star})} \subset \mathbb{R}^{n}$ is the ball of radius $\rho(\mathbf{n}_{l_{0}}^{\star})$ containing $O$.
			\item $\mathbb{T}'[B_{\rho(\mathbf{n}_{l_{0}}^{\star})}]$ is a subcollection of $\mathbb{T}[B_{\rho(\mathbf{n}_{l_{0}}^{\star})}]$.
		\end{itemize}
		
		\item[\textbf{Output 4:}] For each $\nu$ and each $\mathcal{S}_{n'} \in \mathcal{G}_{n'}$ satisfying $\mathcal{S}_{n'} \cap B_{\nu} \neq \varnothing$, we define
		\[
		f_{\nu, \mathcal{S}_{n'}}^{\star} = \sum_{T \in \mathbb{T}[B_{r_{n'}}]} f_{T}.
		\]
		We use $\mathbb{T}[B_{r_{n'}}]$ to denote the collection of tubes that satisfy the following properties:
		\begin{itemize}
			\item $|T_{1} \cap T_{2}| \leq \frac{1}{2} |T_{1}|$ for any $T_{1}, T_{2} \in \mathbb{T}[B_{r_{n'}}]$;
			\item each tube has length $r_{n'}$ and width $r_{n'}^{\frac{1}{2} + \sigma_{n'}}$;
			\item each tube is contained in $\mathcal{S}_{n'}$.
		\end{itemize}
		\item[\textbf{Output 5:}] We have parameters $\{D_i\}$ that are integer powers of $d$ and satisfy
		\begin{equation}\label{eq output 5}
			D_{i} \le \frac{r_{i+1}}{r_{i}} \quad \text{for all } i < n,
		\end{equation}
		with the additional condition $D_{n} = 1$.
	\end{enumerate}
	We remark that \eqref{eq output 5} in \textbf{Output 5} is slightly different than that stated in \cite{GWZ22}; however, this condition still holds by Lemma 5.10 in \cite{GWZ22} and is a better version for us to use.
	
	At this stage, we introduce some notation that will help describe the properties of the outputs given above. Let $\{p_{n'}\}_{n'=m}^n$ satisfy
	\[
	p_{m} \ge p_{m+1} \ge \cdots \ge p_{n} = p \ge 2,
	\]
	where $m \ge k$, and $k$, $p$ are the exponents in Theorem \ref{prop Broad}. For each $m+1 \le n' \le n-1$, define $\alpha_{n'}$, $\beta_{n'} \in [0,1]$ by
	\[
	\frac{1}{p_{n'}} \;=\; \frac{1-\alpha_{n'-1}}{2} \;+\; \frac{\alpha_{n'-1}}{p_{n'-1}},
	\qquad
	\beta_{n'} \;=\; \prod_{i=n'}^{n-1} \alpha_{i},
	\]
	and set $\alpha_{n} = \beta_{n} = 1$. Then, using Properties 1-3 in Section 5 of \cite{GWZ22}, the outputs satisfy the following two propositions.
	\begin{prop}\label{prop Property 1}
		\[
		\|\mathcal{T}^{\lambda}f\|_{BL^{p}_{k,A}(B_{R})} 
		\lesssim_{\epsilon}R^{O(\sigma)} 
		\prod_{i=m-1}^{n-1} r_{i}^{\frac{\beta_{i+1}-\beta_{i}}{2}} 
		D_{i}^{\frac{\beta_{i+1}}{2} - \bigl(\frac{1}{2} - \frac{1}{p_{n}}\bigr)} 
		\|f\|_{2}^{\frac{2}{p_{n}}} 
		\max_{O \in \mathbf{n}_{l_{0}}^{\star}} 
		\|f_{\nu,O}\|_{2}^{1 - \frac{2}{p_{n}}},
		\]
		where $\nu$ denotes the ball $B_{\nu} \subset B_{R}$ containing $O$ and of radius $R^{1-\sigma}$, and $m \ge k$.
	\end{prop}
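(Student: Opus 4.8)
The plan is to obtain Proposition~\ref{prop Property 1} by running the polynomial partitioning algorithm recorded in Outputs~1--5 to completion and accounting for the loss accrued at each node of the resulting tree, exactly as in the proof of Property~1 in Section~5 of \cite{GWZ22} (itself adapted from \cite{HR}). Concretely, one proves by downward induction on $l$ that the broad norm of $\mathcal{T}^{\lambda}f$ over $\mathbf n_l^\star$ is controlled by $R^{O(\sigma)}$, times the product of the factors $r_i^{(\beta_{i+1}-\beta_i)/2}D_i^{\beta_{i+1}/2-(1/2-1/p_n)}$ accumulated for the dimensions already descended through, times an $\ell^{q}$-sum (with $q=p_{n'}$ at the nodes of dimension $n'$) over the pieces of $\mathbf n_l^\star$ of the corresponding sub-broad-norms built from the wave packets attached to each piece. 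The inductive step is the cellular/algebraic dichotomy of the polynomial partitioning lemma, and in either case one invokes Properties~1--3 of \cite[\S5]{GWZ22} directly.

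In a \emph{cellular} step at a node of dimension $n'$ and radius $r_{n'}$, the lemma produces a polynomial of degree $O(d)$ whose cells equidistribute $|\mathcal{T}^{\lambda}f|^{p}$ over $\sim d^{n'}$ pieces of diameter $\le r_{n'}/d$; since each curved $r_{n'}^{1/2+\sigma_{n'}}$-tube meets $O(d)$ cells (a B\'ezout-type bound for the core curves of the tubes), combining equidistribution with the $\ell^{2}$-almost-orthogonality of the wave packet decomposition on each cell yields a gain. In an \emph{algebraic} step (an R-child) descending from dimension $n'$ to $n'-1$, one passes to the $r_{n'-1}^{1/2+\sigma_{n'-1}}$-neighborhood of an $(n'-1)$-dimensional transverse complete intersection with no loss in $L^{p}$, applies the transverse/tangential analysis---in particular the transverse equidistribution estimate for the wave packets tangent to the lower-dimensional variety---and interpolates the resulting $L^{2}$ gain against the trivial $L^{p_{n'-1}}$ bound with parameter $\alpha_{n'-1}$ defined via $\tfrac1{p_{n'}}=\tfrac{1-\alpha_{n'-1}}2+\tfrac{\alpha_{n'-1}}{p_{n'-1}}$. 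Tracking the cellular gains (which contribute the $D_i$-powers) and composing the interpolation steps over all algebraic descents (which produce the $r_i$-powers and the exponents $\beta_{i+1}=\prod_{j=i+1}^{n-1}\alpha_j$) through the tree reproduces the displayed product; here the sharper bound $D_i\le r_{i+1}/r_i$ from \eqref{eq output 5} is what keeps the $D_i$-powers under control.

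When the algorithm halts, $\dim(\mathbf n_{l_0}^\star)=m\ge k$ and every $O\in\mathbf n_{l_0}^\star$ has diameter $\le R^{\sigma_0}$. On each such $O$ one uses $\|\mathcal{T}^{\lambda}f\|_{BL^{p}(O)}\le\|\mathcal{T}^{\lambda}f_{\nu,O}\|_{L^{p}(O)}$ and interpolates the trivial $L^{2}$ and $L^{\infty}$ bounds for $\mathcal{T}^{\lambda}$ on an $R^{\sigma_0}$-ball; resumming the $\ell^{p}$-sum over $\mathbf n_{l_0}^\star$ and using the almost-orthogonality of the families $\{f_{\nu,O}\}$, which come from pairwise essentially disjoint tube collections, converts this into $\|f\|_{2}^{2/p_n}\max_{O}\|f_{\nu,O}\|_{2}^{1-2/p_n}$. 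The stated index range $m-1\le i\le n-1$ is then matched by the conventions $r_{m-1}=1$ and $D_{n}=1$, which contribute trivially.

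The main obstacle I anticipate is the scale-by-scale compatibility of the $k$-\emph{broad} norm with the partitioning: one must check that the angular conditions defining $\mathcal{U}(\mathbf x,K,V_a)$---and hence the $k$-broad structure---survive restriction to the wave packets relevant to a given cell or tangent to a lower-dimensional variety, and that transverse equidistribution applies to the broad norm at each intermediate scale $r_{n'}$ and not merely to the full $L^{p}$ norm. This is precisely the mechanism that makes the recursion close in \cite{GWZ22} and \cite{HR}; the only genuinely new point here is to confirm that replacing the original Output~5 bound by the sharper \eqref{eq output 5} does not disturb it, which it does not, since $D_i\le r_{i+1}/r_i$ enters only as an upper bound on the cellular loss.
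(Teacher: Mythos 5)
Your proposal is correct and follows essentially the same route as the paper: the paper's proof of Proposition~\ref{prop Property 1} is simply the observation that it follows from Property~1 together with repeated applications of Property~2 of the partitioning algorithm in \cite{GWZ22} (adapted from \cite{HR}), which is exactly the cellular/algebraic bookkeeping you describe. The only small inaccuracy is your suggestion that the refined bound \eqref{eq output 5} is needed to control the $D_i$-powers here; it plays no role in Proposition~\ref{prop Property 1} itself (whose statement carries the explicit $D_i$-factors) and is only invoked later, in the computation of $\tilde{C}$ in the proof of Proposition~\ref{prop Broad}.
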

	Proposition \ref{prop Property 1} follows from Property 1 together with repeated applications of Property 2 in \cite{GWZ22}.
	
	\begin{prop}\label{prop Property 3}
		For $n' \le n-1$, one has
		\[
		\max_{\mathcal{S}_{n'} \in \mathcal{G}_{n'}} 
		\|f_{\nu,\mathcal{S}_{n'}}^{\star}\|_{2}^{2} 
		\lesssim_{\epsilon}R^{O(\sigma)} 
		\Bigl(\frac{r_{n'+1}}{r_{n'}}\Bigr)^{-\frac{n-n'-1}{2}} 
		D_{n'}^{-n' + \sigma} 
		\max_{\mathcal{S}_{n'+1} \in \mathcal{G}_{n'+1}} 
		\|f_{\nu,\mathcal{S}_{n'+1}}^{\star}\|_{2}^{2},
		\]
		\[
		\max_{\mathcal{S}_{n'} \in \mathcal{G}_{n'}} 
		\max_{\theta} 
		\|f_{\nu,\mathcal{S}_{n'}}^{\star}\|_{L_{\mathrm{avg}}^{2}(\theta)}^{2} 
		\lesssim_{\epsilon}R^{O(\sigma)} 
		\Bigl(\frac{r_{n'+1}}{r_{n'}}\Bigr)^{-\frac{n-n'-1}{2}} 
		D_{n'}^{\sigma} 
		\max_{\mathcal{S}_{n'+1} \in \mathcal{G}_{n'+1}} 
		\max_{\theta} 
		\|f_{\nu,\mathcal{S}_{n'+1}}^{\star}\|_{L_{\mathrm{avg}}^{2}(\theta)}^{2},
		\]
		where $\theta$ is a ball of radius $\rho^{-\frac{1}{2}}$ in frequency space, with $1 \le \rho \le r_{n'}$.
	\end{prop}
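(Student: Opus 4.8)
The plan is to track how a single wave packet shrinks as the polynomial partitioning algorithm descends from the $n'+1$-dimensional stratum to the $n'$-dimensional stratum, and to feed this geometric information into an $L^2$-orthogonality argument together with the polynomial Wolff axioms established in Section \ref{contact order}. Recall from \textbf{Output 4} that $f^\star_{\nu,\mathcal{S}_{n'}}$ is assembled from tubes $T$ of length $r_{n'}$ and width $r_{n'}^{1/2+\sigma_{n'}}$ that lie inside the $r_{n'}^{1/2+\sigma_{n'}}$-neighborhood of the $n'$-dimensional variety $S_{n'}$, and that each such $S_{n'}$ sits inside some $\mathcal{S}_{n'+1}\in\mathcal{G}_{n'+1}$. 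First I would invoke the nested-tube structure: each tube contributing to $f^\star_{\nu,\mathcal{S}_{n'}}$ is contained in a tube contributing to $f^\star_{\nu,\mathcal{S}_{n'+1}}$ (after passing to the appropriate sub-ball $B_{r_{n'}}\subset B_{r_{n'+1}}$), so the $L^2$ mass is only redistributed, never created. The two displayed inequalities are then two bookkeeping statements: the first controls the total $L^2$ norm, the second controls the $L^2$ norm localized to a single frequency cap $\theta$ of radius $\rho^{-1/2}$ with $\rho\le r_{n'}$.

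The key step is the quantitative count of how many $r_{n'}$-tubes, lying in $\mathcal{N}_{r_{n'}^{1/2+\sigma_{n'}}}(S_{n'})$ and pairwise $\tfrac12$-overlapping, can be packed inside a fixed $r_{n'+1}$-tube $T'$ whose direction is associated to a cap $\tau$. Here is where the contact-order hypothesis enters: by the polynomial Wolff axioms proved in Section \ref{contact order}, the tube family induced by $\phi$ does not concentrate on low-dimensional algebraic sets beyond what the dimension count allows, so the number of such sub-tubes is $\lesssim_\epsilon R^{O(\sigma)} D_{n'}^{\,\sigma}$ up to the natural volume ratio $(r_{n'+1}/r_{n'})^{(n-n'-1)/2}$ coming from the codimension of $S_{n'}$ inside $S_{n'+1}$. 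Combining this packing bound with $L^2$-orthogonality of wave packets in distinct tubes (which holds up to $R^{O(\sigma)}$ losses, as in \cite{GWZ22, HR}) yields the first inequality with the extra gain $D_{n'}^{-n'}$, since spreading the mass of $f^\star_{\nu,\mathcal{S}_{n'+1}}$ across $\sim D_{n'}^{\,n'}$ essentially disjoint children costs a factor $D_{n'}^{-n'}$ on the maximal child. For the second inequality, I would localize everything to a single $\theta$-cap first: restricting to one frequency cap removes the angular summation, so only the volume ratio and the harmless $D_{n'}^{\,\sigma}$ factor survive, with no $D_{n'}^{-n'}$ gain available because a single cap sees only one direction.

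The main obstacle I anticipate is making the passage from $\mathcal{G}_{n'+1}$ to $\mathcal{G}_{n'}$ rigorous when the algorithm takes several intermediate (non-$R$-child) steps between these two $R$-children: one must check that the algebraic sets $S_{n'}$ produced along the way are genuinely transverse complete intersections of controlled degree sitting inside $\mathcal{S}_{n'+1}$, so that the neighborhood $\mathcal{N}_{r_{n'}^{1/2+\sigma_{n'}}}(S_{n'})$ really does have the expected codimension-$(n-n')$ behavior relative to $\mathcal{S}_{n'+1}$. This is precisely the content of Property 3 in Section 5 of \cite{GWZ22}, so in practice I would cite that property and only verify that the contact-order Wolff axioms from Section \ref{contact order} supply the one missing ingredient—namely the tube-packing estimate inside $\mathcal{N}_{r_{n'}^{1/2+\sigma_{n'}}}(S_{n'})$—in the generality of arbitrary odd $n$, rather than re-deriving the whole algorithm. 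The remaining bookkeeping (tracking the $\sigma_i$ hierarchy so that all the $R^{O(\sigma)}$ errors are truly negligible) is routine.
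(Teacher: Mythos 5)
Your final paragraph actually lands near the paper's own treatment: the paper gives no independent proof of this proposition at all, but simply imports it as Property 3 (together with Properties 1--2) of Section 5 of \cite{GWZ22}, whose partitioning algorithm and wave packets are used here unchanged. The genuine problem is the role you assign to the contact-order hypothesis. Proposition \ref{prop Property 3} is stated, and must hold, for \emph{general} positive-definite H\"ormander phases --- there is no contact-order assumption in it, and none is needed. The factor $D_{n'}^{-n'}$ comes from the cellular structure of the partitioning (each of the $\sim D^{n'}$ cells carries a comparable share of the mass, while each tube meets only $O(D)$ cells), and the factor $(r_{n'+1}/r_{n'})^{-\frac{n-n'-1}{2}}$ comes from transverse equidistribution of wave packets tangent to a lower-dimensional variety, an $L^2$/Fourier-localization statement, not a tube-packing count. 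Declaring the polynomial Wolff axioms of Section \ref{contact order} to be ``the one missing ingredient'' is a misattribution: the Wolff axiom bounds the \emph{number of directions} of tubes contained in a semialgebraic set and therefore produces bounds against $\|f\|_\infty^2$ (this is exactly Lemma \ref{lem improvement}, via Proposition \ref{prop scalingPWA}), whereas the present proposition is a recursive $L^2$-to-$L^2$ bound against $\max_{\mathcal{S}_{n'+1}}\|f^{\star}_{\nu,\mathcal{S}_{n'+1}}\|_2^2$; the directional count simply cannot generate either the $D_{n'}^{-n'}$ gain or the frequency-cap-localized second inequality, which involves no directional information at all.

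Moreover, if the contact-order Wolff axiom were genuinely an ingredient of Proposition \ref{prop Property 3}, the logic of Section \ref{proof} would collapse into circularity: there the \emph{general} estimate \eqref{eq general}, derived precisely from this proposition, is geometrically averaged against the contact-order estimate \eqref{eq improvement} coming from Lemma \ref{lem improvement}. The whole design of the argument requires this proposition to be the contact-order-free half of that interpolation. So the correct route is the one you gesture at but then deviate from: cite Property 3 of \cite{GWZ22} verbatim (checking only that the algorithm and the wave packet decomposition here are the same as there), and keep the contact-order input confined to Proposition \ref{prop scalingPWA} and Lemma \ref{lem improvement}.
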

	
	\section{Contact order and polynomial Wolff axioms in $\mathbb{R}^n$}\label{contact order}
	
	In this section, we generalize the contact order condition from $\mathbb{R}^3$ \cite{DGGZ24} to arbitrary dimensions and prove that the family of tubes induced by a phase function with finite contact order at the origin satisfies the polynomial Wolff axioms.
	
	Let $\phi$ be a H\"ormander phase, and $\varepsilon_\phi\in(0,\varepsilon_0]$. For any point $(x_0,t_0,y_0)\in {B}_{\varepsilon_{\phi}}^{n-1}\times {B}_{\varepsilon_{\phi}}^{1}\times {B}_{\varepsilon_{\phi}}^{n-1}$, H\"ormander’s non-degeneracy condition guarantees the existence and uniqueness of a curve $t\mapsto (X_0(t),\,t)\in {B}_{\varepsilon_{\phi}}^{n-1}\times {B}_{\varepsilon_{\phi}}$ satisfying 
	\[
	\nabla_{y}\phi\bigl(X_0(t)+x_0,\;t+t_0;\;y_0\bigr)
	=\nabla_{y}\phi\bigl(x_0,\;t_0;\;y_0\bigr).
	\]
	We normalize $\phi$ at $(x_0,t_0;y_0)$ by setting 
	\[
	\phi_0(x,t;y)=\phi\bigl(x+x_0,\;t+t_0;\;y+y_0\bigr)
	-\phi\bigl(x_0,\;t_0;\;y+y_0\bigr).
	\]
	In ${B}_{\varepsilon_{\phi}}^{n}$, define 
	\[
	D_{i,j}(t;x_0,t_0;y_0)
	=\partial_{y_i}\partial_{y_j}\,\phi_0\bigl(X_0(t),\,t;\,0\bigr),
	\quad 1\le i,j\le n-1,
	\]
	and abbreviate $D_{i,j}(t)$ when no confusion arises. Let the matrix 
	\[
	D(t) = \big( D_{i,j}(t) \big)_{1 \le i,j \le n-1},
	\]
	and denote its determinant by $\textbf{D}(t)$.
	For integers $1\le k\le n-2$ and indices $i_1,\dots,i_k$, $j_1,\dots,j_k$, write 
	\[
	\bigl(D_{i_1,j_1}D_{i_2,j_2}\cdots D_{i_k,j_k}\bigr)^{(h)}(0)
	\]
	for the $h$th derivative in $t$ of $D_{i_1,j_1}\cdots D_{i_k,j_k}$ at $t=0$.  Consider the matrix $\tilde D$ whose rows consist of all tuples 
	\[
	\bigl((D_{i_1,j_1}\cdots D_{i_k,j_k})^{(h)}(0)\bigr)_{1\le h\le l},
	\quad 1\le k\le n-2,
	\]
	with indices satisfying $i_\alpha\neq i_\beta$ and $j_\alpha\neq j_\beta$ for $\alpha\neq\beta$, together with the row $(\textbf{D}^{(h)}(0))_{1\le h\le l}$.  Since $D(t)$ is symmetric, $\tilde D$ contains duplicate rows; remove all duplicates to obtain $\mathcal D$.
	
	For any $1\leq h \leq l$, the $h$th column of $\mathcal D$ consists of the term $\textbf{D}^{(h)}(0)$ and all distinct terms of the form
	\[
	\left.\frac{\partial^h}{\partial t^h}\Bigl(\prod_{\alpha=1}^k D_{\,i_\alpha,j_\alpha}\Bigr)\right|_{t=0},
	\quad 1\le k\le n-2,
	\]
	subject to
	\begin{itemize}
		\item $k=1$: the single entries $D_{i,j}$ (all distinct).
		\item $k=2$: products $D_{i_1,j_1}D_{i_2,j_2}$ with $i_1\neq i_2$ and $j_1\neq j_2$ (all distinct).
		\item In general: products $\prod_{\alpha=1}^k D_{\,i_\alpha,j_\alpha}$ where
		\[
		i_\alpha\neq i_\beta
		\quad\text{and}\quad
		j_\alpha\neq j_\beta
		\quad\text{for all }\alpha\neq\beta.
		\]
	\end{itemize}
	
	We note that the matrix $\mathcal{D}(t)$ excludes elements of the form $(D_{i_{1},j_{1}}\cdots D_{i_{n-1},j_{n-1}})^{(h)}(0)$ for $1\le h\le l$.
	
	Let $A(n)$ be an integer depending on the dimension $n$ given by
	\[
	A(n)=1 + \sum_{k=1}^{n-2} \Biggl[ 
	\binom{n-1}{k} \frac{k! + I(k)}{2} 
	+ \sum_{i=0}^{k-1} 
	\frac{\binom{n-1}{i}\binom{n-1-i}{\,k-i\,}\binom{n-1-k}{\,k-i\,}}{2} 
	\Bigl( k! - (k-i)! \,\frac{i! - I(i)}{2} \Bigr) 
	\Biggr],
	\]
	where $I(k)$ is the number of involutions in the symmetric group $S(k)$. The row count of the new matrix $\mathcal{D}(t)$ is exactly given by $A(n)$. For a proof, see Proposition \ref{prop scalingPWA} in the Appendix.
	In $\mathbb{R}^{n}$, since the row count of $\mathcal{D}(t)$ is $A(n)$, we only need to consider integers $l \ge A(n)$. We refer to $A(n)$ as the critical contact order in $\mathbb{R}^{n}$, which is the smallest admissible contact order.
	
	\begin{definition}[Contact order condition]
		If there exists an integer $l \ge A(n)$ such that the matrix $\mathcal{D}$ has row rank $A(n)$, then we say that the phase $\phi$ has contact order $\le l$ at the point $(x_{0},t_{0};y_{0})$.
	\end{definition}
	
	When $n=3$, we have $A(n)=4$ and the matrix $\mathcal{D}$ is
	\begin{equation}\label{eq ct3}
		\begin{pmatrix}
			\textbf{D}^{(1)}(0)      & \cdots  & \textbf{D}^{(l)}(0) \\
			D_{11}^{(1)}(0) & \cdots & D_{11}^{(l)}(0) \\
			D_{12}^{(1)}(0) & \cdots & D_{12}^{(l)}(0) \\
			D_{22}^{(1)}(0) & \cdots & D_{22}^{(l)}(0) \\
		\end{pmatrix}.
	\end{equation}
	If there exists an integer $l \ge 4$ such that the matrix $\mathcal{D}$ has rank 4, then we say the phase $\phi$ has contact order $l$ at the point $(x_{0},t_{0})$. This matrix \eqref{eq ct3} appeared in \cite{Bourgain91} and corresponds to the contact order condition in dimension 3 introduced by Dai, Gong, the first author, and Zhang \cite{DGGZ24}. They also showed that the Riemannian distance function of manifolds $M^{3}$ with Sogge’s chaotic curvature \cite{Sogge99} has a contact order $\le 4$ at the origin.
	
	For phase functions $\phi$ with contact order $\le l$ at the origin, we establish that the associated families of tubes in different directions satisfy the polynomial Wolff axioms introduced in \cite{KatzR}. The polynomial Wolff axiom is a powerful tool to counteract the Kakeya compression phenomenon. The following theorem is precisely where we determine $ \varepsilon_\phi $ for a given phase function $ \phi $.
	
	\begin{theorem}\label{thm PWA}
		Let $n \ge 3$ and $l\ge A(n)$. Suppose $\phi$ has a finite contact order at most $l$ at the origin $(0,0;0)$. Then there exists a constant $\varepsilon_\phi\in(0,\varepsilon_0]$ such that the following polynomial Wolff axiom holds:
		Let $S$ be a semi-algebraic set with complexity at most $E$, and let $\mathbb{T}$ be a direction-separated family associated with $(\phi,\varepsilon_{\phi})$ (see Definition \ref{dsf}). For any $\epsilon > 0$, there exists a constant $C(n,E,\epsilon)$ such that
		\[
		\#\{T \in \mathbb{T} : T \subset S\} \;\le\; 
		C(n,E,\epsilon)\,\mathcal{L}^{n}(S)\,\delta^{1 - n - \epsilon},
		\]
		where $\mathcal{L}^{n}(S)$ denotes the $n$-dimensional Lebesgue measure of $S$.
	\end{theorem}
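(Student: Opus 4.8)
The plan is to reduce the polynomial Wolff axiom to a transversality statement between the tangent directions of the Kakeya curves and the tangent space of any low-degree algebraic variety, and then to extract that transversality from the finite-contact-order hypothesis via a quantitative Łojasiewicz-type inequality for the determinant $\mathbf{D}(t)$ and the minors appearing in $\mathcal{D}$. More precisely, I would follow the scheme of Katz--Rogers \cite{KatzR} and Guo--Wang--Zhang \cite{GWZ22}: by a standard pigeonholing and rescaling argument (partitioning the $\delta$-tubes according to the cell of a polynomial partition, and using the Wongkew/Solymosi--Tao type bound on the volume of a tube lying in the neighborhood of a variety of bounded degree), it suffices to show the following \emph{local} non-degeneracy: for every algebraic variety $Z$ of dimension $m<n$ and degree $\le d$, and every point $\mathbf{x}$, the set of directions $y$ for which the Kakeya curve $\Gamma_y^\phi$ through $\mathbf{x}$ stays $\delta$-close to $Z$ over a unit time interval has measure $\lesssim_{d,\epsilon}\delta^{1-\epsilon}$ (with the right power of $\delta$ after accounting for the curve's curvature). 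Equivalently, the family of curves $\{\Gamma_y^\phi\}$ must not be ``tangent to high order'' to any bounded-degree variety along a positive-measure set of directions.

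The heart of the matter, and the place where finite contact order enters, is the following. Fix a direction $y$ and a base curve; the defining relation $\nabla_y\phi(X_0(t),t;y_0)=\nabla_y\phi(x_0,t_0;y_0)$ together with $(H_1)$ lets us Taylor-expand the curve and its nearby ``sibling'' curves, and the obstruction to the curves spreading out transversally to an $m$-plane is exactly governed by the vanishing order of the quantities $D_{i,j}(t)$, their products, and $\mathbf{D}(t)$ at $t=0$ — this is the computation that originally motivated the matrix \eqref{eq ct3} in $\mathbb{R}^3$. The contact order condition says precisely that the matrix $\mathcal{D}$ (whose rows are the $t$-jets up to order $l$ of all the admissible minors) has \emph{full} row rank $A(n)$; equivalently, no nontrivial linear combination of these minors vanishes to order $l+1$ at $t=0$. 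By the Łojasiewicz inequality for the real-analytic (or, for $C^\infty$ phases, the finite-jet polynomial) functions involved, full rank of $\mathcal{D}$ forces a lower bound of the form: on the set where the relevant minor is $\le\rho$, the time interval has length $\lesssim\rho^{1/(l+2-n)}$ or better. Feeding this back through the wave-packet/rescaling dictionary converts it into the claimed bound $\#\{T\in\mathbb{T}:T\subset S\}\lesssim_{n,E,\epsilon}\mathcal{L}^n(S)\,\delta^{1-n-\epsilon}$, uniformly over semi-algebraic $S$ of complexity $\le E$ (the complexity bound being what makes the Łojasiewicz constants uniform, via an effective Tarski--Seidenberg / quantitative semialgebraic geometry argument).

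In carrying this out I would: (i) first record the normal-form reduction of $\phi$ at the origin and the formula for $\Phi(\omega,t;y)$, writing down the $t$-Taylor expansion of $\Gamma_y^\phi$ in terms of the $D_{i,j}(t)$; (ii) state and prove the key linear-algebra lemma: full row rank $A(n)$ of $\mathcal{D}$ is equivalent to the non-vanishing to order $l$ of every nontrivial combination of the listed minors, hence to a quantitative Łojasiewicz estimate with exponents controlled by $l$ and $n$; (iii) invoke the semialgebraic Wolff-axiom machinery of \cite{KatzR} (in the variable-coefficient formulation of \cite{GWZ22}), plugging in the estimate from (ii) in place of the straight-line transversality used there; (iv) track the dependence on $E$ through the quantitative bounds in semialgebraic geometry to get the stated constant $C(n,E,\epsilon)$. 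The main obstacle I anticipate is step (ii) combined with the \emph{uniformity} in step (iv): one must show that full rank of $\mathcal{D}$ — a condition at a single point $t=0$ — yields a Łojasiewicz-type inequality with constants that do not degenerate as $S$ ranges over all complexity-$\le E$ semialgebraic sets, and that the correct $\delta$-power emerging from the vanishing order is exactly $1-n$ (not worse); handling the combinatorics of which minors appear in $\mathcal{D}$ (the precise content of $A(n)$, proved in the Appendix) is what makes the bookkeeping delicate, but it is the interplay with the geometry of $S$ that is the genuinely hard part.
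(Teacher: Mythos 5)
Your high-level scheme (reduce via the Katz--Rogers/Guo--Wang--Zhang machinery, then feed in the finite-contact-order hypothesis as a non-degeneracy of Taylor expansions along the curves) is the same as the paper's, but the proposal has a genuine gap exactly at the step you flag as the heart of the matter. The reduction the paper actually uses (Theorem 3.1 of \cite{GWZ22}) is \emph{not} the statement you propose about the measure of directions whose curves stay $\delta$-close to a variety; it is the uniform Jacobian-type bound
\[
\int_{|t|\le \varepsilon_0}\bigl|\det\bigl(\nabla_v\Phi(v,t;y)\,M+\nabla_y\Phi(v,t;y)\bigr)\bigr|\,dt\;\gtrsim\;1
\]
for \emph{every} $(n-1)\times(n-1)$ matrix $M$, and the real uniformity issue is in $M$, not (as you suggest) in the complexity-$E$ semialgebraic set $S$ --- the Katz--Rogers/GWZ framework already delivers uniformity in $S$ once the displayed bound is known. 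After differentiating the defining relation for $\Phi$ and using $(H_1)$, this becomes $\int|\det(U+D(t))|\,dt\gtrsim 1$ with $U$ arbitrary and $D(t)=(D_{i,j}(t))$. The mechanism that makes finite contact order usable is the algebraic identity
\[
\det\bigl(U+D(t)\bigr)=\det(U)+(C_1,\dots,C_{A(n)-1},1)\cdot\mathbf F(t)^{T},
\]
where the components of $\mathbf F$ are precisely the minors collected in $\mathcal D$ and, crucially, the coefficient of $\mathbf D(t)$ is identically $1$, so the coefficient vector has $\ell^1$-norm $\ge 1$ no matter what $U$ is. Full row rank of $\mathcal D$ then forces some Taylor coefficient $H_{k'}$ of $W(t)=\det(U+D(t))$ at $t=0$, $1\le k'\le l$, to be $\gtrsim 1+\|(C_1,\dots,C_{A(n)-1})\|_1$, uniformly in $U$, and a sublevel-set/van der Corput argument finishes. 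Your proposal never identifies this expansion or the role of the fixed coefficient $1$, and instead invokes an unproved ``Łojasiewicz-type inequality'' with exponent $\rho^{1/(l+2-n)}$; as stated, that inequality is neither justified (full rank of the jet matrix at a single $t=0$ does not by itself give a Łojasiewicz bound uniform over the arbitrary coefficients $C_k$ coming from $M$) nor the right quantity to track.

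A second, smaller misconception: you worry about whether the vanishing order produces ``exactly'' the power $\delta^{1-n}$ and about an $l$-dependent gain. Theorem \ref{thm PWA} has no $l$-dependent gain at all --- the conclusion is the plain polynomial Wolff axiom, and the integral bound $\gtrsim 1$ above is scale-free; the exponent $l$ only enters later, in the rescaled statement (Proposition \ref{prop scalingPWA}), where the same computation for $\phi_{\lambda_{n'}}$ degrades the lower bound to $\lambda_{n'}^{\,n-1-l}$. So the delicate bookkeeping you anticipate between $l$, the minors, and the geometry of $S$ does not occur in this theorem; what must be supplied, and is missing from your argument, is the determinant expansion and the resulting uniformity over the arbitrary matrix $M$.
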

	\begin{proof}
		First, let us fix some notation. We denote the curve pointing in direction $y$ and passing through $v$ as $(\Phi(v,t;y),t)$, i.e.,
		\[
		\nabla_{y}\phi(\Phi(v,t;y),t;y)=v.
		\]
		Using the standard reduction in \cite[Theorem 3.1]{GWZ22}, it suffices to show that 
		\[
		\int_{|t|\le \varepsilon_{\phi}}\bigl|\det\bigl(\nabla_{v}\Phi(v,t;y)\cdot M+\nabla_{y}\Phi(v,t;y)\bigr)\bigr|\,dt\gtrsim1,
		\]
		for all $|v|\le\varepsilon_{\phi}$, $|y|\le\varepsilon_{\phi}$ and an arbitrary matrix $M$.
		
		Take derivatives in $v$ on the equation above; then we have 
		\[
		\nabla_{x}\nabla_{y}\phi\bigl(\Phi(v,t;y),t;y\bigr)\,\nabla_{v}\Phi(v,t;y)=I_{(n-1)\times(n-1)}.
		\]
		Taking derivatives in $y$ on this equation gives 
		\[
		\nabla_{x}\nabla_{y}\phi\bigl(\Phi(v,t;y),t;y\bigr)\,\nabla_{y}\Phi(v,t;y)
		+\nabla_{y}^{2}\phi\bigl(\Phi(v,t;y),t;y\bigr)=\mathbf{0},
		\]
		where $\mathbf{0}$ denotes the $(n-1)\times(n-1)$ zero matrix.
		
		Since $\phi$ satisfies H\"ormander’s non-degeneracy condition, $\nabla_{y}^{2}\phi\bigl(\Phi(v,t;y),t;y\bigr)$ is a non-degenerate matrix. By combining the two preceding equations, we can reduce the inequality above to  
		\[
		\int_{|t|\le \varepsilon_{\phi}}\bigl|\det\bigl(M+\nabla_{y}^{2}\phi(\Phi(v,t;y),t;y)\bigr)\bigr|\,dt
		\gtrsim1.
		\]
		We only need to prove that this holds for any $v,y\in{B}_{\varepsilon_{\phi}}^{n-1}$ and any matrix $M$.
		
		Consider the integrand
		\[
		\det\bigl(M+\nabla_{y}^{2}\phi(\Phi(v,t;y),t;y)\bigr)
		=\det\bigl(U+\nabla_{y}^{2}\phi_{0}(\Phi(v,t;y),t;y)\bigr),
		\]
		where 
		\[
		U=M+\nabla_{y}^{2}\phi(\Phi(v,0;y),0;y).
		\]
		Since $M$ is arbitrary, $U$ is also arbitrary (it depends on $M,v,y$). Write 
		\[
		U=\begin{pmatrix}
			u_{1,1}&u_{1,2}&\cdots&u_{1,n-1}\\
			u_{2,1}&u_{2,2}&\cdots&u_{2,n-1}\\
			\vdots&\vdots&\ddots&\vdots\\
			u_{n-1,1}&u_{n-1,2}&\cdots&u_{n-1,n-1}
		\end{pmatrix}.
		\]
		
		{ The constant $\varepsilon_{\phi}$ can be chosen sufficiently small such that the phase $\phi$ has contact order $\le l$ for any point $(x,t;y)\in {B}_{\epsilon_{\phi}}^{n-1}\times {B}_{\varepsilon_{\phi}}^{1}\times{B}_{\epsilon_{\phi}}^{n-1}$, owing to the stability of full row rank under small perturbations.} Now we only analyze the contact order at $(0,0;y)$.
		For notational simplicity, we still write $D_{i,j}(t)$ for $D_{i,j}(t;0,0;y)$ throughout this analysis.
		
		By the definition of $D$, it is equivalent to consider the matrix
		\[
		U+D
		=\begin{pmatrix}
			D_{1,1}(t)+u_{1,1}&D_{1,2}(t)+u_{1,2}&\cdots&D_{1,n-1}(t)+u_{1,n-1}\\
			D_{2,1}(t)+u_{2,1}&D_{2,2}(t)+u_{2,2}&\cdots&D_{2,n-1}(t)+u_{2,n-1}\\
			\vdots&\vdots&\ddots&\vdots\\
			D_{n-1,1}(t)+u_{n-1,1}&D_{n-1,2}(t)+u_{n-1,2}&\cdots&D_{n-1,n-1}(t)+u_{n-1,n-1}
		\end{pmatrix}.
		\]
		
		Define the basis‐vector function
		\[
		\mathbf{F}(t)
		=\bigl(D_{i,j}(t),\;\prod_{\alpha=1}^{2}D_{i_\alpha,j_\alpha}(t),\;\dots,\;\textbf{D}(t)\bigr),
		\]
		where the products range over all indices satisfying $i_{\alpha}\neq i_{\beta}$ and $j_{\alpha}\neq j_{\beta}$ for $\alpha\neq\beta$, excluding duplicates. The last component of $\mathbf{F}(t)$ is $\textbf{D}(t)$, while the first $A(n)-1$ components are all possible products
		\[
		\prod_{\alpha=1}^{k}D_{i_\alpha,j_\alpha}(t),
		\quad
		1\le k\le n-2,
		\quad
		i_{\alpha}\neq i_{\beta},\;j_{\alpha}\neq j_{\beta}\;\forall\,\alpha\neq\beta.
		\]
		
		We expand the determinant:
		\[
		\det(U+D)
		=\det(U)+(C_{1},C_{2},\dots,\mathbf C_{A(n)-1},1)\cdot \mathbf{F}(t)^{T},
		\]
		where $\mathbf{F}(t)^{T}$ is the transpose of $\mathbf{F}(t)$, and $\{C_{k}\}$ ($1\le k\le A(n)-1$) are coefficients determined by $U$.
		
		Since $\mathbf{F}(t)$ is smooth, perform a Taylor expansion up to the $l$-th order on each component of $\mathbf{F}(t)$:
		\[
		\det(U+D)
		=\det(U)
		+(C_{1},C_{2},\dots,\mathbf C_{A(n)-1},1)\,\mathcal{D}\,\bigl(t,\tfrac{t^{2}}{2!},\dots,\tfrac{t^{l}}{l!}\bigr)^{T}
		+o(t^{l}).
		\]
		Denote 
		\[
		(H_{1},H_{2},\dots,H_{l})
		:=(C_{1},C_{2},\dots,\mathbf C_{A(n)-1},1)\,\mathcal{D}.
		\]
		Then we can write
		\[
		W(t)\coloneqq\det\bigl(M+\nabla_{y}^{2}\phi(\Phi(v,t;y),t;y)\bigr)
		=\det(U)
		+\sum_{k=1}^{l}H_{k}\,\frac{t^{k}}{k!}
		+o(t^{l}).
		\]
		
		Since the matrix $\mathcal{D}$ has full row rank $l$, we have
		\[
		\|(H_{1},H_{2},\dots,H_{l})\|_{1}
		\gtrsim1+\|(C_{1},C_{2},\dots,\mathbf C_{A(n)-1})\|_{1}.
		\]
		By the pigeonhole principle, there exists an integer $1\le k'\le l$ such that $|W^{k'}(0)|\gtrsim1$, so there must exist a constant‐measure neighborhood where $|W(t)|>0$. Hence
		\[
		\int_{|t|\le\varepsilon_{\phi}}|W(t)|\,dt\gtrsim1.
		\]
		This completes the proof of Theorem \ref{thm PWA}.
	\end{proof}
	
	\section{Proof of Proposition \ref{prop Broad} via polynomial Wolff axioms}\label{proof}
	This section is devoted to the proof of Proposition \ref{prop Broad}. The key ingredients include a rescaled version of the polynomial Wolff axiom and polynomial partitioning.
	
	We first introduce some notation. By polynomial partitioning, the curved tubes in $\mathbb{T}[B_{r_{n'}}]$ are contained in $\mathcal{S}_{n'}$ for any $m+1 \le n' \le n-1$. Without loss of generality, assume that the semialgebraic set $\mathcal{S}_{n'}$ is centered at the origin. Thus, we consider tubes of length $r_{n'}$. We define
	\[
	T_{\theta,v} := \left\{ (x,t) : \left|\frac{x}{\lambda} - \Phi\left(\frac{v}{\lambda}, \frac{t}{\lambda}; y_{\theta}\right)\right| \leq \frac{r_{n'}^{\frac{1}{2}+\sigma}}{\lambda}, \ t \in [0, r_{n'}] \right\},
	\]
	where $y_{\theta}$ is the center of $\theta$. Since the phase $\phi$ determines a constant $\varepsilon_\phi$, fixed throughout this section (see Theorem \ref{thm PWA}), we, for simplicity, treat $\varepsilon_\phi$ as unit scale. Accordingly, we rescale $r_{n'} \in (0,\varepsilon_\phi]$ to unit length.
	\[
	T_{\theta,v}' := \left\{ (x,t) : \left|\frac{r_{n'}x}{\lambda} - \Phi\left(\frac{r_{n'}v}{\lambda}, \frac{r_{n'}t}{\lambda}; y_{\theta}\right)\right| \leq \frac{r_{n'}^{\frac{1}{2}+\sigma}}{\lambda}, \ t \in [0,1] \right\}.
	\]
	Let $\lambda_{n'} = \frac{\lambda}{r_{n'}}$. Then equivalently,
	\[
	T_{\theta,v}' = \left\{ (x,t) : \left|x - \lambda_{n'}\Phi\left(\frac{v}{\lambda_{n'}}, \frac{t}{\lambda_{n'}}; y_{\theta}\right)\right| \leq r_{n'}^{-\frac{1}{2}+\sigma}, \ t \in [0,1] \right\}.
	\]
	We denote
	\[
	\Phi_{\lambda_{n'}}(v,t;y) := \lambda_{n'}\Phi\left(\frac{v}{\lambda_{n'}}, \frac{t}{\lambda_{n'}}; y\right).
	\]
	It is straightforward to verify that
	\begin{equation}\label{eq scalingcurve}
		\frac{v}{\lambda_{n'}} 
		= \nabla_{y}\phi_{\lambda_{n'}}\bigl(\Phi_{\lambda_{n'}}(v,t;y),\,t;\,y\bigr),
	\end{equation}
	where
	\begin{equation}\label{eq scalingphi}
		\phi_{\lambda_{n'}}(x,t;y) := \lambda_{n'}\phi\bigl(\tfrac{x}{\lambda_{n'}}, \tfrac{t}{\lambda_{n'}}; y\bigr).   
	\end{equation}
	
	For the rescaled phase $\phi_{\lambda_{n'}}$, we denote wave packets of unit length and width $\kappa$ by
	\[
	T_{y_{\theta},v,\Phi_{\lambda_{n'}}}(\kappa,1) := \left\{ (x,t) : \left|x - \Phi_{\lambda_{n'}}(v,t; y_{\theta})\right| \leq \kappa, \ t \in [0,1] \right\},
	\]
	{where $\theta$ denotes the square on $\mathbb{R}^{n-1}$ of side length $\kappa$ and $y_{\theta}$ denotes its center.}
	We have the following estimate, which can be regarded as a polynomial Wolff axiom for the rescaled phase.
	
	\begin{prop}\label{prop scalingPWA}
		Let $S \subset {B}^{n}$ be a semi-algebraic set of complexity at most $E$. Let 
		\[
		\mathbb{T} = \{\,T_{y_{\theta},v,\Phi_{\lambda_{n'}}}(\kappa,1)\}
		\]
		be a collection of tubes pointing in different directions. Then for any $\epsilon > 0$, there exists a constant $C_{\epsilon,E}$ such that
		\[
		\#\{\,T\in\mathbb{T}:T \subset S\}
		\le
		C_{\epsilon,E}\,\min\bigl\{\mathcal{L}^{n}(S)\,\lambda_{n'}^{l-(n-1)},\,
		\mathcal{L}^{n}\bigl(\mathcal{N}_{\tfrac{1}{\lambda_{n'}}}(S)\bigr)\bigr\}\,\kappa^{-(n-1)-\epsilon},
		\]
		where
		\begin{itemize}
			\item $\phi$ is a phase function with contact order at most $l$,
			\item $\mathcal{N}_{\tfrac{1}{\lambda_{n'}}}(S)$ denotes the $\tfrac{1}{\lambda_{n'}}$-neighborhood of $S$.
		\end{itemize}
	\end{prop}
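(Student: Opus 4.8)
The strategy is to transfer the polynomial Wolff axiom of Theorem \ref{thm PWA} through the rescaling $\phi \mapsto \phi_{\lambda_{n'}}$ in two complementary ways, producing the two terms of the minimum. The starting observation is that the tube $T_{y_\theta,v,\Phi_{\lambda_{n'}}}(\kappa,1)$, after undoing the parabolic rescaling $(x,t)\mapsto (x/\lambda_{n'},t/\lambda_{n'})$, becomes a curved $\delta$-tube for the \emph{original} phase $\phi$ with $\delta \sim \kappa/\lambda_{n'}$, supported over the $t$-interval $[0,1/\lambda_{n'}]$. So I would first record this dictionary precisely: a family $\mathbb{T}$ of tubes for $\phi_{\lambda_{n'}}$ pointing in different directions, contained in $S \subset \mathbb{B}^n$, corresponds bijectively to a family $\widetilde{\mathbb{T}}$ of curved $(\kappa/\lambda_{n'})$-tubes for $\phi$, pointing in different directions, each contained in the rescaled set $\widetilde S := \lambda_{n'}^{-1} S$, which has complexity $O(E)$ and Lebesgue measure $\lambda_{n'}^{-n}\mathcal{L}^n(S)$.

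For the second term $\mathcal{L}^n(\mathcal{N}_{1/\lambda_{n'}}(S))\,\kappa^{-(n-1)-\epsilon}$: apply Theorem \ref{thm PWA} directly (at scale $\delta = \kappa/\lambda_{n'}$, with the semialgebraic set $\widetilde S$, noting that tubes for $\phi$ have length comparable to $\varepsilon_0$ after a harmless further dilation, or run the argument at the truncated length $1/\lambda_{n'}$ using the same integral lower bound $\int|W(t)|\,dt \gtrsim 1$ that underlies Theorem \ref{thm PWA}, which is scale-invariant). This gives $\#\{T \subset S\} \lesssim_{\epsilon,E} \mathcal{L}^n(\widetilde S)\,(\kappa/\lambda_{n'})^{1-n-\epsilon} = \lambda_{n'}^{-n}\mathcal{L}^n(S)\,\kappa^{1-n-\epsilon}\lambda_{n'}^{n-1+\epsilon}$. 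To match the claimed form I would then absorb the factor $\lambda_{n'}^{-1+\epsilon}$ by passing from $\mathcal{L}^n(S)$ to $\mathcal{L}^n(\mathcal{N}_{1/\lambda_{n'}}(S))$: since each surviving tube has length $\gtrsim 1$ and width $\kappa \geq 1/\lambda_{n'}$ for the relevant range, thickening $S$ to its $1/\lambda_{n'}$-neighborhood changes the volume by at most a bounded factor \emph{unless} $S$ is thin in some direction, in which case the neighborhood is genuinely larger and compensates exactly the lost power of $\lambda_{n'}$; this is precisely the standard "pigeonholing onto a sub-tube / restricting to the neighborhood" trick used to pass between the two forms of the polynomial Wolff axiom, and I would cite the corresponding lemma in \cite{GWZ22} or \cite{KatzR} rather than reprove it.

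For the first term $\mathcal{L}^n(S)\,\lambda_{n'}^{l-(n-1)}\,\kappa^{-(n-1)-\epsilon}$: here one exploits the \emph{finite contact order $l$} quantitatively. Tracking the dependence on the $t$-length in the proof of Theorem \ref{thm PWA}, the determinant $W(t) = \det(U + \nabla_y^2\phi(\cdots))$ satisfies $W(t) = \det(U) + \sum_{k=1}^l H_k t^k/k! + o(t^l)$ with $\|(H_1,\dots,H_l)\|_1 \gtrsim 1 + \|C\|_1$, so on an interval of length $T \lesssim \varepsilon_0$ one gets $\int_{|t|\le T}|W(t)|\,dt \gtrsim T^{2l+1}$ (after a Remez/pigeonhole argument: a polynomial of degree $l$ that is $\gtrsim 1$ in size of some coefficient is $\gtrsim T^l$ on a subinterval of length $\gtrsim T$). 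Running the $\phi_{\lambda_{n'}}$ argument on $t \in [0,1]$ with the rescaled phase — equivalently, running the $\phi$ argument on $t \in [0,1/\lambda_{n'}]$ — the lower bound $\gtrsim (1/\lambda_{n'})^{2l+1}$ forces the count to lose a factor $\lambda_{n'}^{2l+1}$ relative to the full-length bound... so I would instead keep the computation at the $\phi_{\lambda_{n'}}$ scale, where $W_{\lambda_{n'}}(t)$ is a genuine degree-$l$-to-leading-order perturbation on the unit interval whose coefficients are $\gtrsim \lambda_{n'}^{-(l-1)}$-small relative to $\|C\|_1$ (from the $\lambda_{n'}^{-1}$ in each $t$-derivative of $\phi_{\lambda_{n'}}$), giving $\int_0^1 |W_{\lambda_{n'}}(t)|\,dt \gtrsim \lambda_{n'}^{-(l-1)}$ in the regime where the curvature term dominates; feeding this into the Guth–Wang–Zhang reduction yields the loss $\lambda_{n'}^{l-1}$ in the tube count, matching $\lambda_{n'}^{l-(n-1)}$ after accounting for the $(n-1)$ dimensions contributing $\lambda_{n'}^{-(n-1)}$ via the other factors. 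The main obstacle is getting this bookkeeping exactly right — pinning down the precise power of $\lambda_{n'}$ that emerges from the competition between $\det(U)$ (the "flat" direction) and the degree-$l$ correction, and confirming that the pigeonholing which produces the $o(t^l)$-error control is uniform in $U$ (equivalently in $M$). Once the $t$-integral lower bound is established with the sharp $\lambda_{n'}$-power, the passage to the tube-counting estimate is verbatim the argument of \cite[Theorem 3.1]{GWZ22}.
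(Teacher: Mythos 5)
Your treatment of the first term is essentially the paper's argument (reduce via \cite[Section 3]{GWZ22} to a lower bound on $\int\bigl|\det(M+\nabla_y^2\phi_{\lambda_{n'}})\bigr|\,dt$ and track how the contact-order expansion degrades under rescaling), but your bookkeeping does not close: the correct lower bound is $\gtrsim\lambda_{n'}^{\,n-1-l}$, obtained by pulling $\lambda_{n'}^{\,n-1}$ out of the $(n-1)\times(n-1)$ determinant (since $\nabla_y^2\phi_{\lambda_{n'}}(0,t;0)=\lambda_{n'}\nabla_y^2\phi(0,t/\lambda_{n'};0)$) and paying $\lambda_{n'}^{-l}$ for the $l$ powers of $t/\lambda_{n'}$ in $1+\sum_i c_i(t/\lambda_{n'})^i+o(t^l)$; this feeds directly into the reduction and gives the exponent $l-(n-1)$ with no further adjustment. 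Your claimed bound $\int_0^1|W_{\lambda_{n'}}|\gtrsim\lambda_{n'}^{-(l-1)}$ and the subsequent attempt to recover $l-(n-1)$ "via the other factors" do not reconcile (you would need a factor $\lambda_{n'}^{-(n-2)}$, not $\lambda_{n'}^{-(n-1)}$), and you yourself flag this as unresolved.

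The second term is where there is a genuine gap. Undoing the rescaling turns each tube into a curved tube for $\phi$ of width $\kappa/\lambda_{n'}$ but of length only $1/\lambda_{n'}$, and Theorem \ref{thm PWA} is a statement about unit-length $\delta$-tubes; its integral lower bound $\int_{|t|\le\varepsilon_0}|W(t)|\,dt\gtrsim1$ is emphatically not scale-invariant (restricting to $|t|\le 1/\lambda_{n'}$ can lose up to $\lambda_{n'}^{-(l+1)}$, which is exactly what the first term quantifies), and "a harmless further dilation" to unit length just reproduces $\phi_{\lambda_{n'}}$, so the step is circular. Indeed the intermediate bound you derive, $\#\{T\subset S\}\lesssim\lambda_{n'}^{-1+\epsilon}\mathcal{L}^n(S)\kappa^{1-n-\epsilon}$, is false: take $S$ to be a single $\kappa$-tube of the rescaled phase, so $\mathcal{L}^n(S)\sim\kappa^{n-1}$ and your bound would force the count below $1$, yet $S$ contains a tube. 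The subsequent "absorb the factor by thickening to $\mathcal{N}_{1/\lambda_{n'}}(S)$" is not an argument and there is no such lemma to cite. The paper proves the second bound by a different mechanism: write $\phi$ in normal form, set $\tilde\phi_{\lambda_{n'}}(x,t;y)=x\cdot y+t\,y^TAy+t\,Q(y)$ (translation-invariant, standard Kakeya form, for which the polynomial Wolff axiom holds with no loss), observe that since $\tilde\psi=O(|(x,t)|^2|y|^2)$ the curves of $\phi_{\lambda_{n'}}$ and $\tilde\phi_{\lambda_{n'}}$ differ by $O(1/\lambda_{n'})$, and then count the corresponding $\tilde\phi_{\lambda_{n'}}$-tubes inside $\mathcal{N}_{1/\lambda_{n'}}(S)$. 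The $1/\lambda_{n'}$-neighborhood in the statement is the approximation error from this comparison, not a volume-bookkeeping device, and this comparison idea is the missing ingredient in your proposal.
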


	\begin{proof}
		Let us first prove the upper bound
		\[
		\#\{T\in \mathbb{T}:T\subset S\}\le C_{\epsilon,E}\,\mathcal{L}^n(S)\,\lambda_{n'}^{l-(n-1)}\,\kappa^{-(n-1)-\epsilon}.
		\]
		By the argument in \cite[Section 3]{GWZ22}, it suffices to show
		\begin{equation}\label{eq reduce}
			\int_{|t|\le\varepsilon_{\phi}}\Bigl|\det\bigl(\nabla_{v}\Phi_{\lambda_{n'}}(v,t;y)\cdot M+\nabla_{y}\Phi_{\lambda_{n'}}(v,t;y)\bigr)\Bigr|\,dt\gtrsim\lambda_{n'}^{n-1-l},
		\end{equation}
		uniformly in $v\in{B}_{\varepsilon_{\phi}}^{n-1}$, $y\in{B}_{\varepsilon_{\phi}}^{n-1}$, and all $(n-1)\times(n-1)$ matrices $M$. Combining this with \eqref{eq scalingcurve} and \eqref{eq scalingphi}, it is equivalent to
		\begin{equation}\label{eq reduce2}
			\int_{|t|\le\varepsilon_{\phi}}\bigl|\det\bigl(M+\nabla_{y}^{2}\phi_{\lambda_{n'}}\bigl(\Phi_{\lambda_{n'}}(v,t;y),t;y\bigr)\bigr)\bigr|\,dt\gtrsim\lambda_{n'}^{n-1-l},
		\end{equation}
		under the same uniformity conditions. Without loss of generality, it suffices to prove \eqref{eq reduce} for $v=0$ and $y=0$, since the contact order condition is stable under small perturbations. Using $\Phi_{\lambda_{n'}}(0,t;0)=0$ for $|t|\le\varepsilon_{\phi}$, \eqref{eq reduce2} simplifies to
		\[
		\int_{|t|\le\varepsilon_{\phi}}\bigl|\det\bigl(M+\nabla_{y}^{2}\phi_{\lambda_{n'}}(0,t;0)\bigr)\bigr|\,dt\gtrsim\lambda_{n'}^{n-1-l}.
		\]
		Since 
		\[
		\nabla_{y}^{2}\phi_{\lambda_{n'}}(0,t;0)
		=\lambda_{n'}\,\nabla_{y}^{2}\phi\Bigl(0,\tfrac{t}{\lambda_{n'}};0\Bigr),
		\]
		we compute:
		\begin{align*}
			\det\bigl(M+\nabla_{y}^{2}\phi_{\lambda_{n'}}(0,t;0)\bigr)
			&=\det\Bigl(M+\lambda_{n'}\nabla_{y}^{2}\phi\bigl(0,\tfrac{t}{\lambda_{n'}};0\bigr)\Bigr)\\
			&=\det\Bigl(M+\lambda_{n'}\nabla_{y}^{2}\phi(0,0;0)
			+\lambda_{n'}\nabla_{y}^{2}\phi_{0}\bigl(0,\tfrac{t}{\lambda_{n'}},0\bigr)\Bigr)\\
			&=\lambda_{n'}^{n-1}\Bigl(1+\sum_{i=1}^{l}c_{i}\bigl(\tfrac{t}{\lambda_{n'}}\bigr)^{i}+o\bigl(t^{l}\bigr)\Bigr)\\
			&\ge\lambda_{n'}^{n-1-l}\Bigl(1+\sum_{i=1}^{l}c_{i}\,t^{i}+o\bigl(t^{l}\bigr)\Bigr)
			\gtrsim\lambda_{n'}^{n-1-l}.
		\end{align*}
		This proves the first upper bound.
		
		Now we prove the second upper bound. By the standard reduction in \cite{Hor73}, it suffices to consider phases of the form
		\[
		\phi(x,t;y)=x\cdot y+t\,y^{T}Ay+\psi(x,t;y),
		\]
		where 
		\[
		\psi(x,t;y)=O\bigl(|t|\,|y|^{3}+|(x,t)|^{2}\,|y|^{2}\bigr).
		\]
		Write $\psi(x,t;y)=t\,Q(y)+\tilde{\psi}(x,t;y)$, where $Q(y)=O(|y|^{3})$ and 
		\[
		\tilde{\psi}(x,t;y)=O\bigl(|(x,t)|^{2}\,|y|^{2}\bigr).
		\]
		Then the rescaled phase is
		\[
		\phi_{\lambda_{n'}}(x,t;y)
		=x\cdot y+t\,y^{T}Ay+t\,Q(y)
		+\lambda_{n'}\,\tilde{\psi}\Bigl(\tfrac{x}{\lambda_{n'}},\tfrac{t}{\lambda_{n'}};y\Bigr).
		\]
		Define 
		\[
		\tilde{\phi}_{\lambda_{n'}}(x,t;y)=x\cdot y+t\,y^{T}Ay+t\,Q(y).
		\]
		For $\kappa<1$, set
		\[
		\tilde{T}_{y_{\theta},v,\Phi_{\lambda_{n'}}}(\kappa,1)
		=\bigl\{(x,t):\;|\nabla_{y}\tilde{\phi}_{\lambda_{n'}}(x,t;y_{\theta})-v|\le\kappa,\;t\in[0,1]\bigr\}.
		\]
		If $(x,t)$ satisfies
		\[
		\nabla_{y}\phi_{\lambda_{n'}}(x,t;y_{\theta})=v
		\quad\text{and}\quad
		(x',t)\text{ satisfies }
		\nabla_{y}\tilde{\phi}_{\lambda_{n'}}(x',t;y_{\theta})=v,
		\]
		then
		\[
		\nabla_{y}\phi_{\lambda_{n'}}(x,t;y_{\theta})
		-\nabla_{y}\phi_{\lambda_{n'}}(x',t;y_{\theta})
		+\nabla_{y}\phi_{\lambda_{n'}}(x',t;y_{\theta})
		-\nabla_{y}\tilde{\phi}_{\lambda_{n'}}(x',t;y_{\theta})=0.
		\]
		Since $\phi_{\lambda_{n'}}$ satisfies H\"ormander’s nondegeneracy, 
		\[
		|\nabla_{y}\phi_{\lambda_{n'}}(x,t;y_{\theta})
		-\nabla_{y}\phi_{\lambda_{n'}}(x',t;y_{\theta})|\sim|x-x'|,
		\]
		and
		\[
		|\nabla_{y}\phi_{\lambda_{n'}}(x',t;y_{\theta})
		-\nabla_{y}\tilde{\phi}_{\lambda_{n'}}(x',t;y_{\theta})|
		\lesssim\lambda_{n'}\cdot\tfrac{1}{\lambda_{n'}^{2}}=\tfrac{1}{\lambda_{n'}}.
		\]
		Hence for any 
		$T\in T_{y_{\theta},v,\Phi_{\lambda_{n'}}}(\kappa,1)$, there is 
		$\tilde{T}\in\tilde{T}_{y_{\theta},v,\Phi_{\lambda_{n'}}}(\kappa,1)$ such that 
		$T$ lies in the $\tfrac{1}{\lambda_{n'}}$‐neighborhood of $\tilde{T}$.
		
		For $\tilde{\phi}_{\lambda_{n'}}$, tubes pointing in different directions satisfy the polynomial Wolff axioms.
		
		\begin{claim}
			Let $S\subset{B}^n$ be a semi‐algebraic set of complexity at most $E$. Let 
			\[
			\tilde{\mathbb{T}}
			=\bigl\{\tilde{T}_{y_{\theta},v,\Phi_{\lambda_{n'}}}(\kappa,1)\bigr\}
			\]
			be a collection of tubes pointing in different directions. Then for any $\epsilon>0$, there is $C_{\epsilon,E}$ so that
			\[
			\#\{\tilde{T}\in\tilde{\mathbb{T}}:\tilde{T}\subset S\}
			\le C_{\epsilon,E}\,\mathcal{L}^n(S)\,\kappa^{-(n-1)-\epsilon}.
			\]
		\end{claim}
		\begin{proof}[Proof of Claim]
			Since $\tilde{\phi}_{\lambda_{n'}}$ is of the standard form \eqref{eq standard} without scaling, the polynomial Wolff axiom holds automatically.
		\end{proof}
		
		Since each $T\in T_{y_{\theta},v,\Phi_{\lambda_{n'}}}(\kappa,1)$ lies in a  
		$\tfrac{1}{\lambda_{n'}}$‐neighborhood of some $\tilde{T}\in\tilde{\mathbb{T}}(\kappa,1)$, we conclude
		\[
		\#\{T\in\mathbb{T}:T\subset S\}
		\le\#\{\tilde{T}\in\tilde{\mathbb{T}}:\tilde{T}\subset\mathcal{N}_{\tfrac{1}{\lambda_{n'}}}(S)\}
		\lesssim_{\epsilon,E}\mathcal{L}^{n}\bigl(\mathcal{N}_{\tfrac{1}{\lambda_{n'}}}(S)\bigr)\,\kappa^{-(n-1)-\epsilon}.
		\]
		This proves the second upper bound.
	\end{proof}
	Next, we bound the $n'$‐dimensional norm arising in the polynomial‐partitioning argument under the finite contact order assumption, yielding an improvement over the general positive‐definite case.  
	
	\begin{lemma}\label{lem improvement}
		Assume that the phase $\phi$ has contact order at most $l$. For every $\mathcal{S}_{n'}\in\mathcal{G}_{n'}$ and $r_{n'}=\mathcal{G}_{n'}$, we have
		\[
		\|f_{\nu,\mathcal{S}_{n'}}^{\star}\|_{2}^{2}
		\lesssim_{\epsilon}
		\min\bigl\{\lambda_{n'}^{l-(n-1)}\,r_{n'}^{-\frac{n-n'}{2}},\;(r_{n'}^{-\frac{1}{2}}+\tfrac{r_{n'}}{\lambda})^{n-n'}\bigr\}
		\,r_{n'}^{\sigma_{0}}\,
		\|f\|_{\infty}^{2},
		\]
		where $l$ is the contact order of $\phi$.
	\end{lemma}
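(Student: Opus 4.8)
The plan is to combine the rescaled polynomial Wolff axiom of Proposition \ref{prop scalingPWA} with an $L^{2}$-orthogonality decomposition of the wave packets comprising $f_{\nu,\mathcal{S}_{n'}}^{\star}$. After normalizing $\|f\|_{\infty}=1$, I recall that $f_{\nu,\mathcal{S}_{n'}}^{\star}=\sum_{T\in\mathbb{T}[B_{r_{n'}}]}f_{T}$, where the tubes of $\mathbb{T}[B_{r_{n'}}]$ are quasi-disjoint, of length $r_{n'}$ and width $r_{n'}^{1/2+\sigma_{n'}}$, and contained in $\mathcal{S}_{n'}=B_{r_{n'}}\cap\mathcal{N}_{r_{n'}^{1/2+\sigma_{n'}}}(S_{n'})$ with $S_{n'}$ an $n'$-dimensional transverse complete intersection of bounded degree. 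I would group the tubes by their direction $\theta$ (an $r_{n'}^{-1/2}$-cube) into sub-families $\mathbb{T}_{\theta}$, so that the pieces $g_{\theta}:=\sum_{T\in\mathbb{T}_{\theta}}f_{T}$ are supported in finitely overlapping fattenings of the $\theta$'s and hence $\|f_{\nu,\mathcal{S}_{n'}}^{\star}\|_{2}^{2}\lesssim\sum_{\theta}\|g_{\theta}\|_{2}^{2}$. Moreover, quasi-disjointness forces the frequency parameters $v$ of distinct wave packets in a common $\mathbb{T}_{\theta}$ to be $\gtrsim r_{n'}^{1/2+\sigma_{n'}}$-separated, far beyond the uncertainty scale $r_{n'}^{1/2}$, so their pairwise $L^{2}$ inner products decay rapidly and $\|g_{\theta}\|_{2}^{2}\lesssim\sum_{T\in\mathbb{T}_{\theta}}\|f_{T}\|_{2}^{2}$.

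The key step will be to collapse the per-direction sum \emph{before} counting tubes. Since $\sum_{v}\|f_{\theta,v}\|_{2}^{2}\lesssim\|f\eta_{\theta}\|_{2}^{2}\le\|f\|_{\infty}^{2}|\theta|\sim r_{n'}^{-(n-1)/2}\|f\|_{\infty}^{2}$, for every occupied direction $\theta$ we get $\sum_{T\in\mathbb{T}_{\theta}}\|f_{T}\|_{2}^{2}\lesssim r_{n'}^{-(n-1)/2}\|f\|_{\infty}^{2}$, and therefore
\[
\|f_{\nu,\mathcal{S}_{n'}}^{\star}\|_{2}^{2}\ \lesssim\ \#\{\text{occupied directions}\}\cdot r_{n'}^{-(n-1)/2}\,\|f\|_{\infty}^{2}.
\]
This trades the (possibly much larger) tube count $\#\mathbb{T}[B_{r_{n'}}]$ for the number of distinct directions present, which is exactly what the polynomial Wolff axiom controls.

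To bound the number of occupied directions, I would pick one tube from each $\mathbb{T}_{\theta}$; these point in distinct directions and lie in $\mathcal{S}_{n'}$. Rescaling $B_{r_{n'}}$ to the unit ball sends them to unit-length tubes of width $\kappa:=r_{n'}^{-1/2+\sigma_{n'}}$ for the rescaled phase $\phi_{\lambda_{n'}}$, and sends $\mathcal{S}_{n'}$ to $S:=\mathbb{B}^{n}\cap\mathcal{N}_{\kappa}(\widetilde S_{n'})$ with $\widetilde S_{n'}$ an $n'$-dimensional variety of bounded degree; the standard neighborhood-volume bound for algebraic varieties gives $\mathcal{L}^{n}(S)\lesssim\kappa^{n-n'}$ and $\mathcal{L}^{n}(\mathcal{N}_{1/\lambda_{n'}}(S))\lesssim(\kappa+\lambda_{n'}^{-1})^{n-n'}=(r_{n'}^{-1/2+\sigma_{n'}}+r_{n'}/\lambda)^{n-n'}$, since $\lambda_{n'}^{-1}=r_{n'}/\lambda$. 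Proposition \ref{prop scalingPWA} (with complexity $E=O_{\epsilon}(1)$ and a sufficiently small free exponent) then yields, up to an $r_{n'}^{O(\sigma_{n'})}$ factor coming from comparing the $\kappa$- and $r_{n'}^{-1/2}$-direction scales,
\[
\#\{\text{occupied directions}\}\ \lesssim_{\epsilon}\ \min\bigl\{\kappa^{\,n-n'}\,\lambda_{n'}^{\,l-(n-1)},\ (r_{n'}^{-1/2}+r_{n'}/\lambda)^{n-n'}\bigr\}\,\kappa^{-(n-1)-\epsilon}.
\]
Plugging this into the previous display, and using $\kappa^{n-n'}=r_{n'}^{-(n-n')/2}r_{n'}^{O(\sigma_{n'})}$ and $\kappa^{-(n-1)-\epsilon}r_{n'}^{-(n-1)/2}\lesssim r_{n'}^{O(\sigma_{n'})+\epsilon}$, the factors $r_{n'}^{\pm(n-1)/2}$ cancel and I get
\[
\|f_{\nu,\mathcal{S}_{n'}}^{\star}\|_{2}^{2}\ \lesssim_{\epsilon}\ \min\bigl\{\lambda_{n'}^{\,l-(n-1)}\,r_{n'}^{-\frac{n-n'}{2}},\ (r_{n'}^{-1/2}+r_{n'}/\lambda)^{n-n'}\bigr\}\,r_{n'}^{O(\sigma_{n'})+\epsilon}\,\|f\|_{\infty}^{2}.
\]
Choosing the free exponent in Proposition \ref{prop scalingPWA} small enough that all admissible errors are absorbed into $r_{n'}^{\sigma_{0}}$ (legitimate because $\sigma_{n'}\ll\sigma_{0}$) completes the proof.

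The main obstacle is precisely this organization. The family $\mathbb{T}[B_{r_{n'}}]$ is only quasi-disjoint, so it can contain on the order of $r_{n'}^{(n'-1)/2}$ parallel tubes inside the thin neighborhood $\mathcal{S}_{n'}$ of the $n'$-dimensional variety; estimating $\|f_{\nu,\mathcal{S}_{n'}}^{\star}\|_{2}^{2}$ crudely by (number of tubes)$\,\times\,$(single wave-packet $L^{2}$-norm) would be far too lossy and would not even recover the trivial bound in the statement. It is essential to first dispose of the per-direction multiplicity via $\sum_{v}\|f_{\theta,v}\|_{2}^{2}\lesssim\|f\eta_{\theta}\|_{2}^{2}$, reducing the whole estimate to a count of occupied directions, and only then to apply the polynomial Wolff axiom — whose two alternative right-hand sides give exactly the minimum in the statement, the first exploiting the finite contact order and the second being the general variety-volume bound.
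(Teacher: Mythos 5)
Your proposal is correct and follows essentially the same route as the paper: the paper likewise collapses the per-direction multiplicity first (via the frequency decomposition $h=\sum_\theta h_\theta$ and local orthogonality, giving each occupied $\theta$ a contribution $\lesssim \mathcal{L}^{n-1}(\theta)\,\|f\|_\infty^2$), and then counts the occupied directions by applying Proposition \ref{prop scalingPWA} with $\kappa=r_{n'}^{-1/2+\sigma}$ to the rescaled set $\tilde{\mathcal{S}}_{n'}$, whose two alternatives yield exactly the minimum in the statement. Your bookkeeping of the volumes $\mathcal{L}^{n}(\tilde{\mathcal{S}}_{n'})\sim r_{n'}^{-\frac{n-n'}{2}+\sigma_{n'}}$ and $\mathcal{L}^{n}(\mathcal{N}_{1/\lambda_{n'}}(\tilde{\mathcal{S}}_{n'}))$, and the absorption of the $r_{n'}^{O(\sigma_{n'})}$ losses into $r_{n'}^{\sigma_0}$, matches the paper's computation.
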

	\begin{proof}
		We use Proposition \ref{prop scalingPWA} to prove this lemma. Since Proposition \ref{prop scalingPWA} describes properties of wave packets with unit length, we perform an appropriate rescaling first.
		
		Denote by $\tilde{\mathcal{S}}_{n'}$ the rescaled version of the semi‐algebraic set $\mathcal{S}_{n'}$ under scaling factor $r_{n'}$. This ensures that $\tilde{\mathcal{S}}_{n'}$ has unit scale in the tangent direction and scale $r_{n'}^{-\frac{1}{2}+\sigma_{n'}}$ in the normal direction. Then
		\[
		\mathcal{L}^{n}(\tilde{\mathcal{S}}_{n'})=r_{n'}^{-\frac{n-n'}{2}+\sigma_{n'}}.
		\]
		Let $\theta$ denote a frequency ball of radius $r_{n'}^{-\frac{1}{2}}$. Write $h=f_{\nu,\mathcal{S}_{n'}}^{\star}$ and perform wave packet decomposition
		\[
		h=\sum_{\theta}h_{\theta}.
		\]
		By local orthogonality,
		\begin{equation}\label{eq h}
			\|h\|_{2}^{2}
			\lesssim\sum_{\theta}\int_{\theta}\lvert h_{\theta}\rvert^{2}
			\lesssim\mathcal{L}^{n-1}(\theta)\,\#\bigl\{\theta:\,\text{there exists }v\text{ such that }T_{y_{\theta},v,\phi_{\lambda_{n'}}}\subset\tilde{\mathcal{S}}_{n'}\bigr\}\,\|f\|_{\infty}^{2}.
		\end{equation}
		Apply Proposition \ref{prop scalingPWA} with 
		\[
		\kappa=r_{n'}^{-\frac{1}{2}+\sigma},\quad
		S=\tilde{\mathcal{S}}_{n'}.
		\]
		Then
		\begin{equation}\label{eq theta}
			\begin{aligned}
				&\#\bigl\{\theta:\,\exists\,v\text{ s.t.\ }T_{y_{\theta},v,\phi_{\lambda_{n'}}}\subset\tilde{\mathcal{S}}_{n'}\bigr\}
				\;\le\;
				\#\{T\in\mathbb{T}:T\subset S\}\\
				&\le C_{\epsilon,E}\,
				\min\Bigl\{\mathcal{L}^{n}(\tilde{\mathcal{S}}_{n'})\,\lambda_{n'}^{l-(n-1)},\;
				\mathcal{L}^{n}\bigl(\mathcal{N}_{\,\tfrac{1}{\lambda_{n'}}}(\tilde{\mathcal{S}}_{n'})\bigr)\Bigr\}
				\,r_{n'}^{\frac{n-1}{2}+\sigma}\\
				&=
				C_{\epsilon,E}
				\min\Bigl\{\lambda_{n'}^{l-(n-1)}\,r_{n'}^{-\frac{n-n'}{2}+\sigma_{n'}},\;
				\bigl(r_{n'}^{-\frac{1}{2}+\sigma_{n'}}+\tfrac{1}{\lambda_{n'}}\bigr)^{n-n'}\Bigr\}\,
				r_{n'}^{\frac{n-1}{2}+\sigma}.
			\end{aligned}
		\end{equation}
		Combining \eqref{eq h} and \eqref{eq theta}, we obtain
		\[
		\|f_{\nu,\mathcal{S}_{n'}}^{\star}\|_{2}^{2}
		\lesssim_{\epsilon}
		\min\bigl\{\lambda_{n'}^{l-(n-1)}\,r_{n'}^{-\frac{n-n'}{2}},\;(r_{n'}^{-\frac{1}{2}}+\tfrac{r_{n'}}{\lambda})^{n-n'}\bigr\}
		\,r_{n'}^{\sigma_{0}}\,
		\|f\|_{\infty}^{2}.
		\]
	\end{proof}
	We are now ready to complete the proof via the polynomial method. In order to apply polynomial partitioning, we compute a coefficient in Lemma \ref{lem improvement} that is uniformly valid under our finite contact order hypothesis.

	If $r_{n'} \le \lambda^{\frac{2}{3}}$, then
	\[
	\|f_{\nu,\mathcal{S}_{n'}}^{\star}\|_{2}^{2}
	\lesssim_{\epsilon} r_{n'}^{-\frac{n-n'}{2}}\,
	r_{n'}^{\sigma_{0}}\,
	\|f\|_{\infty}^{2}.
	\]
	For $r_{n'} \ge \lambda^{\frac{2}{3}}$, the estimate
	\begin{equation}\label{eq improvement}
		\|f_{\nu,\mathcal{S}_{n'}}^{\star}\|_{2}^{2}
		\lesssim_{\epsilon} r_{n'}^{-\frac{(n-n')^{2}}{2(l-n+1)}}\,
		r_{n'}^{\sigma_{0}}\,
		\|f\|_{\infty}^{2}
	\end{equation}
	holds. Note that \eqref{eq improvement} in fact holds for all values of $r_{n'}$.
	
	For general H\"ormander positive definite phases, we recall the estimate for $f$ in the polynomial decomposition of dimension $n'$, where $m+1 \le n' \le n-1$. According to Proposition \ref{prop Property 3},
	\begin{equation}\label{eq general}
		\begin{aligned}
			\max_{\mathcal{S}_{n'}\in\mathcal{G}_{n'}}
			\|f_{\nu,\mathcal{S}_{n'}}^{\star}\|_{2}^{2}
			&\lesssim_{\epsilon}R^{O(\sigma)} 
			\Bigl(\frac{r_{n'+1}}{r_{n'}}\Bigr)^{-\frac{n-n'-1}{2}}
			D_{n'}^{-\,n'+\sigma}
			\max_{S_{n'+1}\in\mathcal{G}_{n'+1}}
			\|f_{\nu,S_{n'+1}}^{\star}\|_{2}^{2}\\
			&\lesssim_{\epsilon}R^{O(\sigma)}
			\prod_{i=n'}^{n-1}\Bigl(\frac{r_{i+1}}{r_{i}}\Bigr)^{-\frac{n-i-1}{2}}
			\prod_{i=n'}^{n-1}D_{i}^{-\,i+\sigma}\,
			\|f\|_{2}^{2}\\
			&\lesssim_{\epsilon}R^{O(\sigma)}
			r_{n'}^{\frac{n-n'}{2}}
			\prod_{i=n'}^{n-1}\bigl(r_{i}^{-\frac{1}{2}}\,D_{i}^{-\,i+\sigma}\bigr)\,
			\|f\|_{\infty}^{2}.
		\end{aligned}
	\end{equation}
	Set $\gamma\in[0,1]$ to be determined, and take the geometric average of \eqref{eq improvement} and \eqref{eq general}:
	\begin{equation}\label{eq average}
		\|f_{\nu,\mathcal{S}_{n'}}^{\star}\|_{2}^{2}
		\lesssim_{\epsilon}
		r_{n'}^{-\frac{(n-n')^{2}}{2(l-n'+1)}(1-\gamma)}
		\,r_{n'}^{\frac{n-n'}{2}\,\gamma}
		\prod_{i=n'}^{n-1}
		\bigl(r_{i}^{-\frac{\gamma}{2}}\,D_{i}^{-\,i\gamma+\gamma\sigma}\bigr)
		\,r_{n'}^{\sigma_{0}}
		\,\|f\|_{\infty}^{2}.
	\end{equation}
	Finally, applying the polynomial partitioning algorithm together with \eqref{eq average}, we complete the proof of Proposition \ref{prop Broad}. By Proposition \ref{prop Property 1},
	\begin{equation}\label{eq r1}
		\|\mathcal{T}^{\lambda}f\|_{BL_{k,A}^{p}(B_{R})}
		\lesssim_{\epsilon}R^{O(\sigma)}
		\prod_{i=m-1}^{n-1}
		\Bigl(r_{i}^{\frac{\beta_{i+1}-\beta_{i}}{2}}
		\,D_{i}^{\frac{\beta_{i+1}}{2}
			-\bigl(\frac{1}{2}-\frac{1}{p_{n}}\bigr)}\Bigr)\,
		\|f\|_{2}^{\frac{2}{p_{n}}}
		\max_{O\in\mathbf{n}_{l_{0}}^{\star}}
		\|f_{\nu,O}\|_{2}^{1-\frac{2}{p_{n}}}.
	\end{equation}
	By repeatedly applying Proposition \ref{prop Property 3}, we get
	\begin{equation}\label{eq r2}
		\max_{O\in\mathbf{n}_{l_{0}}^{\star}}
		\|f_{\nu,O}\|_{2}^{2}
		\lesssim_{\epsilon}R^{O(\sigma)}
		r_{n'}^{-\frac{n-n'}{2}}
		\prod_{i=m-1}^{n'-1}
		\bigl(r_{i}^{-\frac{1}{2}}\,D_{i}^{-\,i+\sigma}\bigr)
		\max_{\mathcal{S}_{n'}\in\mathcal{G}_{n'}}
		\|f_{\nu,\mathcal{S}_{n'}}^{\star}\|_{2}^{2}.
	\end{equation}
	Combining \eqref{eq r1} and \eqref{eq r2}, we have
	\begin{equation}\label{eq 1and2}
		\begin{aligned}
			\|\mathcal{T}^{\lambda}f\|_{BL_{k,A}^{p}(B_{R})}&
			\lesssim_{\epsilon}R^{O(\sigma)}
			\prod_{i=m-1}^{n-1}
			\Bigl(r_{i}^{\frac{\beta_{i+1}-\beta_{i}}{2}}
			\,D_{i}^{\frac{\beta_{i+1}}{2}
				-\bigl(\frac{1}{2}-\frac{1}{p_{n}}\bigr)}\Bigr)
			\,r_{n'}^{-\frac{n-n'}{2}\bigl(\frac{1}{2}-\frac{1}{p_{n}}\bigr)}\\
			&\quad\times
			\prod_{i=m-1}^{n'-1}
			\bigl(r_{i}^{-\frac{1}{2}
				\bigl(\frac{1}{2}-\frac{1}{p_{n}}\bigr)}\,D_{i}^{-\,i\bigl(\frac{1}{2}-\frac{1}{p_{n}}\bigr)+\sigma}\bigr)\,
			\|f\|_{2}^{\frac{2}{p_{n}}}
			\max_{\mathcal{S}_{n'}\in\mathcal{G}_{n'}}
			\|f_{\nu,\mathcal{S}_{n'}}^{\star}\|_{2}^{1-\frac{2}{p_{n}}}.
		\end{aligned}
	\end{equation}
	
	We apply \eqref{eq average} with fixed $n'$ in the polynomial partitioning algorithm. The gain arises at the $n'$‐dimensional step compared to standard polynomial methods. Taking any $n'$ already improves exponents over \cite{GHI}. Without loss of generality, set $n'=n-1$.
	
	To establish the broad estimate for $k=\frac{n+1}{2}$, consider the intersection of $p$‐ranges for all $m\ge k$. Since increasing $m$ reduces iteration count while widening the admissible $p$‐range, it suffices to take $m=\frac{n+1}{2}$.
	
	Define
	\[
	\begin{aligned}
		\tilde C_{1}
		&=\prod_{i=m-1}^{n-1}
		r_{i}^{\frac{\beta_{i+1}-\beta_{i}}{2}}
		\,D_{i}^{\frac{\beta_{i+1}}{2}
			-\bigl(\tfrac{1}{2}-\tfrac{1}{p_{n}}\bigr)},\\
		\tilde C_{2}
		&=r_{n-1}^{-\frac{1}{2}\bigl(\tfrac{1}{2}-\tfrac{1}{p_{n}}\bigr)}
		\prod_{i=m-1}^{n-2}
		r_{i}^{-\frac{1}{2}\bigl(\tfrac{1}{2}-\tfrac{1}{p_{n}}\bigr)}
		\,D_{i}^{-\,i\bigl(\tfrac{1}{2}-\tfrac{1}{p_{n}}\bigr)},\\
		\tilde C_{3}
		&=r_{n-1}^{-\frac{1-\gamma}{2(l-n+2)}
			\bigl(\tfrac{1}{2}-\tfrac{1}{p_{n}}\bigr)}
		\,D_{n-1}^{-\, (n-1)\,\gamma\bigl(\tfrac{1}{2}-\tfrac{1}{p_{n}}\bigr)}.
	\end{aligned}
	\]
	Using \eqref{eq 1and2} and \eqref{eq average}, we get
	\[
	\|\mathcal{T}^{\lambda}f\|_{BL_{k,A}^{p}(B_{R})}
	\lesssim_{\epsilon}
	\tilde C_{1}\,
	\tilde C_{2}\,
	\tilde C_{3}\,
	R^{C_{p}\,\sigma_{0}}\,
	\|f\|_{L^{2}}^{\frac{2}{p}}\,
	\|f\|_{L^{\infty}}^{1-\frac{2}{p}},
	\]
	where $C_{p}$ depends on $p$. Now we compute $\tilde{C} := \tilde{C}_{1} \tilde{C}_{2} \tilde{C}_{3}$. Since the term $R^{C_{p}\sigma_{0}}$ can be bounded by $R^{O(\epsilon)}$, we focus only on the product $\tilde{C}_{1} \tilde{C}_{2} \tilde{C}_{3}$.
	
	\begin{align*}
		\tilde C
		&=\tilde C_{1}\,\tilde C_{2}\,\tilde C_{3}
		\;\lesssim_{\epsilon}\;
		\prod_{i=m-1}^{n-2}
		r_{i}^{\frac{\beta_{i+1}-\beta_{i}}{2}
			-\frac{1}{2}\bigl(\tfrac{1}{2}-\tfrac{1}{p}\bigr)}
		\,D_{i}^{\frac{\beta_{i+1}}{2}
			-(i+1)\bigl(\tfrac{1}{2}-\tfrac{1}{p}\bigr)}
		\,r_{n-1}^{\frac{1-\beta_{n-1}}{2}}\\
		&\quad\times
		D_{n-1}^{\frac{1}{2}
			-\bigl(\tfrac{1}{2}-\tfrac{1}{p}\bigr)}
		\,r_{n-1}^{-\Bigl[\frac{1-\gamma}{2(l-n+2)}+\tfrac{1}{2}\Bigr]
			\bigl(\tfrac{1}{2}-\tfrac{1}{p}\bigr)}
		\,D_{n-1}^{-\, (n-1)\,\gamma\bigl(\tfrac{1}{2}-\tfrac{1}{p}\bigr)}.
	\end{align*}
	Choose $\beta_{m}\in[0,1]$ such that
	\[
	\frac{\beta_{m}}{2}
	- m\Bigl(\frac{1}{2}-\frac{1}{p}\Bigr)\ge 0.
	\]
	Since $r_{m-1}=1$, from \textbf{Output 5} with $i=m-1$ we get
	\begin{equation}\label{eq 5}
		D_{m-1}\le r_{m}.
	\end{equation}
	Using \eqref{eq 5} yields
	\begin{align*}
		\tilde C
		&\lesssim_{\epsilon} 
		r_{m}^{\frac{\beta_{m+1}}{2}
			-\bigl(\tfrac{1}{2}+m\bigr)\bigl(\tfrac{1}{2}-\tfrac{1}{p}\bigr)}
		\,D_{m}^{\frac{\beta_{m+1}}{2}
			-(m+1)\bigl(\tfrac{1}{2}-\tfrac{1}{p}\bigr)}
		\prod_{i=m+1}^{n-2}
		r_{i}^{\frac{\beta_{i+1}-\beta_{i}}{2}
			-\frac{1}{2}\bigl(\tfrac{1}{2}-\tfrac{1}{p}\bigr)}
		\,D_{i}^{\frac{\beta_{i+1}}{2}
			-(i+1)\bigl(\tfrac{1}{2}-\tfrac{1}{p}\bigr)}\\
		&\quad\times
		r_{n-1}^{\frac{1-\beta_{n-1}}{2}}
		\,D_{n-1}^{\frac{1}{2}
			-\bigl(\tfrac{1}{2}-\tfrac{1}{p}\bigr)}
		\,r_{n-1}^{-\Bigl[\frac{1-\gamma}{2(l-n+2)}+\tfrac{1}{2}\Bigr]
			\bigl(\tfrac{1}{2}-\tfrac{1}{p}\bigr)}
		\,D_{n-1}^{-\, (n-1)\,\gamma\bigl(\tfrac{1}{2}-\tfrac{1}{p}\bigr)}.
	\end{align*}
	Choose $\beta_{m+1}\in[0,1]$ so that
	\[
	\frac{\beta_{m+1}}{2}
	- \Bigl(m+\tfrac{1}{2}\Bigr)\Bigl(\tfrac{1}{2}-\tfrac{1}{p}\Bigr)\ge0.
	\]
	Since $D_{i}\ge1$ for $m\le i\le n$, \textbf{Output 5} gives
	\[
	r_{m}^{\frac{\beta_{m+1}}{2}
		-\bigl(\tfrac{1}{2}+m\bigr)\bigl(\tfrac{1}{2}-\tfrac{1}{p}\bigr)}
	\,D_{m}^{\frac{\beta_{m+1}}{2}
		-(m+1)\bigl(\tfrac{1}{2}-\tfrac{1}{p}\bigr)}
	\;\le\;
	r_{m+1}^{\frac{\beta_{m+1}}{2}
		-\bigl(\tfrac{1}{2}+m\bigr)\bigl(\tfrac{1}{2}-\tfrac{1}{p}\bigr)}.
	\]
	Hence
	\begin{align*}
		\tilde C
		&\lesssim_{\epsilon} 
		r_{m+1}^{\frac{\beta_{m+2}}{2}
			-(m+1)\bigl(\tfrac{1}{2}-\tfrac{1}{p}\bigr)}
		\,D_{m+1}^{\frac{\beta_{m+2}}{2}
			-(m+2)\bigl(\tfrac{1}{2}-\tfrac{1}{p}\bigr)}
		\prod_{i=m+2}^{n-2}
		r_{i}^{\frac{\beta_{i+1}-\beta_{i}}{2}
			-\frac{1}{2}\bigl(\tfrac{1}{2}-\tfrac{1}{p}\bigr)}
		\,D_{i}^{\frac{\beta_{i+1}}{2}
			-(i+1)\bigl(\tfrac{1}{2}-\tfrac{1}{p}\bigr)}\\
		&\quad\times
		r_{n-1}^{\frac{1-\beta_{n-1}}{2}}
		\,D_{n-1}^{\frac{1}{2}
			-\bigl(\tfrac{1}{2}-\tfrac{1}{p}\bigr)}
		\,r_{n-1}^{-\Bigl[\frac{1-\gamma}{2(l-n+2)}+\tfrac{1}{2}\Bigr]
			\bigl(\tfrac{1}{2}-\tfrac{1}{p}\bigr)}
		\,D_{n-1}^{-\, (n-1)\,\gamma\bigl(\tfrac{1}{2}-\tfrac{1}{p}\bigr)}.
	\end{align*}
	Impose 
	\[
	\frac{\beta_{i}}{2}
	-\Bigl(\frac{i-m}{2}+m\Bigr)\Bigl(\tfrac{1}{2}-\tfrac{1}{p}\Bigr)\ge0
	\quad
	\text{for }m\le i\le n.
	\]
	Then, by the same argument,
	\begin{align*}
		\tilde C
		&\lesssim_{\epsilon}
		r_{n-1}^{\frac{1}{2}
			- \Bigl[\tfrac{1-\gamma}{2(l-n+2)}
			+\tfrac{1}{2}+m+\tfrac{1}{2}(n-1-m)\Bigr]
			\bigl(\tfrac{1}{2}-\tfrac{1}{p}\bigr)}
		\,D_{n-1}^{\frac{1}{2}
			-\bigl(1+(n-1)\gamma\bigr)\bigl(\tfrac{1}{2}-\tfrac{1}{p}\bigr)}\\
		&=
		r_{n-1}^{\frac{1}{2}
			- \Bigl[\frac{1-\gamma}{2(l-n+2)}
			+\frac{n+m}{2}\Bigr]
			\bigl(\tfrac{1}{2}-\tfrac{1}{p}\bigr)}
		\,D_{n-1}^{\frac{1}{2}
			-\bigl(1+(n-1)\gamma\bigr)\bigl(\tfrac{1}{2}-\tfrac{1}{p}\bigr)}.
	\end{align*}
	Setting the exponents of $r_{n-1}$ and $D_{n-1}$ to be zero yields the critical equations
	\[
	\frac{1}{2}
	-\Bigl(\frac{3n+1}{4}+\frac{1-\gamma}{2(l-n+2)}\Bigr)
	\Bigl(\frac{1}{2}-\frac{1}{p}\Bigr)=0,
	\quad
	\frac{1}{2}
	=\bigl(1+(n-1)\gamma\bigr)
	\Bigl(\frac{1}{2}-\frac{1}{p}\Bigr).
	\]
	Solving for $\gamma$ and $p$ gives
	\[
	\gamma
	=\frac{(3n-3)(l-n+2)+2}
	{4(l-n+2)(n-1)+2},
	\]
	\[
	p
	=2\,\frac{3n+1}{3n-3}
	-\frac{4}{(3n-3)^{2}l+(3n-3)^{2}(2-n)+2(3n-3)}.
	\]
	This completes the proof of Proposition \ref{prop Broad}.

	\section{Curved maximal Kakeya estimate}\label{KKYM}
	\subsection{Definitions and reductions}
	The lower bounds for the Hausdorff dimension of curved Kakeya sets derived from oscillatory integrals often incur large losses. Consequently, we apply the polynomial method directly to curved maximal Kakeya estimates to obtain sharper Hausdorff dimension lower bounds and prove Theorem \ref{thm Kakeya}. 
	
	\begin{definition}[Curved Kakeya maximal function {\cite{Bourgain91.2}}]
		Given a H{\"o}rmander phase $\phi$ and $\varepsilon_\phi\in(0,\varepsilon_0]$, define
		\[
		\mathcal{K}_{\delta}^{\phi}f(y)
		\coloneqq 
		\sup_{\omega\in{B}_{\varepsilon_{\phi}}^{n-1}}
		\frac{1}{\delta^{n-1}}
		\int_{T_{y}^{\delta,\phi}(\omega)}\lvert f(x)\rvert\,dx,
		\]
		for all $y\in{B}_{\varepsilon_{\phi}}^{n-1}$ and $0<\delta<\varepsilon_{\phi}$.
	\end{definition}
	
	We shall work with the dual $L^p$ formulation for families of curved tubes.
	
	\begin{definition}[Direction-separated family]\label{dsf}
		Let $\mathbb{T}$ be a collection of $\delta$-tubes $T_{y}^{\delta}(\mathbf{x})$ associated with $(\phi,\varepsilon_\phi)$ (see Definition \ref{curved}).  For each $T\in\mathbb{T}$ write $y(T)=y$ for its direction. The family $\mathbb{T}$ is \emph{direction-separated} if 
		\[
		\lvert y(T_{1})-y(T_{2})\rvert > \delta
		\quad\text{for all }T_{1}\neq T_{2}\text{ in }\mathbb{T}.
		\]
	\end{definition}
	
	\begin{definition}[Curved maximal Kakeya estimate]
		Let $0<\delta<\varepsilon_{\phi}$ and let $\mathbb{T}$ be a direction-separated family of $\delta$-tubes in $\mathbb{R}^n$. We say the \emph{curved maximal Kakeya estimate} holds at exponent $p$ if for every $\epsilon>0$ there is $C_\epsilon>0$ such that
		\[
		\Bigl\|\sum_{T\in\mathbb{T}}\chi_{T}\Bigr\|_{L^{p}(\mathbb{R}^{n})}
		\le C_{\epsilon}\,\delta^{-\bigl(n-1-\tfrac{n}{p}\bigr)-\epsilon}
		\Bigl(\sum_{T\in\mathbb{T}}\lvert T\rvert\Bigr)^{\!1/p}
		\]
		for all such $\mathbb{T}$.
	\end{definition}

	Standard arguments translate curved maximal Kakeya estimates into lower bounds on the Hausdorff dimension of the associated curved Kakeya sets. If the curved maximal Kakeya estimate holds for all $p\ge p_0$, then the corresponding set $K$ satisfies
	\[
	\dim_{\mathcal{H}}K \ge p_0'.
	\]
	See \cite[Corollary 22.8, Theorem 22.9]{Mattila} and \cite[Proposition 3]{Wisewell05}.

	Next, we prove that for odd $n\ge3$, curved Kakeya sets $K$ associated with H{\"o}rmander phases of contact order $l$ satisfy
	\[
	\dim_{\mathcal{H}}K \ge \frac{n+1}{2} + \frac{n-1}{2(l+2-n)(n-1)+2}.
	\]
	In particular, in $\mathbb{R}^3$ the minimal contact order $A(3)=4$ yields
	\[
	\dim_{\mathcal{H}}K \ge 2+\tfrac{1}{7}.
	\]

	\begin{remark}\label{dnl}
		Similar to \cite{DGGZ24}, for positive-definite phases Theorem \ref{thm improvement} yields the  lower bound
		\[
		\dim_{\mathcal{H}}K \ge \frac{n+1}{2} + \frac{n-1}{4(l+2-n)(n-1)+2},
		\]
		for all odd $n\ge3$ and any contact order $l$. This matches the bound in \cite{DGGZ24}. In contrast, Theorem \ref{thm Kakeya} gives a stronger improvement without assuming positive-definiteness.
	\end{remark}
	
	To conclude Theorem \ref{thm Kakeya}, it suffices to prove the curved maximal Kakeya estimates.
	
	\begin{theorem}
		Let $n\ge3$ be odd, and suppose $\phi$ satisfies $(H_1)$ and $(H_2)$ and has contact order at most $l$ at the origin in $\mathbb{R}^n$ for some $l\geq A(n)$. { Then there exists a constant $\varepsilon_{\phi}>0$ satisfying the following property:} for any $\epsilon>0$ there exists $C_{\epsilon}>0$ such that
		\[
		\bigl\lVert\sum_{T\in\mathbb{T}}\chi_{T}\bigr\rVert_{L^{p}(\mathbb{R}^{n})}
		\le C_{\epsilon}\,\delta^{-\bigl(n-1-\tfrac{n}{p}\bigr)-\epsilon}
		\Bigl(\sum_{T\in\mathbb{T}}\lvert T\rvert\Bigr)^{1/p}
		\]
		holds whenever
		\[
		p\ge \frac{(l+2-n)(n-1)(n+1) + 2n}{(l+2-n)(n-1)^2 + 2(n-1)}.
		\]
		Here $\mathbb{T}$ is a direction-separated family of $\delta$-tubes associated with $(\phi,\varepsilon_\phi)$.
	\end{theorem}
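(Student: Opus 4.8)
By duality, the claimed estimate is equivalent to the bound on $\|\sum_{T\in\mathbb{T}}\chi_T\|_{L^p}$ for an arbitrary direction-separated family $\mathbb{T}$ of curved $\delta$-tubes associated with $\phi$. First I would carry out the routine normalizations: rescale so that $\delta^{-1}=R=\lambda$ and work on $\mathbb{B}_R^n$, dyadically pigeonhole so that all tubes contribute at one dyadic level and $F:=\sum_{T\in\mathbb{T}}\chi_T$ is essentially constant on the corresponding superlevel set, and pass to a single-scale statement at the critical exponent together with the standard induction on $\delta$. Throughout, only $(H_1)$ and $(H_2)$ enter; unlike in Theorem \ref{thm improvement}, positive-definiteness plays no role here, because the geometry of curved tubes and their incidences with algebraic varieties are insensitive to the sign of $\nabla_y^2\phi$. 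This is one reason the Kakeya bound is stronger than its oscillatory counterpart, as recorded in Remark \ref{dnl}.

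\textbf{Broad--narrow and polynomial partitioning.} Following the Bourgain--Guth scheme, I would split the contribution of $F$ at each point into a \emph{broad} part --- the point lies in tubes whose normalized Gauss directions $G^{\lambda}(\mathbf{x};\tau)$ are jointly $K^{-1}$-transverse to every $A$-tuple of $(k-1)$-planes, with $k=\lceil\tfrac{n+1}{2}\rceil=\tfrac{n+1}{2}$, the critical compression dimension --- and a \emph{narrow} part concentrated near a single $(k-1)$-plane of directions; the narrow part is absorbed by rescaling a narrow cap back to the unit ball (the rescaled phase again satisfies $(H_1),(H_2)$ and has contact order at most $l$, by \eqref{eq scalingcurve}--\eqref{eq scalingphi}) and invoking the inductive hypothesis. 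For the broad part I would run the polynomial partitioning algorithm of Section \ref{partitioning} applied to $F$: at each stage either we are in the cellular case, with the tubes split among $\sim d^n$ cells of diameter $R/d$ --- each curved $\delta$-tube meeting only $O(d)$ cells by a B\'ezout-type bound for the intersection of a curve with a variety --- and we recurse into the cells; or we are in the algebraic case, where most tubes lie in $\mathcal{N}_{R^{1/2+\sigma_{\circ}}}(Z)$ for an algebraic variety $Z$ of dimension $\le n-1$ and degree $O(d)$, and we iterate the partitioning inside $Z$. This yields the dimension-descending chain $\mathcal{G}_n,\mathcal{G}_{n-1},\dots,\mathcal{G}_m$ with $m\ge k$ and the parameters $\{r_i\},\{D_i\}$ as in Outputs 1--5, and hence the Kakeya analogues of Propositions \ref{prop Property 1} and \ref{prop Property 3}, with the $\|f\|_2^2$-type quantities replaced by the counts $\#\mathbb{T}[B_{r_{n'}}]$ of curved tubes of length $r_{n'}$ contained in $\mathcal{S}_{n'}$.

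\textbf{Feeding in the contact order.} The crux is the terminal $n'$-dimensional estimate --- the Kakeya analogue of Lemma \ref{lem improvement} --- bounding the number of direction-separated curved $\delta$-tubes that fit inside $\mathcal{N}_{\delta^{1/2}}(S_{n'})$ for an $n'$-dimensional algebraic variety $S_{n'}$. Rescaling $r_{n'}$ to unit length turns this into a count $\#\{T\in\mathbb{T}:T\subset\tilde{\mathcal{S}}_{n'}\}$ of unit-length tubes for the rescaled phase $\phi_{\lambda_{n'}}$, which Proposition \ref{prop scalingPWA} controls by
\[
\#\{T\in\mathbb{T}:T\subset\tilde{\mathcal{S}}_{n'}\}\lesssim_{\epsilon}\min\bigl\{\lambda_{n'}^{\,l-(n-1)}\,\mathcal{L}^n(\tilde{\mathcal{S}}_{n'}),\ \mathcal{L}^n\bigl(\mathcal{N}_{1/\lambda_{n'}}(\tilde{\mathcal{S}}_{n'})\bigr)\bigr\}\,\delta^{-(n-1)-\epsilon},
\]
the factor $\lambda_{n'}^{\,l-(n-1)}$ being the quantitative footprint of the contact order, via Theorem \ref{thm PWA}. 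Taking the geometric mean of this packing bound against the general compression bound at dimension $m=k$, with a parameter $\gamma\in[0,1]$ to be optimized --- the Kakeya counterpart of the interpolation between \eqref{eq improvement} and \eqref{eq general} carried out in \eqref{eq average}, but now inserted directly into an $L^p$ tube count rather than through an $L^2$-based wave-packet estimate, hence without the halving loss present there --- gives a workable bound for $\#\mathbb{T}[B_{r_{n'}}]$.

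\textbf{Optimization and the main obstacle.} Substituting this into the Kakeya analogue of \eqref{eq 1and2} and telescoping the $r_i$- and $D_i$-exponents exactly as in Section \ref{proof} (choosing the auxiliary exponents $\beta_i$ so that all intermediate $D_i$-powers are nonnegative, using $D_i\ge1$ for $m\le i\le n$ and $D_{m-1}\le r_m$), the estimate collapses to the single scale $r_{n-1}$ and the single parameter $D_{n-1}$; setting both of their exponents to zero gives two critical equations in $(\gamma,p)$ whose solution is the stated
\[
p=\frac{(l+2-n)(n-1)(n+1)+2n}{(l+2-n)(n-1)^2+2(n-1)},
\]
and the dual exponent $p'=\tfrac{n+1}{2}+\tfrac{n-1}{2(l+2-n)(n-1)+2}$ then yields Theorem \ref{thm Kakeya}. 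I expect the main difficulty to lie in (i) proving the Kakeya versions of Propositions \ref{prop Property 1} and \ref{prop Property 3} for the counting function --- tracking how curved tubes distribute over the nested varieties $\mathcal{S}_{n'}$, controlling the overlap of the rescaled tubes $T_{\theta,v}'$, and absorbing the $R^{O(\sigma)}$ losses into $R^{O(\epsilon)}$; and (ii) applying Proposition \ref{prop scalingPWA} uniformly at every dimension level $m\le n'\le n-1$, which rests on the stability of the finite contact order condition under the small perturbations $(0,0;0)\mapsto(0,0;y)$ used in its proof. Once these are in place, the optimization is a routine computation.
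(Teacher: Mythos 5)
Your proposal is correct and follows essentially the same route as the paper: the $k$-broad reduction with $k=\tfrac{n+1}{2}$, the HRZ-style polynomial partitioning run directly on tube counts (Propositions \ref{prop KKYproperty1}--\ref{prop KKYproperty3}), the rescaled polynomial Wolff axiom of Proposition \ref{prop scalingPWA} as the quantitative contact-order input, and the same $\gamma$-interpolation and telescoping optimization yielding exactly the stated exponent and its dual. The only deviation is minor: the paper handles the narrow part by invoking Proposition \ref{prop KKYImply} (which needs only nondegeneracy and the non-binding constraint $p\ge\tfrac{n-k+2}{n-k+1}$), rather than the cap-rescaling induction you sketch.
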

	
	In the curved Kakeya problem, we use the broad-narrow method introduced by Bourgain and Guth \cite{BG11}. As in Section \ref{reduction}, we recall the broad norm.
	
	We decompose the direction set ${B}_{\varepsilon_{\phi}}^{n-1}$ into balls $\tau$ of radius $\rho$, where $\delta\ll\rho\ll1$. Let $\mathbb{T}_\tau$ be the collection of $\delta$-tubes with directions in $\tau$, and set $\mathbb{T}=\bigcup_\tau\mathbb{T}_\tau$. We also cover $\mathbb{R}^n$ by finitely overlapping balls $B_\delta$ of radius $\delta$, denoted by $\mathcal{B}_\delta$.
	
	Fix an integer $A$ to be chosen later. For each $B_\delta$ centered at $\mathbf{x}$, define
	\[
	\mu_{\mathbb{T}}(B_\delta)
	\coloneqq
	\min_{V_1,\dots,V_A\in Gr(k-1,n)}
	\max_{\substack{\tau\in\mathcal{U}(\mathbf{x},\rho^{-1},V_a)\\1\le a\le A}}
	\|\sum_{T\in\mathbb{T}_\tau}\chi_T\|_{L^p(B_\delta)}^p.
	\]
	Here $Gr(k-1,n)$ is the Grassmannian of $(k-1)$-planes in $\mathbb{R}^n$, and $k$ will be specified later. We define the $k$-broad norm by
	\begin{equation}\label{eq KKYnorm}
		\bigl\|\sum_{T\in\mathbb{T}}\chi_T\bigr\|_{BL^p_{k,A}(U)}
		\coloneqq
		\Bigl(\sum_{B_\delta\in\mathcal{B}_\delta}\frac{|B_\delta\cap U|}{|B_\delta|}
		\,\mu_{\mathbb{T}}(B_\delta)\Bigr)^{1/p},
	\end{equation}
	where $\mathcal{B}_\delta$ is any finitely overlapping cover of $\mathbb{R}^n$ by $\delta$-balls. For $p$ in the appropriate range, a $k$-broad estimate of this form implies the curved maximal Kakeya estimate.
	
	\begin{prop}[\cite{GHI,HRZ}]\label{prop KKYImply}
		Let $p\ge \tfrac{n-k+2}{n-k+1}$ and $A\approx1$. Let $\mathbb{T}$ be a direction-separated family of $\delta$-tubes. Suppose that for every $\epsilon>0$ there is $C_{\epsilon}>0$ such that
		\[
		\bigl\|\sum_{T\in\mathbb{T}}\chi_T\bigr\|_{BL^p_{k,A}(\mathbb{R}^n)}
		\le C_{\epsilon}\,\delta^{-(n-1-\tfrac{n}{p})-\epsilon}
		\Bigl(\sum_{T\in\mathbb{T}}|T|\Bigr)^{1/p}
		\quad\text{for all }0<\delta<1.
		\]
		Then for the same $p$ and every $0<\delta<1$,
		\[
		\bigl\|\sum_{T\in\mathbb{T}}\chi_T\bigr\|_{L^p(\mathbb{R}^n)}
		\le C_{\epsilon}\,\delta^{-(n-1-\tfrac{n}{p})-\epsilon}
		\Bigl(\sum_{T\in\mathbb{T}}|T|\Bigr)^{1/p}.
		\]
		
	\end{prop}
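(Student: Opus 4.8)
My plan is to prove this exactly as the curved maximal Kakeya analogue of the reduction for H\"ormander oscillatory integrals (\cite[Prop.~11.1]{GHI}) quoted in Section~\ref{reduction}: a Bourgain--Guth broad--narrow argument \cite{BG11,GHI,HRZ}, run as an induction on the number of dyadic scales between $\delta$ and $1$. I fix a large constant $K$ (to be chosen in terms of $\epsilon$ only at the very end) and set $\rho=K^{-1}$; it suffices to treat $\delta=K^{-m}$ and induct on $m$, the base case $m\lesssim 1$ being trivial since then $\#\mathbb{T}\lesssim 1$ and $p(n-1)\ge n$. So I assume the asserted inequality at all scales $K^{-m'}$, $m'<m$, with a constant $C_\epsilon$ that will be pinned down at the end, and prove it at scale $\delta=K^{-m}$.

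\emph{Step 1 (Bourgain--Guth inequality).} I partition $\mathbb{B}_{\varepsilon_0}^{n-1}$ into $\rho$-caps $\tau$, write $g_\tau:=\sum_{T\in\mathbb{T}_\tau}\chi_T$, and cover $\mathbb{R}^n$ by a bounded-overlap family of $\delta$-balls $B_\delta$. On each $B_\delta$, with centre $\mathbf{x}$ and with $V_1,\dots,V_A\in Gr(k-1,n)$ realising the minimum in $\mu_{\mathbb{T}}(B_\delta)$, I split $\sum_{T}\chi_T=\sum_{\tau\in\bigcap_a\mathcal{U}(\mathbf{x},\rho^{-1},V_a)}g_\tau+\sum_{\tau\notin\bigcap_a\mathcal{U}(\mathbf{x},\rho^{-1},V_a)}g_\tau$; the first sum is bounded via the triangle inequality by $\#\{\tau\}\lesssim\rho^{-(n-1)}$ times $\max_{\tau\in\bigcap_a\mathcal{U}}\|g_\tau\|_{L^p(B_\delta)}=\mu_{\mathbb{T}}(B_\delta)^{1/p}$, and the second by $\sum_{a=1}^A\sum_{\tau\in\mathcal{E}(\mathbf{x},\rho^{-1},V_a)}g_\tau$. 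Raising to the $p$-th power, summing over $B_\delta$, invoking \eqref{eq KKYnorm} for the broad part, pigeonholing the $\delta$-balls into the $\rho^{-O(1)}$ possible configurations of planes, and taking $p$-th roots then yields
\[
\Bigl\|\sum_{T\in\mathbb{T}}\chi_T\Bigr\|_{L^p(\mathbb{R}^n)}
\lesssim_K
\Bigl\|\sum_{T\in\mathbb{T}}\chi_T\Bigr\|_{BL^p_{k,A}(\mathbb{R}^n)}
+\sum_{V}\Bigl\|\sum_{T\in\mathbb{T}^{V}}\chi_T\Bigr\|_{L^p(\mathbb{R}^n)},
\]
where the last sum runs over $\rho^{-O(1)}$ fixed $(k-1)$-planes $V$ and $\mathbb{T}^{V}$ is the sub-family of tubes of $\mathbb{T}$ whose direction makes angle $\le\rho$ with $V$.

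\emph{Step 2 (broad and narrow terms).} The broad term is $\le C_\epsilon\,\delta^{-(n-1-n/p)-\epsilon}(\sum_T|T|)^{1/p}$ straight from the hypothesis of the proposition, with $C_\epsilon$ independent of $\delta$ and $\mathbb{T}$. For a narrow term I cover $\mathbb{R}^n$ by $\rho$-balls $B$; on each $B$ the tubes of $\mathbb{T}^{V}$ meeting $B$ have directions in a $\rho$-neighbourhood of the fixed $(k-1)$-plane $V$, so in physical space they lie in a $\rho$-neighbourhood of a $k$-dimensional surface. I apply the affine map sending $B$ to the unit ball together with the induced rescaling of $\phi$---for a H\"ormander phase this is the parabolic (``adapted'') rescaling of \cite{GHI}---which takes $\phi$ to a new H\"ormander phase on $\mathbb{B}_{\varepsilon_0}^{n-1}$ still obeying $(H_1)$ and $(H_2)$, and takes $\mathbb{T}^{V}\cap B$ to a direction-separated family of $(\delta/\rho)$-tubes whose directions fill an $O(1)$-neighbourhood of a $(k-1)$-plane. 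Applying the inductive hypothesis at scale $\delta/\rho=K^{-(m-1)}$ on each $B$, summing over the $\rho$-balls, and accounting for the number of relevant caps ($\sim\rho^{1-k}$) and tubes gives
\[
\Bigl\|\sum_{T\in\mathbb{T}^{V}}\chi_T\Bigr\|_{L^p(\mathbb{R}^n)}
\le C_\epsilon\,C_0(K)\,\rho^{\,\kappa(p)}\,\delta^{-(n-1-n/p)-\epsilon}\Bigl(\sum_{T}|T|\Bigr)^{1/p},
\]
where $C_0(K)$ depends only on $K,n,p$ and the exponent $\kappa(p)$ is $\ge 0$ precisely when $p\ge\frac{n-k+2}{n-k+1}$. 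This is exactly where the hypothesis on $p$ is used: $\frac{n-k+2}{n-k+1}$ is the break-even exponent at which the loss from passing to the $\rho^{-O(1)}$ narrow sub-families is compensated by the lower effective dimensionality of a $\rho$-neighbourhood of a $(k-1)$-plane, just as in \cite{BG11,GHI,HRZ}.

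\emph{Step 3 and main obstacle.} Combining Steps~1 and~2 gives $\|\sum_T\chi_T\|_{L^p(\mathbb{R}^n)}\le(1+C_1(K)\rho^{\kappa(p)})\,C_\epsilon\,\delta^{-(n-1-n/p)-\epsilon}(\sum_T|T|)^{1/p}$; choosing $K$ large in terms of $\epsilon$ so that the geometric accumulation of these factors over the $\lesssim\log_K(1/\delta)$ scales is swallowed by $\delta^{-\epsilon}$ (immediate when $\kappa(p)>0$; at the endpoint one instead extracts a genuine $\delta^{-\epsilon}$-saving from the $\epsilon$ already present in the inductive bound, as in \cite{GHI}) closes the induction and proves the estimate for every $p\ge\frac{n-k+2}{n-k+1}$. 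I expect the main obstacle to be the rescaling in Step~2, which is the one genuinely curved step: one must verify that the rescaling adapted to $\phi$ sends the narrow configuration on a $\rho$-ball to a legitimate rescaled instance of the problem---preserving $(H_1)$ and $(H_2)$, keeping the new phase defined on a ball of admissible radius $\varepsilon_0$, and preserving direction-separation---and that the bookkeeping of caps, tubes and $\rho$-balls produces exactly the exponent $\kappa(p)$ with the stated sign. For straight tubes this is \cite{HRZ}; in the H\"ormander setting it is the Kakeya counterpart of the rescaling analysis of \cite{GHI}, and the remaining manipulations are routine.
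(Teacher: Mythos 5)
The paper does not actually prove Proposition \ref{prop KKYImply}: the remark that follows it simply cites \cite{HRZ} (straight tubes) and \cite{GHI} (the oscillatory analogue) and observes that only the nondegeneracy of the phase is needed. Your overall plan --- Bourgain--Guth broad--narrow decomposition plus induction on scales --- is therefore the intended route, and your Step~1 is the standard reduction up to harmless $K^{O(1)}$ losses.

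The genuine gap is in Step~2, which is precisely where the hypothesis $p\ge\tfrac{n-k+2}{n-k+1}$ must enter, and the mechanism you describe would not produce the claimed factor $\rho^{\kappa(p)}$. Two concrete problems: (i) an isotropic affine map sending a physical $\rho$-ball to the unit ball leaves directions unchanged, so the rescaled tubes have width $\delta/\rho$ but directions that are still only $\delta$-separated; they are \emph{not} ``a direction-separated family of $(\delta/\rho)$-tubes'', and invoking the inductive hypothesis then forces a split into many parallel subfamilies whose multiplicity wipes out the gain; (ii) the statement that inside a $\rho$-ball the narrow tubes lie in a $\rho$-neighbourhood of a $k$-dimensional surface is vacuous (any subset of a $\rho$-ball does) and carries no information. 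The narrow part should instead be handled cap by cap in \emph{direction} space, as in \cite{HRZ} and the Kakeya reading of \cite{GHI}: the $\rho$-caps within angle $\rho$ of the fixed $(k-1)$-plane $V$ number $\sim\rho^{-(k-2)}$ (not $\rho^{1-k}$ as you write); apply H\"older over these caps, costing $\rho^{-(k-2)(p-1)}$ in the $p$-th power, and for each single cap perform the parabolic rescaling adapted to that cap (in the curved case its variable-coefficient version, which uses only $(H_1)$--$(H_2)$, exactly as the paper's remark asserts), under which the $\delta$-tubes with directions in the cap genuinely become a $(\delta/\rho)$-direction-separated family of $(\delta/\rho)$-tubes with spatial Jacobian $\rho^{-(n-1)}$; the inductive hypothesis at scale $\delta/\rho$ then gives a gain $\rho^{(n-1)p-n}$, and the balance $(n-k+1)(p-1)-1\ge0$ is exactly $p\ge\tfrac{n-k+2}{n-k+1}$. (With your cap count $\rho^{1-k}$ the same balance would land at $\tfrac{n-k+1}{n-k}$, which already shows the bookkeeping you defer as ``routine'' cannot be, as written.) So the displayed inequality at the end of your Step~2 is asserted rather than derived; replacing the per-ball isotropic rescaling by the per-cap parabolic rescaling together with the H\"older step over the $\sim\rho^{-(k-2)}$ narrow caps is what closes the argument, and verifying that this rescaling stays inside the H\"ormander class with uniform constants is the only genuinely variable-coefficient input.
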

	
	\begin{remark}
		In \cite{HRZ}, Proposition \ref{prop KKYImply} is proved for the standard Kakeya estimate. The proof of Proposition \ref{prop KKYImply} requires only the non-degeneracy condition of H\"ormander phases, so we omit the proof here. Similar results are proved for the associated oscillatory integral operator in the variable-coefficient setting in \cite{GHI}.
	\end{remark}
	
	It then suffices to prove the following $k$-broad estimate.
	\begin{prop}\label{prop KKYBroad}
		Let $n\ge 3$ be odd, and let $k=\lceil (n+1)/2\rceil$. Suppose $\phi$ satisfies $(H_1)$ and $(H_2)$ and has contact order at most $l\in\mathbb N$ at the origin in $\mathbb R^n$. There exists a constant $\varepsilon_{\phi}>0$ with the following property: let $\mathbb T$ be a direction-separated family of $\delta$-tubes associated with $(\phi,\varepsilon_\phi)$. For any $\epsilon>0$ there is a constant $C_\epsilon>0$ such that
		\begin{equation}\label{eq KBroad}
			\bigl\|\sum_{T\in\mathbb T}\chi_T\bigr\|_{BL^p_{k,A}(\mathbb R^n)}
			\le C_\epsilon\,\delta^{-(n-1-\tfrac{n}{p})-\epsilon}
			\bigl(\sum_{T\in\mathbb T}|T|\bigr)^{1/p}
		\end{equation}
		whenever $p\ge \frac{(l+2-n)(n-1)k + n}{(l+2-n)(n-1)(k-1) + n-1}$.
	\end{prop}

	\begin{remark}    In fact, for any spatial dimension $n$ and contact order $l$, the $k$-broad estimate can be improved to 
		\[ p \geq \frac{k}{k-1} - \Theta(n,k,l) \quad \text{for any integer } 2\leq k \leq n, \]
		where $\Theta(n,k,l)$ is a small number. For even $n$, one shows
		\[
		p \ge \min_{2\le k\le n}\max\Bigl\{\frac{k}{k-1}-\Theta(n,k,l),\,\frac{n-k+2}{n-k+1}\Bigr\}
		= \frac{n+2}{n}.
		\]
		Indeed:
		\begin{itemize}
			\item $\frac{k}{k-1} = \frac{n-k+2}{n-k+1}$ exactly when $k = (n+2)/2$;
			\item for even $n$, $(n+2)/2$ is an integer;
			\item $\Theta(n,k,l)$ is very small.
		\end{itemize}
		Hence in even dimensions the curved maximal Kakeya estimates cannot be improved through this method. Indeed, for even $n$, any improvement of the Kakeya maximal estimate requires $l\approx n$, in line with the Bourgain's condition. However, since $A(n)\gg n$, our method cannot achieve this.
	\end{remark}
	
	\subsection{Proof of Proposition \ref{prop KKYBroad}}
	We continue to apply polynomial partitioning in the Kakeya setting, following a modification of the algorithm in \cite{HRZ}.
	
	Fix a sequence of exponents $p_{n'}$ satisfying
	\[
	p_k \ge p_{k+1} \ge \cdots \ge p_n =: p \ge 1,
	\quad k \le n'\le n,
	\]
	and define
	\[
	\iota_{n'} = \Bigl(1 - \tfrac1{p_{n'}}\Bigr)^{-1}\Bigl(1 - \tfrac1p\Bigr),
	\quad k \le n'\le n,
	\]
	so that $\iota_n=1$.  Fix $0<\epsilon'\ll\epsilon\ll1$.
	
	We now analyze the sum $\sum_{T\in\mathbb{T}}\chi_T$ at finer scales.
	
	\noindent{\bf Input.}
	\begin{itemize}
		\item A small scale $0<\delta\ll1$.
		\item A large integer $A\gg1$.
		\item A family $\mathbb{T}$ of $\delta$-tubes such that 
		\[
		\bigl\|\sum_{T\in\mathbb{T}}\chi_T\bigr\|_{BL^p_{k,A}}\neq 0.
		\]
	\end{itemize}
	
	\noindent{\bf Output.} At each $n'$‑dimensional step we produce:
	\begin{enumerate}[label=(\roman*)]
		\item A tuple of scales 
		\[
		\vec\delta_{n'}=(\delta_n,\delta_{n-1},\dots,\delta_{n'}),
		\quad
		\delta^{\epsilon'}=\delta_n>\cdots>\delta_{n'}\ge\delta^{1-\epsilon'}.
		\]
		\item A tuple of parameters 
		\[
		\vec D_{n'}=(D_n,D_{n-1},\dots,D_{n'}),
		\quad
		D_j\ge1\,(n'\le j\le n),\;D_n=1.
		\]
		\item Integers 
		\[
		A = A_n>A_{n-1}>\cdots>A_{n'}.
		\]
		\item A sequence of transverse complete intersections $\vec S_{n'}=(S_n,S_{n-1},\dots,S_{n'})$, which satisfies
		$$\dim S_i=i,\;\deg S_i=O(1).$$
		We denote the collection of $\vec S_{n'}$ by $\vec{\mathbb{S}}_{n'}$.
		\item  For each $\vec{S}_{n'}$, there is a ball $B[\vec{S}_{n'}]$ of radius $\delta_{n'}$ and a subfamily 
		$\mathbb{T}[\vec{S}_{n'}]\subset\mathbb{T}$ of tubes tangent to $S_{n'}$ in $B[\vec{S}_{n'}]$, given by
		\[
		\mathbb{T}[\vec{S}_{n'}] = \left\{ T \in \mathbb{T} : \left| T \cap \mathcal{N}_{\delta}(S_{n'}) \cap B[\vec{S}_{n'}] \right| \geq \delta_{n'}|T| \right\}.
		\]
	\end{enumerate}
	\begin{prop}\label{prop KKYproperty1}
		For each $n'\le n$, one has
		\[
		\begin{split}
			\Bigl\|\sum_{T\in\mathbb{T}}\chi_T\Bigr\|_{BL^p_{k,A}(\mathbb{R}^n)}
			&\;\lesssim\;
			\delta^{-\epsilon'}\,
			C(\vec D_{n'};\vec\delta_{n'})\,
			[\delta^n\#\mathbb{T}]^{1-\iota_{n'}}\\
			&\quad\times
			\Biggl(
			\sum_{\vec S_{n'}\in\vec{\mathbb{S}}_{n'}}
			\Bigl\|\sum_{T\in\mathbb{T}[\vec S_{n'}]}\chi_T\Bigr\|_{BL^{p_{n'}}_{k,A_{n'}}(B[\vec S_{n'}])}^{p_{n'}}
			\Biggr)^{\frac{\iota_{n'}}{p_{n'}}}\,.
		\end{split}
		\]
		Here
		\[
		C(\vec D_{n'};\vec\delta_{n'})
		=
		\prod_{i=n'}^{n-1}
		\Bigl(\frac{\delta_i}{\delta}\Bigr)^{\iota_{i+1}-\iota_i}
		D_i^{(1+\epsilon')(\iota_{i+1}-\iota_{n'})+\epsilon'}.
		\]
	\end{prop}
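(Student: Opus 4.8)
The plan is to prove Proposition \ref{prop KKYproperty1} by downward induction on $n'$, running the modified polynomial partitioning algorithm of \cite{HRZ} one dimension at a time; the asserted inequality is exactly the accumulated form of the single-dimension estimates. The broad structure is preserved by letting the multiplicity parameter decrease strictly, $A = A_n > A_{n-1} > \dots > A_{n'}$, one test subspace being discarded at each dimension drop. The base case $n'=n$ is immediate: there $\iota_n = 1$, so $[\delta^n\#\mathbb{T}]^{1-\iota_n} = 1$ and the product defining $C(\vec D_n;\vec\delta_n)$ is empty, while $\vec{\mathbb{S}}_n$ is the single chain with $S_n = \mathbb{R}^n$, $B[\vec S_n]$ a $\delta^{\epsilon'}$-ball, $A_n = A$ and $\mathbb{T}[\vec S_n] = \mathbb{T}$; covering $\mathbb{R}^n$ by a finitely overlapping family of $\delta^{\epsilon'}$-balls and using that $\|\cdot\|_{BL^p_{k,A}}$ is an $\ell^p$-sum over $\delta$-balls gives the claim with implicit constant $O(1)$.

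For the inductive step, assume the estimate at level $n'+1$. Fix a chain $\vec S_{n'+1}\in\vec{\mathbb{S}}_{n'+1}$ and apply the polynomial partitioning lemma of Section \ref{partitioning} to $F = \sum_{T\in\mathbb{T}[\vec S_{n'+1}]}\chi_T$ on $B[\vec S_{n'+1}]$, with $Z = S_{n'+1}$ and partitioning degree $D_{n'}$. In the \emph{cellular case} one gets $\sim D_{n'}^{\,n'+1}$ cells of diameter $\lesssim\delta_{n'+1}/D_{n'}$ on which $F$ has comparable mass; since each $\delta$-tube meets $O(D_{n'})$ cells, a typical cell is crossed by $\lesssim D_{n'}^{\,-n'}\#\mathbb{T}[\vec S_{n'+1}]$ tubes, so the $\ell^p$-subadditivity of the broad norm over cells together with dyadic pigeonholing (costing a $\delta^{\epsilon'}$-factor) reduces $B[\vec S_{n'+1}]$ to a smaller ball with a thinned tube family; the cell-counting here is what eventually produces the degree power $D_{n'}^{(1+\epsilon')(\cdots)+\epsilon'}$. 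In the \emph{algebraic case} the lemma yields an $n'$-dimensional transverse complete intersection $Y$ of bounded degree with $F$ essentially supported on $\mathcal{N}_{\delta_{n'}^{1/2+\sigma}}(Y)$; one sets $S_{n'} := Y$, chooses the $A_{n'+1}$-th test subspace along the tangent plane of $S_{n'}$ so that the remaining $A_{n'+1}-1 =: A_{n'}$ subspaces still witness broadness, and takes $\mathbb{T}[\vec S_{n'}]$ to be the subfamily tangent to $S_{n'}$ in the ball $B[\vec S_{n'}]$, thereby producing the refined chains $\vec S_{n'} = (\vec S_{n'+1},S_{n'})\in\vec{\mathbb{S}}_{n'}$.

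To convert the level-$(n'+1)$ broad norm on $B[\vec S_{n'+1}]$ into level-$n'$ data, interpolate the tangential $L^{p_{n'}}$ estimate just obtained against the trivial $L^1$-type bound $\|F\|_1\sim\delta^n\#\mathbb{T}[\vec S_{n'+1}]$ in the chosen normalization, the interpolation parameter being $\iota_{n'}/\iota_{n'+1}$ by the defining relation for $\iota$; this step, together with the cell-counting above, also contributes the geometric factor $(\delta_{n'}/\delta)^{\iota_{n'+1}-\iota_{n'}}$. Summing over $\vec S_{n'+1}$, using the bounded-overlap bound $\sum_{\vec S_{n'+1}}\#\mathbb{T}[\vec S_{n'+1}]\lesssim\delta^{-\epsilon'}\#\mathbb{T}$, merging $\sum_{\vec S_{n'+1}}\sum_{\vec S_{n'}\supset\vec S_{n'+1}}$ into $\sum_{\vec S_{n'}}$, and inserting into the inductive hypothesis, the exponent on the inner sum becomes $\iota_{n'}/p_{n'}$ and the normalization power becomes $[\delta^n\#\mathbb{T}]^{1-\iota_{n'}}$; moreover the $D_i$-exponents for $i>n'$ shift by $(1+\epsilon')(\iota_{n'+1}-\iota_{n'})$ when the level-$(n'+1)$ estimate is raised to the relevant power, which is exactly the discrepancy between $C(\vec D_{n'+1};\vec\delta_{n'+1})$ and $C(\vec D_{n'};\vec\delta_{n'})$. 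This yields the claimed inequality at level $n'$ and closes the induction.

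The main obstacle is the constant bookkeeping: one must check that the cell-counting factors, the interpolation weights, and the scale factors assemble into precisely $C(\vec D_{n'};\vec\delta_{n'})$, with all $\epsilon'$-losses absorbable into the prefactor $\delta^{-\epsilon'}$; and on the geometric side, that the tangential tube families behave correctly under the cellular rescalings — a $\delta$-tube tangent to $S_{n'+1}$ inside a cell of radius $\delta_{n'}$ becomes, after rescaling to unit scale, a $(\delta/\delta_{n'})$-tube tangent to the rescaled variety — and that the transverse complete intersections produced along the way stay of degree $O(1)$. These are precisely the points where the argument must follow the modified partitioning algorithm of \cite{HRZ, Guth18, HR} faithfully, and we invoke its recorded outputs directly.
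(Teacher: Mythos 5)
Your proposal takes essentially the same route as the paper: the paper itself does not prove Proposition \ref{prop KKYproperty1} but records it as a property of the modified first partitioning algorithm of \cite{HRZ} (the curved analogue of Properties I--III there), and your dimension-by-dimension induction sketch, ending with a deferral to that algorithm's recorded outputs, is exactly this. One caveat worth noting: your sketch treats only the tangential branch of the algebraic case, whereas in \cite{HRZ} it is the transverse sub-case --- recursing at a smaller scale without dropping dimension --- that actually produces the scales $\delta_{n'}$, the balls $B[\vec S_{n'}]$ and the factors $(\delta_i/\delta)^{\iota_{i+1}-\iota_i}$, so the bookkeeping you attribute to interpolation and cell-counting really originates there; since both you and the paper ultimately invoke the algorithm of \cite{HRZ}, this does not affect the conclusion.
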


	\begin{prop}\label{prop KKYproperty2}
		For each $n'\le n-1$,
		\[
		\sum_{\vec S_{n'}\in\vec{\mathbb{S}}_{n'}}
		\#\mathbb{T}[\vec S_{n'}]
		\;\lesssim\;
		\delta^{-\epsilon'}\,
		D_{n'}^{1+\epsilon'}
		\sum_{\vec S_{n'+1}\in\vec{\mathbb{S}}_{n'+1}}
		\#\mathbb{T}[\vec S_{n'+1}].
		\]
	\end{prop}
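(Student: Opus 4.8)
The plan is to recognize Proposition~\ref{prop KKYproperty2} as the \emph{tube-multiplicity} output of the polynomial partitioning algorithm at the $n'$-th step, and to prove it by tracking how a single parent family $\mathbb{T}[\vec S_{n'+1}]$ is redistributed among its descendants. Fix $\vec S_{n'+1}\in\vec{\mathbb{S}}_{n'+1}$ and recall that, inside the ball $B[\vec S_{n'+1}]$, the algorithm runs the polynomial partitioning lemma on $F:=\sum_{T\in\mathbb{T}[\vec S_{n'+1}]}\chi_T$, which is supported on a $\delta$-neighbourhood of the $(n'+1)$-dimensional variety $S_{n'+1}$; it iterates the cellular alternative (keeping the dimension at $n'+1$ and shrinking the ball) until a single algebraic step drops the dimension to $n'$, producing $S_{n'}$ together with the children $\{B[\vec S_{n'}]\}$ and their tangent families $\{\mathbb{T}[\vec S_{n'}]\}$. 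Since each $\vec S_{n'}\in\vec{\mathbb{S}}_{n'}$ has a unique ancestor in $\vec{\mathbb{S}}_{n'+1}$, it suffices to prove, for every fixed $\vec S_{n'+1}$,
\[
\sum_{\vec S_{n'}\prec\vec S_{n'+1}}\#\mathbb{T}[\vec S_{n'}]\;\lesssim\;\delta^{-\epsilon'}\,D_{n'}^{1+\epsilon'}\,\#\mathbb{T}[\vec S_{n'+1}],
\]
where $\prec$ denotes ``is a descendant of'', and then to sum over $\vec S_{n'+1}\in\vec{\mathbb{S}}_{n'+1}$.

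For the cellular contribution, each invocation of the cellular alternative at degree $d$ subdivides the current ball into $\sim d^{n'+1}$ pairwise-disjoint cells of $d^{-1}$ times the radius, and a tube is kept in a given child cell only when it spends a fixed proportion of its length there. The heart of the estimate is the bound that a single curved $\delta$-tube of $\phi$ meets at most $O_{\phi}(d)$ of the cells of the zero set of a degree-$d$ polynomial. For straight tubes this is B\'ezout; for the Kakeya curves of $\phi$ --- which by $(H_1)$ and the implicit function theorem form a bounded family of smooth curves --- one invokes the structural information about this family already used in \cite{GWZ22}, namely that it is algebraically well-approximated at every relevant scale, to conclude that the partitioning polynomial restricted to a core curve vanishes at $O_{\phi}(d)$ points, the $\delta$-fattening changing the count only by $O(1)$ since $\delta$ lies far below the cell scale. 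Hence each cellular step multiplies the running count of (child, tube) incidences by at most $O_{\phi}(d)$, and the number of cellular iterations at the $n'$-th step is controlled so that the product of the degrees used is $\lesssim D_{n'}$ --- this is precisely the content of Output~5, i.e.\ Lemma~5.10 of \cite{GWZ22}. Thus the cellular part contributes an overall factor $\lesssim D_{n'}^{1+o(1)}$, the $o(1)$ absorbing the rounding in the iteration count.

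The single algebraic step that actually produces $S_{n'}$ retains only the tubes lying in the relevant neighbourhood of $Z(Y)\cap S_{n'+1}$ and splits the ball into $O(1)$ pieces, so it introduces no further power of $D_{n'}$. Summing the per-cell incidence bounds over all descendants of $\vec S_{n'+1}$, and using that the child balls of a fixed parent are boundedly overlapping, one gets $\sum_{\vec S_{n'}\prec\vec S_{n'+1}}\#\mathbb{T}[\vec S_{n'}]\lesssim D_{n'}^{1+o(1)}\,\#\mathbb{T}[\vec S_{n'+1}]$; the residual losses --- the $\log(1/\delta)$-type bound on the number of iterations, the slack between $D_{n'}$ and the exact radius ratio in Output~5, and the extra $D_{n'}^{\epsilon'}$ needed to dominate the $o(1)$ power --- are all at most $\delta^{-\epsilon'}D_{n'}^{\epsilon'}$, after which summing over $\vec S_{n'+1}\in\vec{\mathbb{S}}_{n'+1}$ completes the proof. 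Note that the $BL^p_{k,A}$-structure is irrelevant here, since Proposition~\ref{prop KKYproperty2} counts only tubes.

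The step I expect to be the main obstacle is precisely this cell-count for the Kakeya curves of $\phi$: establishing that the implicitly defined curves $\{\mathbf{x}':\nabla_y\phi(\mathbf{x}';y)=\mathrm{const}\}$ cross the zero set of a degree-$d$ polynomial in at most $O_{\phi}(d)$ points, with the constant uniform over the bounded family of such curves and independent of $d$. Granting this, the rest is a transcription of the algorithm of \cite{HRZ} (equivalently, Properties~1--3 in Section~5 of \cite{GWZ22}) and uses only H\"ormander's non-degeneracy condition $(H_1)$; in particular the precise value of the contact order plays no role in this estimate, entering the Kakeya argument only through the polynomial Wolff axiom of Theorem~\ref{thm PWA}.
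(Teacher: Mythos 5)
The paper gives no self-contained proof of Proposition~\ref{prop KKYproperty2}: it is imported as one of Properties I--III of the first algorithm of \cite{HRZ}, adapted to the curved tubes of $\phi$. Measured against that intended argument, the cellular half of your sketch is essentially the right mechanism: the reduction to a per-parent count via the unique ancestor encoded in the tuple $\vec S_{n'}$, the bound that each cellular step of degree $d$ multiplies the (grain, tube) incidence count by $O_{\epsilon'}(d)$, and your identification of the curved B\'ezout bound as the delicate point --- which is indeed handled by approximating each core curve $t\mapsto(\Phi(v,t;y),t)$ by a polynomial curve of degree $O_{\epsilon'}(1)$ to within a small fraction of the tube width, as in \cite{GHI,GWZ22}. (That the accumulated cellular degree during the $n'+1\to n'$ stage equals $D_{n'}$ is essentially the definition of $D_{n'}$ in the algorithm, not the content of Output~5 / Lemma~5.10 of \cite{GWZ22}, but this is only a bookkeeping slip.)

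The genuine gap is the algebraic step. It does not ``split the ball into $O(1)$ pieces'': in the algorithm the dimension drop is accompanied by a drop of spatial scale, and the children of a parent grain $(\vec S_{n'+1},B[\vec S_{n'+1}])$ live in balls $B[\vec S_{n'}]$ of the much smaller radius $\delta_{n'}$, with membership in $\mathbb{T}[\vec S_{n'}]$ given by the tangency condition $|T\cap\mathcal{N}_{\delta}(S_{n'})\cap B[\vec S_{n'}]|\ge\delta_{n'}|T|$ of Output (v) in Section~\ref{KKYM}. Nothing in your sketch bounds how many such child grains a single tube can belong to; a tube lying entirely in $\mathcal{N}_{\delta}(S_{n'})$ (exactly the compression scenario the whole section is designed to fight) satisfies the tangency condition in roughly $\delta_{n'+1}/\delta_{n'}$ of the child balls inside its parent, and this ratio can be nearly $\delta^{-1}$, far beyond the allowed $\delta^{-\epsilon'}D_{n'}^{1+\epsilon'}$. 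So the assertion that this step ``introduces no further power of $D_{n'}$'' is not justified, and a naive version of the step (all $\delta_{n'}$-balls of a cover taken as children) would make the claimed inequality false. In \cite{HRZ} (following \cite{Guth18,GHI}) this multiplicity is controlled only through the specific structure of the algorithm: the scale is reduced incrementally, by factors of the form $r\mapsto r^{1-\epsilon'}$, so that each algebraic step involves only $O(\delta^{-O(\epsilon')})$ sub-balls; within each sub-ball the tubes are sorted into tangent and transverse families; the transverse tubes are counted by a second B\'ezout-type lemma (a tube meets the relevant neighbourhood transversally in at most $\mathrm{poly}(d)$ sub-balls, which for curved cores again requires the polynomial approximation); and only the tangent families are passed down to the lower-dimensional grains. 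Reproducing this tangent/transverse bookkeeping --- not the cellular crossing count --- is the substantive content hiding behind the citation to \cite{HRZ}, and it is the part your proposal omits.
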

	
	\begin{prop}\label{prop KKYproperty3}
		For each $n'\le n-1$,
		\[
		\max_{\vec S_{n'}\in\vec{\mathbb{S}}_{n'}}
		\#\mathbb{T}[\vec S_{n'}]
		\;\lesssim\;
		\delta^{-\epsilon'}\,
		D_{n'}^{-n'+\epsilon'}
		\max_{\vec S_{n'+1}\in\vec{\mathbb{S}}_{n'+1}}
		\#\mathbb{T}[\vec S_{n'+1}].
		\]
	\end{prop}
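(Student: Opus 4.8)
The plan is to obtain Propositions~\ref{prop KKYproperty1}--\ref{prop KKYproperty3} as the running record of the polynomial partitioning algorithm of \cite{HRZ}, adapted to curved tubes as in \cite{GWZ22}; the remarks below concern Proposition~\ref{prop KKYproperty3}, which is where the finite contact order hypothesis is used. The first two properties are combinatorial bookkeeping requiring only H\"ormander's non-degeneracy $(H_1)$: Proposition~\ref{prop KKYproperty1} iterates the redistribution of the $k$-broad norm over the grains produced by alternating the cellular and algebraic cases of polynomial partitioning, and Proposition~\ref{prop KKYproperty2} encodes the B\'ezout-type fact that a single $\delta$-tube descends into at most $O(\delta^{-\epsilon'}D_{n'}^{1+\epsilon'})$ of the $n'$-dimensional grains branching off a fixed $(n'+1)$-dimensional grain. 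Curvature enters only in Proposition~\ref{prop KKYproperty3}, through the polynomial Wolff axiom Theorem~\ref{thm PWA}, which plays the role that the Katz--Rogers polynomial Wolff axiom for lines plays in \cite{HRZ}.

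For Proposition~\ref{prop KKYproperty3} I would treat the cellular and algebraic contributions to the descent from dimension $n'+1$ to dimension $n'$ separately. A cellular step, applied with a degree-$d$ polynomial to a family concentrated near the current $(n'+1)$-dimensional variety, deposits a $\gtrsim d^{-(n'+1)}$-fraction of the mass in every cell; after the dyadic pigeonholing that makes every surviving grain carry a common number $N$ of tubes, and after accounting for the factor $d^{-1}$ lost in each tube's length inside a cell, there are $\sim D_{n'}^{\,n'+1}$ such grains, so Proposition~\ref{prop KKYproperty2} gives $N\,D_{n'}^{\,n'+1}\lesssim \delta^{-\epsilon'}D_{n'}^{1+\epsilon'}\#\mathbb{T}[\vec S_{n'+1}]$, i.e.\ $N\lesssim\delta^{-\epsilon'}D_{n'}^{-n'+\epsilon'}\#\mathbb{T}[\vec S_{n'+1}]$; since $N$ is the common, hence maximal, grain count this settles the grains produced cellularly. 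For a grain $S_{n'}$ produced at an algebraic step --- where a positive fraction of the tubes is declared tangent to the $n'$-dimensional variety $Z(P)\cap S_{n'+1}$, which is covered by balls of radius $\delta_{n'}$ on which it splits into pieces of degree $O(1)$ --- the cellular counting is unavailable and one must instead rule out that these tangent tubes re-concentrate in a $\delta_{n'}$-ball rather than spreading along $Z(P)\cap S_{n'+1}$. This is exactly where Theorem~\ref{thm PWA} is used: rescaling the ball $B[\vec S_{n'}]$ of radius $\delta_{n'}$ to the unit ball turns the $\delta$-tubes into $(\delta/\delta_{n'})$-tubes and $\phi$ into a phase $\phi'$ which still satisfies $(H_1)$, $(H_2)$ and has contact order $\le l$ at the relevant point --- this stability of the finite contact order condition under rescaling is precisely the computation carried out in the proof of Proposition~\ref{prop scalingPWA} --- so Theorem~\ref{thm PWA} applied to $\phi'$ and the rescaled image of $\mathcal{N}_\delta(S_{n'})\cap B[\vec S_{n'}]$, a semi-algebraic set of complexity $O(1)$ and Lebesgue measure $\lesssim(\delta/\delta_{n'})^{\,n-n'}$, gives
\[
  \#\mathbb{T}[\vec S_{n'}]\lesssim_{\epsilon'}\Bigl(\tfrac{\delta}{\delta_{n'}}\Bigr)^{n-n'}\Bigl(\tfrac{\delta}{\delta_{n'}}\Bigr)^{1-n-\epsilon'}=\Bigl(\tfrac{\delta}{\delta_{n'}}\Bigr)^{1-n'-\epsilon'}.
\]
Combining this with $D_{n'}\delta_{n'}\lesssim\delta_{n'+1}$, $\delta_{n'}\ge\delta^{1-\epsilon'}$ and the lower bound on $\#\mathbb{T}[\vec S_{n'+1}]$ inherited from the cellular balancing of the level above yields $\#\mathbb{T}[\vec S_{n'}]\lesssim\delta^{-\epsilon'}D_{n'}^{-n'+\epsilon'}\#\mathbb{T}[\vec S_{n'+1}]$ for these grains as well, which is the assertion.

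The step I expect to be the main obstacle is the precise bookkeeping of the power $D_{n'}^{\,n'}$: one has to propagate the grain tube counts uniformly through an entire cellular run followed by an algebraic step, keeping the $\delta^{-\epsilon'}$ losses from the dyadic pigeonholing under control and arranging the $n'$-dimensional grains to have bounded degree at scale $\delta_{n'}$ so that Theorem~\ref{thm PWA} applies with an absolute constant --- this is carried out as in \cite{HRZ}, \cite{GWZ22}, \cite{Guth18}. The only point at which the straight-line structure of ordinary Kakeya must be replaced by genuine curvature information is the use of Theorem~\ref{thm PWA} at the algebraic steps, and its validity at every dyadic scale --- equivalently, the need to work through Proposition~\ref{prop scalingPWA} rather than a single-scale statement --- is precisely what the finite contact order hypothesis provides.
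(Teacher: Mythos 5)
Your proposal misplaces where the finite contact order hypothesis enters, and this is a genuine gap rather than a different route. Proposition \ref{prop KKYproperty3} is the curved analogue of Property III of the first algorithm in \cite{HRZ} (compare Proposition \ref{prop Property 3}, imported from \cite{GWZ22} in the oscillatory setting): it is a bookkeeping property of the partitioning algorithm valid for \emph{every} H\"ormander phase, proved by cellular counting alone --- inside an $(n'+1)$-dimensional grain a degree-$d$ cellular step produces $\sim d^{n'+1}$ cells of comparable mass while each tube enters only $O(d)$ of them, so the per-cell maximum drops by $d^{-n'}$, these losses accumulate to $D_{n'}^{-n'}$, and the algebraic (tangency) step that actually creates the level-$n'$ grains cannot increase the per-grain count. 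No polynomial Wolff axiom is used. The architecture of Section \ref{KKYM} forces this: \eqref{eq KKYT1}, obtained by iterating Proposition \ref{prop KKYproperty3}, is stated to hold for general phases (see the closing remark of the section), and the contact-order input enters \emph{separately}, via Proposition \ref{prop scalingPWA} applied to the rescaled grain in \eqref{eq KKYS_{n'}}--\eqref{eq KKYuniform}, yielding \eqref{eq KKYT2}, which is then geometrically averaged with \eqref{eq KKYT1}. If Proposition \ref{prop KKYproperty3} itself required Theorem \ref{thm PWA}, that averaging and that remark would be vacuous. Note also that every grain in $\vec{\mathbb{S}}_{n'}$ arises from an algebraic step (cellular steps do not lower the dimension), so your split into ``cellular grains'' and ``algebraic grains'' does not match the algorithm; the $D_{n'}^{-n'}$ gain for all level-$n'$ grains comes from the cellular iterations preceding the algebraic step.

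Independently of the structural issue, your treatment of the algebraic case does not close. First, the rescaled polynomial Wolff axiom is not the clean unit-scale statement you invoke: the contact order condition is not preserved under the rescaling with constants uniform in $\lambda_{n'}$, which is precisely why Proposition \ref{prop scalingPWA} carries the loss $\lambda_{n'}^{\,l-(n-1)}=\delta_{n'}^{\,n-1-l}$ in the branch involving $\mathcal{L}^{n}(S)$; your bound $\#\mathbb{T}[\vec S_{n'}]\lesssim(\delta/\delta_{n'})^{n-n'}(\delta/\delta_{n'})^{1-n-\epsilon'}$ drops this factor and corresponds to neither branch of the minimum. Second, the rescaled count controls the number of \emph{short} tubes $\mathbb{T}_{S_{n'}}$, and each short tube corresponds to $\sim\delta_{n'}^{-(n-1)}$ tubes of $\mathbb{T}[\vec S_{n'}]$, the multiplicity recorded in \eqref{eq cor}, which you also omit. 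Third, even with a correct absolute upper bound on $\#\mathbb{T}[\vec S_{n'}]$, the proposition is a \emph{relative} estimate, and your final step appeals to ``the lower bound on $\#\mathbb{T}[\vec S_{n'+1}]$ inherited from the cellular balancing of the level above''; no such lower bound is available (a level-$(n'+1)$ grain may be tangent to very few tubes), so the ratio $D_{n'}^{-n'+\epsilon'}$ cannot be extracted this way. Your cellular counting sketch is essentially the right mechanism, but the proof must run entirely on it, as in \cite{HRZ} and \cite{GWZ22}, with Theorem \ref{thm PWA} and Proposition \ref{prop scalingPWA} reserved for the separate tangential bound \eqref{eq KKYuniform}.
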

	
	The recursive process terminates when the stopping condition \cite[Section 7, The first algorithm]{HRZ} is met. We assume it stops at the $m$‑dimensional step, with $m\ge k$. For each $\vec S_m$, let $\mathcal{O}[\vec S_m]$ be the final collection of cells associated to $\vec S_m$ (see \cite[The first algorithm]{HRZ}), and set $\mathcal{O}=\bigcup_{\vec S_m\in\vec{\mathbb{S}}_m}\mathcal{O}[\vec S_m]$.
	
	Combining Propositions \ref{prop KKYproperty1} and \ref{prop KKYproperty2} with Properties I-III from \cite[Section 7, The first algorithm]{HRZ}, we obtain
	\begin{equation}\label{eq keyestimate}
		\begin{split}
			\bigl\|\sum_{T\in\mathbb{T}}\chi_T\bigr\|_{BL^p_{k,A}(\mathbb{R}^n)}
			&\lesssim
			\delta^{-O(\epsilon')}
			C(\vec D_m;\vec\delta_m)\,
			[\delta^n\#\mathbb{T}]^{1-\iota_m}\\
			&\quad\times
			\Biggl(
			\sum_{O\in\mathcal{O}}
			\bigl\|\sum_{T\in\mathbb{T}[O]}\chi_T\bigr\|_{BL^{p_m}_{k,A_{m-1}}(O)}^{p_m}
			\Biggr)^{\tfrac{\iota_m}{p_m}}.
		\end{split}
	\end{equation}
	
	Iterating Proposition \ref{prop KKYproperty2} as in \cite[Section 8]{HRZ} yields
	\begin{equation}\label{eq KKY1and2}
		\begin{split}
			\bigl\|\sum_{T\in\mathbb{T}}\chi_T\bigr\|_{BL^p_{k,A}(\mathbb{R}^n)}
			&\lesssim
			\delta^{-\epsilon'}
			\prod_{i=m-1}^{n-1}
			\Bigl(\frac{\delta_i}{\delta}\Bigr)^{\iota_{i+1}-\iota_i}
			D_{i+1}^{\iota_{i+1}-\bigl(1-\tfrac1p\bigr)+O(\epsilon')}\\
			&\quad\times
			\bigl(\max_{O\in\mathcal{O}}\#\mathbb{T}[O]\bigr)^{1-\tfrac1p}
			\,(\delta\sum_{T\in\mathbb{T}}|T|)^{\tfrac1p},
		\end{split}
	\end{equation}
	where $\delta_{m-1}=\delta$.
	
	Now we need to estimate $\max_{O \in \mathcal{O}} \#\mathbb{T}[O]$ under the finite contact order assumption $l$. Through repeated applications of Proposition \ref{prop KKYproperty3}, we obtain
	\begin{equation}\label{eq repeated}
		\begin{split}
			\max_{O\in \mathcal{O}}\#\mathbb{T}[O]
			&\;\lesssim\;
			\delta^{-\epsilon'}\,
			\Bigl(\prod_{i=m-1}^{n'-1}D_{i}^{-i+\epsilon'}\Bigr)\\
			&\quad\times
			\max_{\vec S_{n'}\in \vec{\mathbb{S}}_{n'}}\#\mathbb{T}[\vec S_{n'}],
		\end{split}
	\end{equation}
	for any $m\le n'\le n$.
	
	Without loss of generality, assume $S_{n'}\subset{B}_{r_{n'}}^n$, so that the semialgebraic set $\mathcal{N}_{\delta}(S_{n'})\cap B[\vec S_{n'}]$ is centered at the origin. 
	We use $\mathbb{T}_{S_{n'}}$ to denote the collection of tubes that satisfy the following properties:
	\begin{itemize}
		\item $|T_{1} \cap T_{2}| \leq \frac{1}{2}|T_1|$ for any $T_{1}, T_{2} \in \mathbb{T}_{S_{n'}}$;
		\item each tube has length $\delta_{n'}$ and width $\delta$;
		\item each tube is contained in $\mathcal{N}_{\delta}(S_{n'})\cap B[\vec S_{n'}]$.
	\end{itemize}
	Each of $T_{y, v}\in \mathbb{T}_{S_{n'}}$ has the form
	\[
	T_{y,v} = \{(x,t): |x - \Phi(v,t;y)| \le \delta,\; t \in [0,\delta_{n'}]\}.
	\]
	After rescaling the tubes to unit length, we have
	\begin{equation}\label{eq KKYscalingtube}
		T_{y,v}^{\circ}
		=\Bigl\{(x,t):\bigl|x-\tfrac{1}{\delta_{n'}}\Phi(\delta_{n'}v,\delta_{n'}t;y)\bigr|\le\tfrac{\delta}{\delta_{n'}},\ t\in[0,1]\Bigr\}.
	\end{equation}
	We rescale the semialgebraic set $S_{n'}$ so that its tangential length becomes $1$ and its normal length becomes $\tfrac{\delta}{\delta_{n'}}$, denoting the rescaled set by $S_{n'}^{\circ}$.
	Applying Proposition~\ref{prop scalingPWA} with $\lambda_{n'} = \tfrac{1}{\delta_{n'}}$, $S = S_{n'}^{\circ}$, and $\kappa = \tfrac{\delta}{\delta_{n'}}$ shows that for any $\epsilon' > 0$, there exists $C_{\epsilon',E}$ such that
	\begin{equation}\label{eq KKYS_{n'}}
		\#\mathbb{T}_{{S}_{n'}}
		\le C_{\epsilon',E}\,\min\Bigl\{\mathcal{L}^n(S_{n'}^\circ)\,\bigl(\tfrac1{\delta_{n'}}\bigr)^{l-(n-1)},\;\mathcal{L}^n\bigl(\mathcal{N}_{\delta_{n'}}(S_{n'}^\circ)\bigr)\}\,(\frac{\delta}{\delta_{n'}})^{-(n-1)-\epsilon'},
	\end{equation}
	Since every short tube in $\mathbb{T}_{S_{n'}}$ corresponds to $(\frac{1}{\delta_{n'}})^{n-1}$ tubes in $\mathbb{T}[\vec S_{n'}]$, we have 
	\begin{equation}\label{eq cor}
		\#\mathbb{T}[\vec S_{n'}]\lesssim (\frac{1}{\delta_{n'}})^{n-1}\#\mathbb{T}_{S_{n'}}.
	\end{equation}
	It is easy to compute that
	\begin{equation}\label{eq leb1}
		\mathcal{L}^n(S_{n'}^\circ)
		=\bigl(\tfrac{\delta}{\delta_{n'}}\bigr)^{n-n'},
	\end{equation}
	\begin{equation}\label{eq leb2}
		\mathcal{L}^n\bigl(\mathcal{N}_{\delta_{n'}}(S_{n'}^\circ)\bigr)
		=\bigl(\tfrac{\delta}{\delta_{n'}}+\delta_{n'}\bigr)^{n-n'}.
	\end{equation}
	Combining \eqref{eq KKYS_{n'}}-\eqref{eq leb2} yields
	\[
	\#\mathbb{T}[\vec S_{n'}]
	\le C_{\epsilon',E}\,
	\min\Bigl\{
	(\tfrac{\delta}{\delta_{n'}})^{n-n'}\,(\tfrac1{\delta_{n'}})^{l-(n-1)},\;
	(\tfrac{\delta}{\delta_{n'}}+\delta_{n'})^{n-n'}
	\Bigr\}
	\,\delta^{-(n-1)-\epsilon'}.
	\]
	For all $\delta_{n'}$, we have
	\begin{equation}\label{eq KKYuniform}
		\#\mathbb{T}[\vec S_{n'}]
		\le C_{\epsilon,E}\,
		(\tfrac{\delta}{\delta_{n'}})^{\frac{(n-n')^{2}}{l+1-n'}}
		\,\delta^{-(n-1)-\epsilon'}.
	\end{equation}
	
	For general H\"ormander phases $\phi$, we have
	\begin{equation}\label{eq KKYT1}
		\max_{O\in\mathcal{O}}\#\mathbb{T}[O]
		\lesssim \delta^{-\epsilon'}
		\Bigl(\prod_{i=m-1}^{n-1}D_i^{-i+\epsilon'}\Bigr)
		\,\delta^{-(n-1)}.
	\end{equation}
	If $\phi$ has finite contact order $l$, then
	\begin{equation}\label{eq KKYT2}
		\max_{O\in\mathcal{O}}\#\mathbb{T}[O]
		\lesssim \delta^{-\epsilon'}
		\Bigl(\prod_{i=m-1}^{n'-1}D_i^{-i+\epsilon'}\Bigr)
		\,\delta^{-(n-1)}
		\bigl(\tfrac{\delta}{\delta_{n'}}\bigr)^{\frac{(n-n')^{2}}{l+1-n'}}.
	\end{equation}
	Since \eqref{eq KKYT2} improves the bound for any $m\le n'\le n-1$, we henceforth set $n'=n-1$ for convenience.
	Let $\gamma\in[0,1]$ and take the geometric average of \eqref{eq KKYT1} and \eqref{eq KKYT2}:
	\begin{equation}\label{eq KKYaverage}
		\max_{O\in\mathcal{O}}\#\mathbb{T}[O]
		\;\lesssim\;
		\delta^{-\epsilon'}\,
		\Bigl(\prod_{i=m-1}^{n-2}D_{i}^{-i+\epsilon'}\Bigr)\,
		\bigl(\tfrac{\delta_{n'}}{\delta}\bigr)^{-\frac{\gamma}{l+2-n}}\,
		D_{n-1}^{-(n-1)(1-\gamma)}\,
		\delta^{-(n-1)}.
	\end{equation}
	
	Combining \eqref{eq KKY1and2} and \eqref{eq KKYaverage} gives
	\begin{equation}\label{eq KKYcombined}
		\begin{split}
			\bigl\|\sum_{T\in\mathbb{T}}\chi_T&\bigr\|_{BL^p_{k,A}(\mathbb{R}^n)}
			\;\lesssim\;
			\delta^{-O(\epsilon')}\,
			\bigl(\tfrac{\delta_{n-1}}{\delta}\bigr)^{1-\iota_{n-1}}
			\prod_{i=m-1}^{n-2}\bigl(\tfrac{\delta_i}{\delta}\bigr)^{\iota_{i+1}-\iota_i}
			\prod_{i=m-1}^{n-2}D_i^{\,\iota_{i+1}-(i+1)(1-\tfrac1p)}\\
			&\quad\times
			\bigl(\tfrac{\delta_{n-1}}{\delta}\bigr)^{-\frac{\gamma}{l+2-n}\,(1-\tfrac1p)}
			\,D_{n-1}^{\,1 - \bigl[(n-1)(1-\gamma)+1\bigr](1-\tfrac1p)}
			\,\delta^{-\bigl(n-1-\tfrac np\bigr)}
			\bigl(\sum_{T\in\mathbb{T}}|T|\bigr)^{1/p}.
		\end{split}
	\end{equation}
	Comparing with the form \eqref{eq KBroad}, it remains to control the coefficient
	\begin{equation}\label{eq KKYCoeff}
		\begin{split}
			&\bigl(\tfrac{\delta_{n-1}}{\delta}\bigr)^{\,1-\iota_{n-1}
				-\frac{\gamma}{l+2-n}\,(1-\tfrac1p)}\,D_{n-1}^{\,1 - \bigl[(n-1)(1-\gamma)+1\bigr](1-\tfrac1p)}\\
			&\quad\times
			\prod_{i=m}^{n-2}
			\bigl(\tfrac{\delta_i}{\delta}\bigr)^{\iota_{i+1}-\iota_i}
			\prod_{i=m-1}^{n-2}
			D_i^{\,\iota_{i+1}-(i+1)(1-\tfrac1p)}.
		\end{split}
	\end{equation}
	Recalling the definition of $\iota_i$ and the monotonicity $\iota_{i+1}\ge\iota_i$, we set
	\[
	\iota_m=\iota_{m+1}=\cdots=\iota_{n-1}
	=m\Bigl(1-\frac1p\Bigr).
	\]
	Consequently, for all $i$ with $m-1\le i\le n-2$,
	\[
	\iota_{i+1}-\iota_i\le0
	\quad\text{and}\quad
	\iota_{i+1}-(i+1)\Bigl(1-\frac1p\Bigr)\le0.
	\]
	To ensure non-positive exponents in the remaining terms, we impose
	\[
	1-\iota_{n-1}-\frac{\gamma}{\,l+2-n\,}\Bigl(1-\frac1p\Bigr)\le0
	\quad\text{and}\quad
	1-\bigl[(n-1)(1-\gamma)+1\bigr]\Bigl(1-\frac1p\Bigr)\le0.
	\]
	Solving these yields
	\[
	\gamma\ge\frac{(n-m)(l+2-n)}{(n-1)(l+2-n)+1}
	\quad\text{and}\quad
	p\ge\frac{(l+2-n)(n-1)m+n}{(l+2-n)(n-1)(m-1)+n-1}.
	\]
	Since the process terminates at the $m$‑dimensional step for all $m\ge k$, taking the intersection over $m\ge k$ gives
	\[
	p\ge\frac{(l+2-n)(n-1)k+n}{(l+2-n)(n-1)(k-1)+n-1},
	\]
	completing the proof of Proposition \ref{prop KKYBroad}.
	
	\begin{remark}
		For general H{\"o}rmander phases, only \eqref{eq KKYT1} holds, yielding the broad estimate for $p\ge \tfrac{k}{k-1}$. Hence, by Proposition \ref{prop KKYImply}, curved Kakeya sets have Hausdorff dimension at least $\lceil\tfrac{n+1}{2}\rceil$ for all $n\ge3$. In \cite{GHI}, the same sharp bound is obtained for positive-definite H{\"o}rmander phases, but here no positive-definiteness assumption is required.
	\end{remark}

	\section{Stability analysis and proof of Theorem \ref{thm generic}}\label{generic}
	
	In this section, we prove that the finite contact order condition is open and dense, thereby completing the proof of Theorem \ref{thm generic}. In particular, we show that the set of all functions with contact order $ \le l $ forms an open and dense subset in the space of H\"ormander phases.
	
	We denote the collection of all H\"ormander phases by $\mathbf H$ and the subset of phase functions with contact order $ \le l $ by $\mathbf C_l$. Since $\mathbf H^+$ is open in $\mathbf H$, it suffices to prove the following.
	
	\begin{prop}
		For any $l \ge A(n)$, the set $\mathbf C_l$ is open in $\mathbf H$. Moreover, $\mathbf C_l$ is dense in $(\mathbf H,\tau)$, where $\tau$ denotes the topology induced by the family of seminorms
		\[
		\phi \;\mapsto\; \|\partial^\alpha \phi\|_\infty,
		\]
		with multi-indices $\alpha \in \{0,1,2,\dots\}^{2n-1}$.
	\end{prop}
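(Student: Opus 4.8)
The plan is to treat openness and density separately, since the two rely on different mechanisms. For openness, the key observation is that the contact order $\le l$ condition is a statement about the row rank of the matrix $\mathcal{D}$ (built from $t$-derivatives of the functions $D_{i,j}(t)$ and their admissible products at $t=0$). The entries of $\mathcal{D}$ are polynomials in the jets of $\phi$ up to some finite order $N=N(l,n)$ at the base point, hence depend continuously on $\phi$ in the $C^N$ topology. Having row rank equal to $A(n)$ (the maximal possible, since $\mathcal{D}$ has exactly $A(n)$ rows by the count in Proposition \ref{prop scalingPWA} of the Appendix) is equivalent to the nonvanishing of at least one $A(n)\times A(n)$ minor. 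This is an open condition: if $\phi_0$ has contact order $\le l$ at the origin, some minor $M(\phi_0)\ne0$, and by continuity $M(\phi)\ne0$ for all $\phi$ in a $\tau$-neighborhood of $\phi_0$; such $\phi$ then also have contact order $\le l$ at the origin. (One should note that the conditions $(H_1)$, $(H_2)$ defining $\mathbf{H}$ are themselves open, so the intersection with $\mathbf{H}$ is still open.) This step is essentially bookkeeping.

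For density, the plan is: given an arbitrary $\phi\in\mathbf H$ and $\varepsilon>0$, produce $\psi\in\mathbf C_l$ with $\|\partial^\alpha(\phi-\psi)\|_\infty<\varepsilon$ for all relevant $\alpha$. I would work with the normal form from the excerpt,
\[
\phi(\mathbf{x};y)=\langle x,y\rangle + t\,\langle y,Ay\rangle + O(|\mathbf{x}|^2|y|^2+|t||y|^3),
\]
and add a small perturbation supported near the origin of the form $\eta\,\sum_{\beta} c_\beta\, p_\beta(\mathbf{x};y)$, where $\eta$ is a fixed cutoff, the $p_\beta$ are a finite list of monomials in $(\mathbf{x},y)$ chosen to independently move the entries of $\mathcal{D}$, and the coefficients $c_\beta$ are taken small. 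Since $(H_1)$ and $(H_2)$ are open, for $|c|$ small enough the perturbed phase still lies in $\mathbf H$. The heart of the matter is then to show that the map $c\mapsto \mathcal{D}(\phi+\eta\sum c_\beta p_\beta)$ can be made to have an image meeting $\{\text{rank}=A(n)\}$ for arbitrarily small $c$ — equivalently, that some $A(n)\times A(n)$ minor of $\mathcal D$, viewed as a real-analytic (indeed polynomial) function of $c$, is not identically zero near $c=0$. Once that is established, the minor vanishes only on a proper analytic subvariety, so generic small $c$ works, and we can pick $c$ with $|c|$ as small as we like; scaling the cutoff and using that $\phi\mapsto\|\partial^\alpha\phi\|_\infty$ controls the perturbation gives the required approximation.

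The main obstacle — and the step I would spend the most care on — is precisely verifying that the relevant minor is not identically zero in $c$. Concretely one must exhibit, for each of the $A(n)$ rows of $\mathcal D$ (the row $(\mathbf D^{(h)}(0))_h$ and the rows $((D_{i_1,j_1}\cdots D_{i_k,j_k})^{(h)}(0))_h$ for admissible index patterns, $1\le k\le n-2$), enough freedom in the Taylor coefficients of $\phi$ along the solution curve $X_0(t)$ to realize an arbitrary target for the first $l$ $t$-derivatives at $0$, simultaneously and independently across rows. The subtlety is that the $D_{i,j}(t)$ are not free functions: they are second $y$-derivatives of the normalized phase evaluated along the curve $t\mapsto X_0(t)$, which itself is determined by $\phi$ through the implicit relation $\nabla_y\phi(X_0(t),t;0)=\nabla_y\phi(0,0;0)$, so perturbing $\phi$ moves both the functions and the curve. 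I would handle this by choosing perturbation monomials of the form $t^a y_i y_j \cdot(\text{higher order in }\mathbf x)$ that affect $\partial_{y_i}\partial_{y_j}\phi$ along $t\mapsto(0,t)$ directly while contributing nothing to $\nabla_y\phi$ at $y=0$ (hence leaving the curve $X_0$ unchanged to the needed order), so that the induced change in $D_{i,j}^{(h)}(0)$ is a triangular, invertible linear function of the added coefficients. A short induction on $h$ (the derivative order) then shows the $k=1$ rows can be set arbitrarily; the higher-$k$ rows consist of products of the $D_{i,j}$ and their derivatives, so once the $k=1$ rows are in "general position" the product rows are generically independent as well, which is exactly the non-identical-vanishing of the top minor. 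Finally, combining openness with density — and intersecting with $\mathbf H^+$, which is open, to obtain $\mathbf C^+$ as in Theorem \ref{thm Horgeneric} — completes the proof of Theorem \ref{thm generic} via Theorem \ref{thm Kakeya} with $l=A(n)$.
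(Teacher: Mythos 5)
Your proposal follows essentially the same route as the paper: openness via continuity of the $A(n)\times A(n)$ minor of $\mathcal{D}$ in finitely many jets of $\phi$, and density by viewing that minor as a polynomial in the jet variables whose zero locus is a proper subvariety, then realizing a nearby nonvanishing jet by a small perturbation of $\phi$ that stays in $\mathbf H$. The paper carries out the density step abstractly on the jet vector $Y\in\mathbb{R}^{N}$ (asserting, much as you do, that the polynomial is not identically zero rather than verifying it in detail), so your explicit monomial perturbations and the curve-invariance bookkeeping are simply a more hands-on implementation of the same argument.
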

	
	\begin{proof}
		Fix the dimension $n$. To verify the contact order condition, we must check that the following matrix has full rank:
		\[
		\mathcal D =
		\begin{pmatrix}
			D_{i,j}^{(1)}(0)                & \cdots & D_{i,j}^{(l)}(0) \\[6pt]
			\vdots                          & \ddots & \vdots           \\[6pt]
			\bigl(D_{i_1,j_1}D_{i_2,j_2}\bigr)^{(1)}(0) & \cdots & \bigl(D_{i_1,j_1}D_{i_2,j_2}\bigr)^{(l)}(0) \\[6pt]
			\vdots                          & \ddots & \vdots           \\[6pt]
			\bigl(D_{i_1,j_1}D_{i_2,j_2}\cdots D_{i_k,j_k}\bigr)^{(1)}(0) 
			& \cdots & \bigl(D_{i_1,j_1}D_{i_2,j_2}\cdots D_{i_k,j_k}\bigr)^{(l)}(0) \\[6pt]
			\vdots                          & \ddots & \vdots           \\[6pt]
			\bigl(D_{i_1,j_1}D_{i_2,j_2}\cdots D_{i_{n-2},j_{n-2}}\bigr)^{(1)}(0)
			& \cdots & \bigl(D_{i_1,j_1}D_{i_2,j_2}\cdots D_{i_{n-2},j_{n-2}}\bigr)^{(l)}(0) \\[6pt]
			\vdots                          & \ddots & \vdots           \\[6pt]
			\textbf{D}^{(1)}(0)                      & \cdots & \textbf{D}^{(l)}(0)
		\end{pmatrix}.
		\]
		It is straightforward to see that if $\phi$ has contact order $\le l$ at the origin, then any phase function in a small enough neighborhood of $\phi$ also has contact order $\le l$. Hence the contact order condition is stable. Since the determinant depends continuously on the matrix entries, the set of phase functions with contact order at most $l$ is open in $C^\infty( B_{\varepsilon_0}^n)$. Moreover, $\mathbf H$ is open in $C^\infty( B_{\varepsilon_0}^n)$, so $\mathbf C_l$ is open in $\mathbf H$.
		
		For any $l_1 > l_2 \ge A(n)$, one has $\mathbf C_{l_2} \subset \mathbf C_{l_1}$. Thus it suffices to prove that $\mathbf C_{A(n)}$ is dense in $\mathbf H$ under the topology $\tau$. Without loss of generality, we may assume
		\[
		\phi(\mathbf x; y) \;=\; \langle x,y\rangle \;+\; t\,\langle y, A y\rangle \;+\; O\bigl(|\mathbf x|^2\,|y|^2 \;+\; |t|\,|y|^3\bigr).
		\]
		Since $D_{ij}(0)=0$ for all $1 \le i,j \le n-1$, when $l = A(n)$ the determinant $\det \mathcal D$ can be viewed as a polynomial $P$ in 
		\[
		\frac{(n-1)\,n}{2}\,\cdot A(n)
		\]
		variables consisting of the first $A(n)$ $t$-derivatives of each distinct Hessian entry. Its zero locus has dimension at most 
		\[
		\frac{(n-1)\,n}{2}\,\cdot A(n) \;-\; 1.
		\]
		For convenience, set
		\[
		N \;=\; \frac{(n-1)\,n}{2}\,\cdot A(n).
		\]
		For any $\phi$, the Hessian matrix $\nabla_y^2\phi$ determines a vector $Y\in \mathbb R^N$ whose components are the first $A(n)$ $t$-derivatives of each distinct Hessian entry. Given $\phi$, for any $\epsilon>0$ there exists $\overline Y \in  B_\epsilon^N(Y)$ with $P(\overline Y)\neq 0$, since the zero locus of $P$ has codimension at least one. Equivalently, for any $\phi\in \mathbf H$ and any $\epsilon>0$, there is $\bar\phi\in B_\epsilon(\phi)$ such that the contact order of $\bar\phi$ equals $A(n)$, where $B_\epsilon(\phi)$ denotes the $\epsilon$-open ball centered at $\phi$ in the $\tau$-topology.
	\end{proof}

	\section{Applications to Nikodym problems on Riemannian manifolds}\label{Application}
	
	In this section, we apply Theorem \ref{thm improvement} and Theorem \ref{thm Kakeya} to certain oscillatory integral operators and to the Nikodym problem on Riemannian manifolds.
	
	Fix a Riemannian manifold $\mathcal M$ and work locally. By rescaling the metric if necessary, we may assume we are inside a fixed unit ball $ B_{1}^n(0)\subset\mathbb R^n$ equipped with a Riemannian metric 
	$
	g =\{g_{ij}(\mathbf x)\}_{1\le i,j\le n}
	$
	that is quantitatively close to the flat metric. Given a constant $\varepsilon_{g}\in(0,1]$, fix two points $\textbf{x}_{0}=(0,\cdots,0)$ and $\textbf{y}_{0}=(0,\cdots ,0,\varepsilon_{g})$, where the Riemannian distance of two points less than the injectivity radius. Let 
	$a(\mathbf x,\mathbf y)$
	be a smooth function supported in $ B_{\varepsilon_g/10}^n(\textbf{x}_{0})\times B_{\varepsilon_g/10}^n(\textbf{y}_{0})$.
	
	The following operator lies at the core of many harmonic‐analysis problems on Riemannian manifolds—for instance, it is related to the spectral projection operator (see Lemma 5.1.3 in \cite{sogge2017fourier}), and to the Riemannian distance-set problem \cite{IosevichLiuXi2022}:
	\[
	T_\lambda^\mathcal M f(\mathbf x)
	\;=\;
	\int_{\mathcal M} e^{i\lambda\,d_g(\mathbf x,\mathbf y)}\,a(\mathbf x,\mathbf y)\,d\mathbf y,
	\]
	where $d_g(\mathbf x,\mathbf y)$ denotes the Riemannian distance.
	
	It is natural to ask: for which $p$ does
	\begin{equation}\label{eq BZ}
		\|\,T_\lambda^\mathcal M f\|_{L^p(\mathcal M)}
		\;\lessapprox\;
		\lambda^{-\frac n p}\,\|f\|_{L^p(\mathcal M)},
		\qquad
		\lambda\ge1,
	\end{equation}
	hold?  When $g$ is the flat metric, one expects \eqref{eq BZ} to hold for all $p\ge \tfrac{2n}{n-1}$, which is essentially equivalent to the Bochner--Riesz conjecture in Euclidean space.
	
	Minicozzi and Sogge \cite{MS} studied this operator on general Riemannian manifolds and showed that the range of exponents $p$ for which \eqref{eq BZ} can hold is strictly smaller than the conjectured Euclidean range. The boundedness of $T_\lambda^\mathcal M$ is closely connected to the Kakeya compression phenomenon, which in this setting refers to the compression of Nikodym sets.
	
	\begin{definition}[Nikodym sets on manifolds]\label{Def 8.1}
		Let $\Omega\subset B^n_1(0)$ be a Borel set and $0<\lambda<1$.  Define
		\begin{multline}
			\Omega_{\lambda}^{\star}
			\,=\,
			\bigl\{
			y \in  {B}_1^{n}(0)
			:\,
			\exists\, \gamma_{y} \,
			\text{with }\,
			\lvert \gamma_{y} \cap \Omega \rvert
			\,\geq\,\lambda
			\lvert \gamma_{y}\rvert,\\  \text{ where $\gamma_{y}$ is a unit geodesic segment passing through } y\text{} 
			\bigr\}.
		\end{multline}
		We say that $\Omega$ is a \textbf{Nikodym set} if, for all $\lambda$ sufficiently close to $1$, the set $\Omega_\lambda^\star$ has positive measure.
	\end{definition}
	
	On any Riemannian manifold $\mathcal M$, Nikodym sets must have Hausdorff dimension at least $\bigl\lceil\tfrac{n+1}2\bigr\rceil$, and this lower bound is sharp. Minicozzi and Sogge \cite{MS} showed that, even when $g$ is flat, one can perturb the metric so that Nikodym sets concentrate on a $\bigl\lceil\tfrac{n+1}2\bigr\rceil$-dimensional submanifold.  More precisely, given the Euclidean metric $g$ and any $\epsilon>0$, there is a Riemannian metric $\tilde g$ with 
	\[
	\|\tilde g - g\|_{C^k}\le\epsilon,
	\qquad
	k\ge1,
	\]
	such that Nikodym sets in $( B^n_1(0),\tilde g)$ lie on a $\bigl\lceil\tfrac{n+1}2\bigr\rceil$-dimensional totally geodesic submanifold.  Later, Sogge, the third author, and Xu \cite{Sogge-Xi} proved that if there exists any totally geodesic submanifold of dimension $\bigl\lceil\tfrac{n+1}2\bigr\rceil$, then—after a suitable perturbation of the metric—the Kakeya compression phenomenon occurs.
	
	A natural question arises: Under what curvature conditions can a Riemannian manifold overcome the Kakeya compression phenomenon? Sogge \cite{Sogge99} and Xi \cite{Xi17} proved that the Hausdorff dimension of Nikodym sets on constant curvature manifolds $M^n$ is at least $\tfrac{n+2}{2}$. Dai, Dong, Guo, and Zhang \cite{DGGZ24} showed that the Riemannian distance function satisfies Bourgain’s condition if and only if the manifold has constant curvature, and they improved the lower bound on the Hausdorff dimension of Nikodym sets in higher dimensions for such manifolds. Gao, the second author, and the third author proved that the Kakeya conjecture implies the Nikodym conjecture on Riemannian manifolds with constant curvature.
	
	Sogge \cite{Sogge99} further showed that on $3$‑dimensional manifolds with chaotic curvature, Nikodym sets have Minkowski dimension at least $\tfrac{7}{3}$ and Hausdorff dimension at least $\tfrac{9}{4}$. Dai, Dong, Guo, and Zhang then demonstrated that for three‑dimensional analytic manifolds with chaotic curvature, the Riemannian distance function has contact order $4$ at the origin. 
	
	In this paper, we consider a local variant of Nikodym sets on manifolds, associated with the Carleson--Sj\"olin operator $T_{\lambda}^{\mathcal M}$. Any such local Nikodym set is a Nikodym set in the sense of Definition \ref{Def 8.1}.\begin{definition}[Local Nikodym sets on manifolds]
		Given $\varepsilon_g\in(0,1)$, let $\Omega\subset B^n_{\varepsilon_{g}}(0)$ be a Borel set and $0<\lambda<1$. Define
		\begin{multline}
			\Omega_{\lambda}^{\star}
			\,=\,
			\bigl\{
			y \in  {B}_{{\varepsilon_{g}}/10}^{n}(\mathbf{y}_{0})
			:\,
			\exists\,x\in {B}_{\varepsilon_{g}/10}^{n}(\mathbf{x}_{0})\,\text{such that the  geodesic segment }\gamma_{x,y}  \\\text{ passing through }x\text{ and }y 
			\text{ satisfies }\,| \gamma_{x,y} \cap \Omega |
			\,\geq\,\lambda
			| \gamma_{x,y} |
			\bigr\}.
		\end{multline}
		We say that $\Omega$ is a \textbf{local Nikodym set} associated with $(g,\varepsilon_g)$ if, for all $\lambda$ sufficiently close to $1$, the set $\Omega_\lambda^\star$ has positive measure.
	\end{definition}
	Applying Theorem \ref{thm Kakeya}, we now prove that, for generic metrics on a $3$-dimensional manifold, every local Nikodym set has Hausdorff dimension at least $\frac{15}{7}$; we expect analogous results in higher dimensions.
	
	\begin{theorem}\label{thm Manifold}
		For any three-dimensional smooth manifold $M$, there is an open and dense set $\mathcal G$ of Riemannian metrics on $M$ such that, for every $g\in\mathcal G$, there exists $\varepsilon_{g}\in(0,1)$ such that every local Nikodym set associated with $(g,\varepsilon_g)$ on $(M,g)$ has Hausdorff dimension at least $\frac{15}{7}$.
	\end{theorem}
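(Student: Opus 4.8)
The plan is to transfer the genericity result of Section~\ref{generic} from the space of H\"ormander phases to the space of Riemannian metrics, and then to apply Theorem~\ref{thm Kakeya} with $n=3$ and contact order $l=A(3)=4$, for which the stated bound is $\tfrac{3+1}{2}+\tfrac{3-1}{2(4+2-3)(3-1)+2}=2+\tfrac{1}{7}=\tfrac{15}{7}$. First I would record the standard dictionary (see Section~\ref{Application} and \cite{MS,Sogge-Xi,DGGZ24}): in local normal coordinates, away from the diagonal and the cut locus, a suitable two-parameter restriction of the distance function $d_g(\mathbf x,\mathbf y)$ is a positive-definite H\"ormander phase $\phi_g$ on $\mathbb R^3$, and the geodesic Nikodym sets of $(M,g)$ coincide locally with the curved Kakeya sets associated with $\phi_g$; in particular the standard reduction recasts the Nikodym problem on $(M,g)$ as a curved maximal Kakeya estimate for $\phi_g$. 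I would also recall, from \cite{Sogge99,DGGZ24}, that three-dimensional analytic manifolds with chaotic curvature have distance function of contact order exactly $4$ at the origin. Hence, for each $p\in M$, the contact-order determinant $F_g(p):=\det\mathcal D_{g}(p)$ attached to $\phi_g$ is a fixed universal real-analytic function of a finite jet $j^{N}_p g$ of the metric, and is not identically zero as a function of that jet.

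I would then let $\mathcal G$ be the set of metrics $g$ on $M$ for which $0$ is a regular value of $F_g\colon M\to\mathbb R$; for such $g$ the bad locus $B_g:=\{p\in M:F_g(p)=0\}$ is a closed submanifold of codimension at least one in $M$, hence Lebesgue-null. Openness of $\mathcal G$ follows from the continuous dependence of $F_g$ and $dF_g$ on $g$, which factor through a finite jet of $g$, together with compactness of $M$ (or properness in the strong $C^\infty$ topology when $M$ is noncompact): a metric $C^\infty$-close to some $g\in\mathcal G$ cannot have a point at which $F$ and $dF$ vanish simultaneously, since a limiting argument would force such a point for $g$ itself. For density I would run a Thom-transversality argument: the simultaneous vanishing of $F_g$ and $dF_g$ is cut out, in the relevant jet bundle of metrics, by a proper real-analytic condition---proper precisely because $F$ is not identically zero---of codimension exceeding $\dim M$, so a generic jet section avoids it entirely; concretely, given $g_0$ and $\varepsilon>0$ one adds to $g_0$ bump functions carrying prescribed finite Taylor data to produce $\tilde g$ with $\tilde g$ arbitrarily $C^\infty$-close to $g_0$ and $0$ a regular value of $F_{\tilde g}$. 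I expect this to be the main obstacle: one must verify that metric perturbations realize enough perturbations of the contact-order jet---which they do, since the finite Taylor coefficients of $g$ at a point may be prescribed freely and $\det\mathcal D$ depends on them non-trivially by the first paragraph---and then upgrade ``non-degenerate contact order at one point'' to ``non-degenerate contact order at almost every point'' through the codimension count.

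Finally, I would fix $g\in\mathcal G$ and a Nikodym set $\Omega\subset M$, so that $\Omega^\star_\lambda$ has positive measure for all $\lambda$ sufficiently close to $1$. Since $B_g$ is Lebesgue-null, almost every point of $\Omega^\star_\lambda$ is at once a density point of $\Omega^\star_\lambda$ and a point of $M\setminus B_g$; fix one such point $x_0$. Passing to normal coordinates centered at $x_0$, the phase $\phi_g$ has contact order at most $4$ at the origin, and---exactly as in the proof of Theorem~\ref{thm PWA}, since full row rank of $\mathcal D$ is stable under perturbation---contact order at most $4$ persists on a full neighborhood of directions. The geodesics witnessing $x_0\in\overline{\Omega^\star_\lambda}$ then produce, near $x_0$, a curved Kakeya configuration for a phase satisfying $(H_1)$ and $(H_2)$ with contact order at most $4$, so the curved maximal Kakeya estimate underlying Theorem~\ref{thm Kakeya} with $n=3$ and $l=4$ yields $\dim_{\mathcal H}\Omega\ge\tfrac{15}{7}$, completing the argument.
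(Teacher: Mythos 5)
Your skeleton is the paper's: identify Nikodym sets with curved Kakeya sets for the distance phase $d_g((x,t),(y,0))$, check $(H_1)$ and $(H_2^+)$, apply Theorem~\ref{thm Kakeya} with $n=3$ and $l=A(3)=4$ to get $2+\tfrac17=\tfrac{15}{7}$, and obtain genericity by viewing the contact-order condition as a nontrivial condition on a finite jet of $g$, with non-triviality witnessed by the chaotic-curvature examples of \cite{Sogge99,DGGZ24}. The paper implements the genericity step more modestly: it works throughout inside the fixed chart of Section~\ref{Application}, uses the explicit characterization \eqref{eq metric} of contact order $4$ at the origin from \cite{DGGZ24}, and takes $\mathcal G$ to be the metrics whose (constrained) $3$-jet at the origin avoids the zero locus of that polynomial; openness and density then follow from continuity and from the fact that the polynomial does not vanish identically on the constraint variety.

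Your additional global machinery is where the gaps lie. First, the contact-order determinant is not a function of the point $p$ alone: the matrix $\mathcal D$ is attached to a configuration $(x_0,t_0;y_0)$, i.e.\ to a point together with a geodesic meeting the reference hypersurface (equivalently, a direction), so ``$0$ is a regular value of $F_g\colon M\to\R$'' is not well-posed as written; one must work on the sphere bundle or fix the configuration as the paper does in its adapted coordinates. Relatedly, your density argument needs the locus $\{F=0,\ dF=0\}$ to have codimension exceeding $\dim M=3$ in the jet bundle; ``proper because $F\not\equiv 0$'' only gives codimension one, and the independence of the three extra derivative conditions is asserted, not checked. Second, and more seriously, the final localization fails as stated: the contact-order hypothesis is needed where the tubes live, i.e.\ along the portions of the geodesics carrying the $\Omega$-mass, not at the chosen density point $x_0\in\Omega^\star_\lambda$. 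For $x$ near $x_0$ one only knows $|\gamma_x\cap\Omega|\ge\lambda|\gamma_x|$ for a unit-length geodesic, and that mass may lie entirely outside the $\varepsilon_0$-ball around $x_0$ (a priori it could even cluster near your bad locus $B_g$), so ``a curved Kakeya configuration near $x_0$'' need not contain any of $\Omega$. Repairing this requires an extra pigeonholing (split each $\gamma_x$ into $\varepsilon_0$-segments, find a common good ball receiving a $\gtrsim\varepsilon_0$ portion of $\gamma_x\cap\Omega$ for a positive measure of parameters, and rule out concentration near $B_g$), together with re-choosing the reference hypersurface inside that ball; none of this is in your sketch. The paper avoids the issue entirely by posing the Nikodym problem locally in the chart where \eqref{eq metric} is imposed at the origin.
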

	
	\begin{proof}
		Local Nikodym sets on $(M,g)$ can be identified with curved Kakeya sets whose phase is the Riemannian distance, namely
		{
			\begin{equation}\label{eq distance}
				\phi(x,t;y)=d_g\bigl((x,t),(y,\varepsilon_{g})\bigr).    
			\end{equation}
		}
		One checks that $\phi$ satisfies $(H_1)$ and $(H_2^+)$. Hence by Theorem \ref{thm Kakeya}, if $d_g$ has contact order 4 at the origin then there exists a constant $\varepsilon_{g}$ such that local Nikodym sets have Hausdorff dimension at least $\tfrac{15}{7}$.
		
		For Riemannian metrics, contact order 4 at the origin is characterized by the nonvanishing condition
		\begin{equation}\label{eq metric}
			(g_{33,11}-g_{33,22})\,g_{33,123}
			- (g_{33,113}-g_{33,223})\,g_{33,12}
			\neq 0
		\end{equation}
		at the origin, given that $\varepsilon_g$ is sufficiently small; see \cite{DGGZ24}.
		
		The third jets of the metric at the origin form an algebraic variety in $\R^N$ for some integer $N$, constrained by polynomial relations stemming from curvature identities.
		Interpreting the left side of \eqref{eq metric} as a polynomial $\tilde P$ on this jet space, its zero locus inside the constraint variety has strictly smaller dimension, since $\tilde P$ does not vanish identically there. Therefore, the complement is open and dense in the space of Riemannian metrics. A $C^\infty$‑small perturbation of $g$ moves its jet into the set $\{\tilde P\neq 0\}$, ensuring contact order 4. Applying Theorem \ref{thm Kakeya} to $(M,\tilde g)$ completes the proof.
	\end{proof}

	{ To analyze $T_\lambda^\mathcal M$ for $a(\textbf{x};\textbf{y})$ supporting in ${B}_{\varepsilon_{g}/10}^{n}(\mathbf{x}_{0})\times{B}_{\varepsilon_{g}/10}^{n}(\textbf{y}_{0})$}, a standard reduction (cf.\ \cite{CS}, \cite{DGGZ24}) shows it suffices to study H\"ormander‐type oscillatory integrals with phase given by \eqref{eq distance}.  In particular, fix a hypersurface
	\[
	\mathcal M'
	\;=\;
	\{(\,y,\varepsilon_{g}\,): y\in B_{\varepsilon_{g}/10}^{n-1}(0)\}.
	\]
	Let $a(x,t;y)$ be a smooth function supported in $ B_{\varepsilon_{g}}^{n-1}(0)\times B_{\varepsilon_{g}}^{1}(0)\times B_{\varepsilon_{g}}^{n-1}(0)$ with 
	\[
	\mathrm{dist}\bigl(\mathrm{supp}_{(x,t)}\,a,\;\mathcal M'\bigr)\;>\;0.
	\]
	Then one studies
	\begin{equation}\label{eq Rcs}
		T_\lambda^\phi f(x,t)
		\;=\;
		\int_{\mathcal M'} e^{i\lambda\,\phi(x,t;y)}\,a(x,t)\,f(y)\,dy.
	\end{equation}
	The operator $T_\lambda^\phi$ can be regarded as a variable‐coefficient analog of the Fourier extension operator for the sphere. In \cite{gao2024refined}, Gao, Miao, and the third author employ multi‐scale wave‐packet techniques to establish sharp weighted $L^p$ estimates for $T_\lambda^\phi$, and consequently for $T_\lambda^\mathcal M$, in two dimensions.
	
	From Theorem \ref{thm GHI}, estimate \eqref{eq BZ} holds in the range
	\[
	p \;\ge\;
	\begin{cases}
		2\,\frac{3n+1}{3n-3}, & \text{if $n$ is odd},\\[6pt]
		2\,\frac{3n+2}{3n-2}, & \text{if $n$ is even}.
	\end{cases}
	\]
	Moreover, if $\phi$ has finite contact order at the origin $(0,0;0)$, then Theorem \ref{thm improvement} provides a better range of $p$ for the operator $T_\lambda^\phi$, which in turn yields improved $L^p$ bounds \eqref{eq BZ} for $T_{\lambda}^{\mathcal{M}}$.

	\section{Appendix}\label{Appendix}
	\begin{prop}\label{Prop}
		The matrix $\mathcal{D}$ consists of $A(n)$ rows, where 
		\begin{equation}\label{eq An}
			A(n) \;=\; 1 + \sum_{k=1}^{n-2} \Bigl[ 
			{\textstyle\binom{n-1}{k}}\,\tfrac{k! + I(k)}{2} 
			\;+\; 
			\sum_{i=0}^{k-1} 
			\tfrac{\binom{n-1}{i}\,\binom{n-1 - i}{\,k - i\,}\,\binom{n-1 - k}{\,k - i\,}}{2}
			\Bigl( k! \;-\; (\,k - i\,)!\,\tfrac{i! - I(i)}{2} \Bigr) 
			\Bigr].
		\end{equation}
		Here, $I(k)$ denotes the number of involutions in the symmetric group $S(k)$ of degree $k$, which satisfies the recurrence relation
		\[
		I(k) \;=\; I(k-1) + (k-1)\,I(k-2), 
		\quad I(0) = I(1) = 1.
		\]
	\end{prop}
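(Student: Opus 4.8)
The plan is to compute the number of rows of $\mathcal D$ as a purely combinatorial quantity. By construction these rows correspond bijectively to $\textbf D$ together with the \emph{distinct} monomials $\prod_{\alpha=1}^{k}D_{i_\alpha,j_\alpha}$, $1\le k\le n-2$, satisfying $i_\alpha\neq i_\beta$ and $j_\alpha\neq j_\beta$ for $\alpha\neq\beta$, where two monomials are identified exactly when commutativity of the factors and the symmetry $D_{i,j}=D_{j,i}$ force them to be equal. I would first encode such a monomial as a multigraph (loops allowed) on the vertex set $\{1,\dots,n-1\}$, assigning to the factor $D_{i,j}$ the edge $\{i,j\}$, so that the monomial is exactly the resulting edge multiset. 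Reading $D_{i,j}$ as the arc $i\to j$, the index condition says precisely that this multigraph admits an orientation in which every vertex has in‑degree and out‑degree at most $1$; a short argument with vertex degrees then identifies these admissible multigraphs as the vertex‑disjoint unions of loops, simple paths with at least one edge, and cycles (a cycle of length two being a double edge). Hence the number of rows of $\mathcal D$ is $1+\sum_{k=1}^{n-2}N_k$, where $N_k$ counts the admissible $k$‑edge multigraphs on $n-1$ vertices.

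Next I would compute $N_k$ by separating the multigraphs with no path component from those with at least one. A multigraph with no path component is a disjoint union of loops, double edges and cycles on a $k$‑element subset $S\subset\{1,\dots,n-1\}$; such configurations are exactly the cycle decompositions of a permutation of $S$, and two permutations produce the same configuration iff they differ only by reversing some of their cycles of length $\ge3$. Counting these classes (for instance via Burnside's lemma, the inversion‑fixed permutations being exactly the involutions) yields $\tfrac{k!+I(k)}{2}$ per subset $S$, hence the term $\binom{n-1}{k}\tfrac{k!+I(k)}{2}$. For $I(k)$ I would record the classical recurrence $I(k)=I(k-1)+(k-1)I(k-2)$, $I(0)=I(1)=1$, according to whether $k$ is a fixed point or is transposed with one of the other $k-1$ points.

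The substance of the proof is the case of at least one path component, which I would organize by the number $i$ of degree‑$2$ vertices, so that there are $k-i\ge1$ paths and hence $0\le i\le k-1$. Choosing disjoint sets $A$ of degree‑$2$ vertices (size $i$) and $B,B'$ of path endpoints (size $k-i$ each, once an orientation of every path is fixed) accounts for the factor $\binom{n-1}{i}\binom{n-1-i}{k-i}\binom{n-1-k}{k-i}$, and the division by $2$ quotients by the simultaneous reversal of all paths, which swaps $B$ and $B'$. For fixed $(A,B,B')$ one then counts the remaining data: a subset $T\subseteq A$ carrying a cycle structure (contributing $\binom{|A|}{|T|}\tfrac{|T|!+I(|T|)}{2}$ after the reversal identifications, noting that loops and double edges admit no reversal), together with a threading of the vertices of $A\setminus T$ through the $k-i$ paths from $B$ to $B'$, which amounts to a cycle‑free bijection between two sets sharing $|A\setminus T|$ elements, for which an elementary closed count is available. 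The goal is then to show that summing these contributions over $T$ collapses, for fixed $(A,B,B')$, to $k!-(k-i)!\,\tfrac{i!-I(i)}{2}$; reassembling all the pieces produces the displayed formula for $A(n)$.

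The step I expect to be the main obstacle is precisely this last bookkeeping in the path case: identifying exactly which formal monomials coincide — the interplay of the symmetry $D_{i,j}=D_{j,i}$ with reversal of a whole path and with the (independent) reversals of cycles of length at least three, short cycles behaving differently — and then verifying that the resulting multivariate sum over $T$ telescopes to $k!-(k-i)!\tfrac{i!-I(i)}{2}$. The remaining ingredients (the degree characterization of admissible multigraphs, the involution recurrence, and the no‑path count) should be routine.
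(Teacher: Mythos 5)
The decisive step you defer---showing that, for fixed $(A,B,B')$, the sum over $T$ collapses to $k!-(k-i)!\,\tfrac{i!-I(i)}{2}$ so that $1+\sum_k N_k$ reproduces \eqref{eq An}---is not merely unverified: it cannot hold, because the two sides of your intended identity count different things. Test it at $n=5$ (so $n-1=4$ vertices) and $k=2$. The admissible multigraphs are: two loops ($6$), loop plus a disjoint edge ($12$), paths of length two ($12$), double edges ($6$), and two disjoint edges ($3$), so $N_2=39$; but the $k=2$ bracket in \eqref{eq An} equals $\binom{4}{2}\tfrac{2!+I(2)}{2}+\tfrac{\binom{4}{0}\binom{4}{2}\binom{2}{2}}{2}\cdot 2+\tfrac{\binom{4}{1}\binom{3}{1}\binom{2}{1}}{2}\cdot 2=12+6+24=42$. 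The mismatch comes exactly from multigraphs with at least two path components: in your parametrization, reversing a \emph{single} path changes the unordered pair $\{B,B'\}$ but not the multigraph (for instance $D_{12}D_{34}$ arises both from $\{B,B'\}=\{\{1,3\},\{2,4\}\}$ and from $\{\{1,4\},\{2,3\}\}$), so quotienting only by the simultaneous reversal of all paths still counts a multigraph with $p$ path components $2^{p-1}$ times. Hence either you keep $\{B,B'\}$ as part of the data, in which case your total is $42$ and is no longer $N_k$, contradicting your opening identification of the rows of $\mathcal D$ with admissible multigraphs; or you impose the missing identifications and get $39$, which is not the bracketed quantity in \eqref{eq An}.

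This is also where your route genuinely departs from the paper's argument. The paper never counts monomials globally: it fixes an unordered pair of index sets $\{R,C\}$ of size $k$ (a $k\times k$ minor, up to transpose), stratifies by $i=|R\cap C|$, counts the distinct terms \emph{inside that single minor} via the involution counts $q(k)=\tfrac{k!+I(k)}{2}$ (case $R=C$) and $p(k,i)=k!-(k-i)!\,\tfrac{i!-I(i)}{2}$, and then sums over minors; the formula \eqref{eq An} is precisely this minor-by-minor tally, in which a product such as $D_{12}D_{34}$, which occurs in two different minors, is counted once for each. So if your goal is to prove the displayed formula, you must adopt that per-minor bookkeeping (equivalently, treat $\{B,B'\}=\{R\setminus C,\,C\setminus R\}$ as part of the data and never identify across different choices of it). If instead you insist on counting distinct rows of $\mathcal D$---the literal reading of ``remove all duplicates''---your computation will come out strictly below $A(n)$ for every $n\ge 5$; that is a discrepancy between the definition of $\mathcal D$ and \eqref{eq An} worth flagging explicitly, not something the hoped-for telescoping can repair.
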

	\begin{proof}
		
		To determine the number of rows in the matrix $\mathcal{D}$, it suffices to count the number of distinct terms generated by determinants of all $k$-th order minors of the symmetric matrix $D$, for $1 \le k \le n-2$. We begin by classifying the minors according to their order $k$. Denote the number of distinct terms generated by determinants of all $k$-th order minors as $B(n,k)$. Then
		\[
		A(n) \;=\; 1 + \sum_{k=1}^{n-2} B(n,k).
		\]
		
		Next, we further classify the $k$-th order minors based on the order of their symmetric sub-minors. The number of fully symmetric $k$-th order minors is 
		$\binom{n-1}{k}.$
		
		For any integer $0 \le i \le k-1$, define the set $\mathbf{M}(n,k,i)$ as:
		\begin{align*}
			\mathbf{M}(n, k, i) = \big\{ & \text{$k$-th order minors in the symmetric matrix $D$} \\
			& \text{that contain \textbf{exactly} an $i$-th order symmetric sub-minor} \big\}.
		\end{align*}
		Denote the cardinality of $\mathbf{M}(n,k,i)$ by $\#\mathbf{M}(n,k,i)$. One checks that
		\[
		\#\mathbf{M}(n,k,i)
		\;=\;
		\frac{\binom{n-1}{i}\,\binom{n-1 - i}{\,k - i\,}\,\binom{n-1 - k}{\,k - i\,}}{2}.
		\]
		
		Let $q(k)$ be the number of distinct terms in the determinant of a $k$-th order symmetric matrix. Also, let $p(k,i)$ be the number of distinct terms in the determinant of a $k$-th order minor that contains \emph{exactly} an $i$-th order symmetric sub-minor.
		
		\begin{lemma}
			The number of distinct terms in the determinant of a $k$-th order symmetric matrix satisfies
			\[
			q(k) \;=\; \frac{k! + I(k)}{2}.
			\]
			Here, $I(k)$ represents the number of \emph{involutions} in the symmetric group $S(k)$ of order $k$, i.e., the number of elements in $S(k)$ that are their own inverses.
		\end{lemma}
		\begin{proof}
			The determinant of a $k$-th order matrix has $k!$ terms (counting multiplicities), and each term corresponds to an element of $S(k)$.  If $\sigma \in S(k)$ is an involution (i.e.\ $\sigma = \sigma^{-1}$), it contributes one term; if $\sigma$ is not an involution, then $\sigma$ and $\sigma^{-1}$ contribute two terms. However, a couple of non-involutions contributes the same term in a symmetric matrix. Hence the involutions give $I(k)$ distinct terms, and the non-involutions give 
			\[
			\frac{k! - I(k)}{2}
			\]
			distinct terms.  Therefore,
			\[
			q(k) \;=\; I(k) + \frac{k! - I(k)}{2} \;=\; \frac{k! + I(k)}{2},
			\]
			with $I(k)$ satisfying
			\[
			I(k) \;=\; I(k-1) + (k-1)\,I(k-2), 
			\quad I(0) = I(1) = 1.
			\]
		\end{proof}
		
		\begin{lemma}
			The number of distinct terms $p(k,i)$ in the determinant of a $k$-th order minor that contains \emph{exactly} an $i$-th order symmetric submatrix is
			\[
			p(k,i) \;=\; k! \;-\; (\,k - i\,)!\,\frac{i! - I(i)}{2}.
			\]
			Here, $I(i)$ denotes the number of involutions in $S(i)$ of order $i$.
		\end{lemma}
		\begin{proof}
			The determinant of any $k$-th order matrix has $k!$ terms.  In its $i$-th order symmetric submatrix, there are 
			\[
			\frac{i! - I(i)}{2}
			\]
			duplicate terms.  Each duplicate extends in $(k - i)!$ ways to a term of the full $k$-th order determinant.  Thus,
			\[
			p(k,i) \;=\; k! \;-\; (\,k - i\,)!\,\frac{i! - I(i)}{2}.
			\]
		\end{proof}
		
		Based on this classification, substitute the counts into $B(n,k)$:
		\[
		B(n,k)
		\;=\;
		\binom{n-1}{k}\,q(k)
		\;+\;
		\sum_{i=0}^{k-1} \#\mathbf{M}(n,k,i)\,p(k,i).
		\]
		Hence
		\[
		A(n) 
		\;=\; 
		1 + \sum_{k=1}^{n-2} B(n,k),
		\]
		which yields the stated formula.
	\end{proof}

	\bibliography{reference}

\newcommand{\etalchar}[1]{$^{#1}$}
\begin{thebibliography}{CGG{\etalchar{+}}24}

\bibitem[BG11]{BG11}
Jean Bourgain and Larry Guth.
\newblock Bounds on oscillatory integral operators based on multilinear
  estimates.
\newblock {\em Geom. Funct. Anal.}, 21(6):1239--1295, 2011.

\bibitem[BMV23]{BMV}
Stefan Buschenhenke, Detlef M{\"u}ller, and Ana Vargas.
\newblock Fourier restriction for smooth hyperbolic 2-surfaces.
\newblock {\em Mathematische Annalen}, 387(1-2):17--56, 2023.

\bibitem[Bou91a]{Bourgain91.2}
J.~Bourgain.
\newblock Besicovitch type maximal operators and applications to {F}ourier
  analysis.
\newblock {\em Geom. Funct. Anal.}, 1(2):147--187, 1991.

\bibitem[Bou91b]{Bourgain91}
J.~Bourgain.
\newblock {$L^p$}-estimates for oscillatory integrals in several variables.
\newblock {\em Geom. Funct. Anal.}, 1(4):321--374, 1991.

\bibitem[Bou99]{Bourgain99}
J.~Bourgain.
\newblock On the dimension of {K}akeya sets and related maximal inequalities.
\newblock {\em Geom. Funct. Anal.}, 9(2):256--282, 1999.

\bibitem[CGG{\etalchar{+}}24]{CGGHIW24}
Mingfeng Chen, Shengwen Gan, Shaoming Guo, Jonathan Hickman, Marina Iliopoulou,
  and James Wright.
\newblock Oscillatory integral operators and variable {S}chr\"{o}dinger
  propagators: beyond the universal estimates.
\newblock {\em arXiv preprint arXiv:2407.06980}, 2024.

\bibitem[Cor77]{Cor77}
Antonio Cordoba.
\newblock The {K}akeya maximal function and the spherical summation
  multipliers.
\newblock {\em Amer. J. Math.}, 99(1):1--22, 1977.

\bibitem[CS72]{CS}
Lennart Carleson and Per Sj\"olin.
\newblock Oscillatory integrals and a multiplier problem for the disc.
\newblock {\em Studia Math.}, 44:287--299. (errata insert), 1972.

\bibitem[Dav71]{D}
Roy~O. Davies.
\newblock Some remarks on the {K}akeya problem.
\newblock {\em Proc. Cambridge Philos. Soc.}, 69:417--421, 1971.

\bibitem[DGGZ24]{DGGZ24}
Song Dai, Liuwei Gong, Shaoming Guo, and Ruixiang Zhang.
\newblock Oscillatory integral operators on manifolds and related {K}akeya and
  {N}ikodym problems.
\newblock {\em Camb. J. Math.}, 12(4), 2024.

\bibitem[Dru83]{Drury}
S.~W. Drury.
\newblock {$L\sp{p}$}\ estimates for the {X}-ray transform.
\newblock {\em Illinois J. Math.}, 27(1):125--129, 1983.

\bibitem[GHI19]{GHI}
Larry Guth, Jonathan Hickman, and Marina Iliopoulou.
\newblock Sharp estimates for oscillatory integral operators via polynomial
  partitioning.
\newblock {\em Acta Math.}, 223(2):251--376, 2019.

\bibitem[GLX25]{GaoLX}
Chuanwei Gao, Diankun Liu, and Yakun Xi.
\newblock Curved {K}akeya sets and {N}ikodym problems on manifolds.
\newblock {\em arXiv preprint arXiv:2503.11574}, 2025.

\bibitem[GMX24]{gao2024refined}
Chuanwei Gao, Changxing Miao, and Yakun Xi.
\newblock Refined $ {L}^{p} $ restriction estimate for eigenfunctions on
  riemannian surfaces.
\newblock {\em arXiv preprint arXiv:2411.01577}, 2024.

\bibitem[GO24]{Guo-Oh}
Shaoming Guo and Changkeun Oh.
\newblock A restriction estimate for surfaces with negative {G}aussian
  curvatures.
\newblock {\em Peking Math. J.}, 7(1):155--202, 2024.

\bibitem[Gut18]{Guth18}
Larry Guth.
\newblock Restriction estimates using polynomial partitioning {II}.
\newblock {\em Acta Math.}, 221(1):81--142, 2018.

\bibitem[GWZ24]{GWZ22}
Shaoming Guo, Hong Wang, and Ruixiang Zhang.
\newblock A dichotomy for {H}\"ormander-type oscillatory integral operators.
\newblock {\em Invent. Math.}, 238(2):503--584, 2024.

\bibitem[GZ18]{Guth-Zahl}
Larry Guth and Joshua Zahl.
\newblock Polynomial {W}olff axioms and {K}akeya-type estimates in {$\Bbb
  R^4$}.
\newblock {\em Proc. Lond. Math. Soc. (3)}, 117(1):192--220, 2018.

\bibitem[H{\"o}r73]{Hor73}
Lars H{\"o}rmander.
\newblock Oscillatory integrals and multipliers on {$FL\sp{p}$}.
\newblock {\em Ark. Mat.}, 11:1--11, 1973.

\bibitem[HR19]{HR}
Jonathan Hickman and Keith~M. Rogers.
\newblock Improved {F}ourier restriction estimates in higher dimensions.
\newblock {\em Camb. J. Math.}, 7(3):219--282, 2019.

\bibitem[HRZ22]{HRZ}
Jonathan Hickman, Keith~M. Rogers, and Ruixiang Zhang.
\newblock Improved bounds for the {K}akeya maximal conjecture in higher
  dimensions.
\newblock {\em Amer. J. Math.}, 144(6):1511--1560, 2022.

\bibitem[ILX22]{IosevichLiuXi2022}
A.~Iosevich, B.~Liu, and Y.~Xi.
\newblock Microlocal decoupling inequalities and the distance problem on
  riemannian manifolds.
\newblock {\em American Journal of Mathematics}, 144(6):1601--1639, 2022.

\bibitem[K{\L}T00]{KLT}
Nets~Hawk Katz, Izabella {\L}aba, and Terence Tao.
\newblock An improved bound on the {M}inkowski dimension of {B}esicovitch sets
  in {${\bf R}^3$}.
\newblock {\em Ann. of Math. (2)}, 152(2):383--446, 2000.

\bibitem[KR18]{KatzR}
Nets~Hawk Katz and Keith~M. Rogers.
\newblock On the polynomial {W}olff axioms.
\newblock {\em Geom. Funct. Anal.}, 28(6):1706--1716, 2018.

\bibitem[KT02]{Katz-Tao}
Nets~Hawk Katz and Terence Tao.
\newblock New bounds for {K}akeya problems.
\newblock {\em J. Anal. Math.}, 87:231--263, 2002.
\newblock Dedicated to the memory of Thomas H.\ Wolff.

\bibitem[KZ19]{Katz19}
Nets~Hawk Katz and Joshua Zahl.
\newblock An improved bound on the {H}ausdorff dimension of {B}esicovitch sets
  in {$\Bbb{R}^3$}.
\newblock {\em J. Amer. Math. Soc.}, 32(1):195--259, 2019.

\bibitem[KZ21]{katz21}
Nets~Hawk Katz and Joshua Zahl.
\newblock A {K}akeya maximal function estimate in four dimensions using
  planebrushes.
\newblock {\em Rev. Mat. Iberoam.}, 37(1):317--359, 2021.

\bibitem[LaT01]{LT}
I.~\L~aba and T.~Tao.
\newblock An improved bound for the {M}inkowski dimension of {B}esicovitch sets
  in medium dimension.
\newblock {\em Geom. Funct. Anal.}, 11(4):773--806, 2001.

\bibitem[Mat15]{Mattila}
Pertti Mattila.
\newblock {\em Fourier analysis and {H}ausdorff dimension}, volume 150 of {\em
  Cambridge Studies in Advanced Mathematics}.
\newblock Cambridge University Press, Cambridge, 2015.

\bibitem[MS97]{MS}
William~P. Minicozzi, II and Christopher~D. Sogge.
\newblock Negative results for {N}ikodym maximal functions and related
  oscillatory integrals in curved space.
\newblock {\em Math. Res. Lett.}, 4(2-3):221--237, 1997.

\bibitem[Nad25]{Arian}
Arian Nadjimzadah.
\newblock New curved {K}akeya estimates.
\newblock {\em arXiv preprint arXiv:2503.15760}, 2025.

\bibitem[Sch98]{Schlag}
W.~Schlag.
\newblock A geometric inequality with applications to the {K}akeya problem in
  three dimensions.
\newblock {\em Geom. Funct. Anal.}, 8(3):606--625, 1998.

\bibitem[Sog99]{Sogge99}
Christopher~D. Sogge.
\newblock Concerning {N}ikodym-type sets in {$3$}-dimensional curved spaces.
\newblock {\em J. Amer. Math. Soc.}, 12(1):1--31, 1999.

\bibitem[Sog17]{sogge2017fourier}
Christopher~D Sogge.
\newblock {\em Fourier integrals in classical analysis}, volume 210.
\newblock Cambridge University Press, 2017.

\bibitem[SXX18]{Sogge-Xi}
Christopher~D. Sogge, Yakun Xi, and Hang Xu.
\newblock On instability of the {N}ikodym maximal function bounds over
  {R}iemannian manifolds.
\newblock {\em J. Geom. Anal.}, 28(3):2886--2901, 2018.

\bibitem[Wis05]{Wisewell05}
L.~Wisewell.
\newblock Kakeya sets of curves.
\newblock {\em Geom. Funct. Anal.}, 15(6):1319--1362, 2005.

\bibitem[Wol95]{Wolff95}
Thomas Wolff.
\newblock An improved bound for {K}akeya type maximal functions.
\newblock {\em Rev. Mat. Iberoamericana}, 11(3):651--674, 1995.

\bibitem[WW24]{WangWu2024}
H.~Wang and S.~Wu.
\newblock Restriction estimates using decoupling theorems and two-ends
  furstenberg inequalities.
\newblock arXiv:2411.08871, 2024.

\bibitem[WZ22]{sticky}
Hong Wang and Joshua Zahl.
\newblock Sticky kakeya sets and the sticky kakeya conjecture.
\newblock {\em arXiv preprint arXiv:2210.09581}, 2022.

\bibitem[WZ24]{assouad}
Hong Wang and Joshua Zahl.
\newblock The assouad dimension of kakeya sets in $\mathbb{R}^{3}$.
\newblock {\em arXiv preprint arXiv:2401.12337}, 2024.

\bibitem[WZ25]{WangZahl}
Hong Wang and Joshua Zahl.
\newblock Volume estimates for unions of convex sets, and the kakeya set
  conjecture in three dimensions.
\newblock {\em arXiv preprint arXiv:2502.17655}, 2025.

\bibitem[Xi17]{Xi17}
Yakun Xi.
\newblock On {K}akeya-{N}ikodym type maximal inequalities.
\newblock {\em Trans. Amer. Math. Soc.}, 369(9):6351--6372, 2017.

\bibitem[Zah18]{ZahlJoshua}
Joshua Zahl.
\newblock A discretized {S}everi-type theorem with applications to harmonic
  analysis.
\newblock {\em Geom. Funct. Anal.}, 28(4):1131--1181, 2018.

\bibitem[Zah21]{Zahl}
Joshua Zahl.
\newblock New {K}akeya estimates using {G}romov's algebraic lemma.
\newblock {\em Adv. Math.}, 380:Paper No. 107596, 42, 2021.

\end{thebibliography}
	\bibliographystyle{alpha}

\end{document}